\tikzstyle{vertex}=[circle, draw, inner sep=1pt, minimum size=3pt]
\tikzstyle{filledvertex}=[circle, draw, fill, inner sep=1pt, minimum size=3pt]
\newtheorem{theorem}{Theorem}
\newtheorem{lemma}{Lemma}
\newtheorem{case}{Case}
\newtheorem{subcase}{Case}[case]
\newtheorem{subsubcase}{Case}[subcase]
\newtheorem{claim}{Claim}
\newtheorem{subclaim}{Claim}[claim]
\tikzstyle{vertex}=[circle, draw, inner sep=2pt, minimum size=6pt]
\tikzstyle{filledvertex}=[circle, draw, fill, inner sep=2pt, minimum size=6pt]
\tikzstyle{directed}=[postaction={decorate,
\begin{document}
\begin{spacing}{1.30}

\title{On graphs without cycles of length 1 modulo 3
\thanks{Supported in part by National Natural Science Foundation of China (Grant Nos. 12311540140, 12242111, 12131013, 12171393, 11601430), Guangdong Basic \& Applied Basic Research Foundation (Grant Nos. 2023A1515030208, 2022A1515010899), Shaanxi Fundamental Science Research Project for Mathematics and Physics (Grant No. 22JSZ009).}}

\author[$a$,$b$,$c$]{Yandong Bai
\thanks{E-mail: bai@nwpu.edu.cn.}}
\author[$a$,$b$]{Binlong Li
\thanks{E-mail: binlongli@nwpu.edu.cn.}}
\author[$a$,$b$]{Yufeng Pan
\thanks{E-mail: yf.pan@mail.nwpu.edu.cn.}}
\author[$a$,$b$]{Shenggui Zhang
\thanks{E-mail: sgzhang@nwpu.edu.cn.}}

\affil[$a$]{\small School of Mathematics and Statistics, 
Northwestern Polytechnical University,
Xi'an, Shaanxi 710129, China}
\affil[$b$]{\small Xi’an-Budapest Joint Research Center for Combinatorics,
Northwestern Polytechnical University,
Xi'an, Shaanxi 710129, China}
\affil[$c$]{\small Research $\&$ Development Institute of 
Northwestern Polytechnical University in Shenzhen,
Shenzhen, Guangdong 518057, China}

\date{}
\maketitle

\begin{abstract}
Burr and Erd\H{o}s conjectured in 1976 that 
for every two integers $k>\ell\geqslant 0$ 
satisfying that $k\mathbb{Z}+\ell$ contains an even integer, 
an $n$-vertex graph containing no cycles of length $\ell$ modulo $k$ 
can contain at most a linear number of edges on $n$. 
Bollob\'{a}s confirmed this conjecture in 1977 and then Erd\H{o}s proposed the problem of determining the exact value of the maximum number of edges in such a graph.
For the above $k$ and $\ell$,
define $c_{\ell,k}$ to be the least constant such that
every $n$-vertex graph with at least $c_{\ell,k}\cdot n$ edges contains a cycle of length $\ell$ modulo $k$. 
The precise (or asymptotic) values of $c_{\ell,k}$ are known for very few pairs $\ell$ and $k$. 
In this paper,
we precisely determine the maximum number of edges in a graph containing no cycles of length 1 modulo 3.
In particular,
we show that every $n$-vertex graph with at least $\frac{5}{3}(n-1)$ edges contains a cycle of length 1 modulo 3, unless $9|(n-1)$ and each block of the graph is a Petersen graph.
As a corollary, 
we obtain that $c_{1,3}=\frac{5}{3}$.
This is the last remaining class modulo $k$ for $1\leqslant k\leqslant 4$.
\end{abstract}

\section{Introduction}

In this paper,
we only consider simple graphs, i.e., without loops or multiple edges.
For terminology and notations not defined here,
we refer the reader to \cite{BM08}.

For two integers $k>\ell\geqslant 0$ satisfying that
$k\mathbb{Z}+\ell$ contains an even integer,
what is the maximum number of edges in an $n$-vertex graph containing no cycles of length $\ell$ modulo $k$?
Burr and Erd\H{o}s \cite{Erd76} conjectured in 1976 that such a graph can contain at most a linear number of edges on $n$. 
Bollob\'{a}s \cite{B77} gave a positive answer to this conjecture.
Erd\H{o}s then asked what is the exact value of 
the maximum number of edges in such a graph.

Let $G$ be an $n$-vertex graph 
and denote its edge number by $e(G)$.
By an ($\ell$ mod $k$)-cycle we mean a cycle of length $\ell$ modulo $k$.
Denote by $\mathcal{C}_{\ell\bmod k}$ the set of ($\ell$ mod $k$)-cycles.
If $G$ contains no cycles, or alternatively, no (0 mod 1)-cycles,
then one can see that $e(G)\leqslant n-1$ 
and the equality holds if and only if $G$ is a tree.
A famous result states that
if $G$ contains no (0 mod 2)-cycles
then $e(G)\leqslant \frac{3}{2}(n-1)$
and, for $2|(n-1)$, 
the equality holds if and only if each block of $G$ is a triangle.
In view of the complete bipartite balanced graphs,
we get that if $G$ contains no (1 mod 2)-cycles
then $e(G)\leqslant \lfloor \frac{n^2}{4}\rfloor$.
Chen and Saito \cite{CS94} showed that 
if $G$ contains no (0 mod 3)-cycles
then $e(G)\leqslant 2(n-1)$ for large $n$
and the equality holds if and only if 
$G$ is isomorphic to $K_{2,n-2}$.
Dean et al. \cite{DKOT91}
and, independently, Saito \cite{Sai92}
showed that if $G$ contains no (2 mod 3)-cycles
then $e(G)\leqslant 3(n-3)$
and the equality holds if and only if 
$G$ is isomorphic to $K_4$ or $K_{3,n-3}$.
Recently,
Gy\H{o}ri et al. \cite{G+24} showed that 
if $G$ contains no (0 mod 4)-cycles
then $e(G)\leqslant \frac{19}{12}(n-1)$
and they also constructed a type of extremal graphs;
Gao et al. \cite{GLMX24} showed that 
if $G$ contains no (2 mod 4)-cycles
then $e(G)\leqslant \frac{5}{2}(n-1)$
and, for $4|(n-1)$, 
the equality holds if and only if 
each block of $G$ is $K_5$.


Dean et al. \cite{DKOT91},
Lu and Yu \cite{LY01} gave some conditions for the existence of (1 mod 3)-cycles.
Here we list the result of Dean et al.,
which will be used in our proof.

\begin{theorem}[Dean et al. \cite{DKOT91}]\label{thm: dean et al.}
Let $G$ be a $2$-connected graph with minimum degree at least $3$.
If $G$ contains no $(1\bmod 3)$-cycles,
then $G$ is isomorphic to the Petersen graph.
\end{theorem}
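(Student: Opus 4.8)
The plan is to verify first that the Petersen graph~$P$ meets the hypotheses and then to show it is the only graph that does. That~$P$ qualifies is immediate: it is $3$-connected and $3$-regular, and its cycle lengths are $5,6,8,9$, none of which is $\equiv 1\pmod 3$. For the converse, the engine is the elementary remark that three internally disjoint $u$--$v$ paths of lengths $a,b,c$ produce cycles of lengths $a+b$, $b+c$, $c+a$; insisting that none of these is $\equiv 1\pmod 3$ forces the multiset $\{a,b,c\}\bmod 3$ to be one of $\{0,0,0\}$, $\{0,0,2\}$, $\{1,1,1\}$, $\{1,1,2\}$. Hence, in any family of at least three internally disjoint $u$--$v$ paths, at most one can have length $\equiv 2\pmod 3$, and all the rest must share a single residue, $0$ or $1$. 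A companion fact used throughout is that $C_4,C_7,C_{10},\dots$ are forbidden, so the girth of $G$ is $3$ or at least $5$.

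\emph{Step 1 (girth exactly $5$).} If $xyz$ were a triangle, then $\delta(G)\ge 3$ supplies neighbours $x',y',z'$ outside the triangle; a coincidence among them, an edge among them, or a short $2$-connected routing between them creates a $C_4$, and the remaining configurations are killed by applying the congruence remark to the theta-subgraphs they produce. So $G$ has girth $\ge 5$. To exclude girth $g\ge 6$: since $g\not\equiv 1\pmod 3$ we have $g\equiv 0$ or $2$; taking a shortest cycle $C$, the condition $\delta(G)\ge3$ and the absence of chords force an ear $P$ of length $p$ whose endpoints split $C$ into arcs $a,b$, and minimality of $C$ gives $p\ge\max(a,b)$. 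Feeding $\{a,b,p\}$ and the larger subgraphs that arise when internal vertices of $P$ acquire their missing edges into the congruence remark yields a cycle of length $\equiv 1\pmod 3$. Hence $G$ has girth $5$.

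\emph{Step 2 (order and identification).} From girth $5$ and $\delta(G)\ge 3$, counting vertices within distance $2$ of a fixed vertex gives $|V(G)|\ge 10$, with equality forcing $G$ to be $3$-regular and therefore isomorphic to the Petersen graph (the unique $(3,5)$-cage). It thus suffices to prove $|V(G)|\le 10$. I would first bound $\Delta(G)$: a vertex of degree $\ge 4$, via $2$-connectedness and the fan lemma, yields enough internally disjoint paths reconverging at a common vertex to violate the residue restriction above, so $G$ is cubic. Then take a longest cycle $C$, $|C|=c$; since $G\ne C$ there is an ear $P$ of length $p\ge 1$ splitting $C$ into arcs $a,b$, and maximality of $C$ gives $p\le\min(a,b)$. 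The congruence remark pins down $\{a,b,p\}\bmod 3$, girth $5$ bounds $a,b,p$ from below, and tracking how successive ears interact—each internal vertex of each ear eventually receiving its third edge and creating further theta- and longer subgraphs—forces $c=9$ and then $|V(G)|=10$, whence $G\cong P$.

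\emph{Main obstacle.} The crux is Step 2: controlling all possible bridge and attachment patterns on a longest cycle of a cubic girth-$5$ graph and checking that every configuration other than the Petersen one produces a cycle of length $\equiv 1\pmod 3$. The congruence remark handles the arithmetic bookkeeping, but organising the cases—by the number of bridges, their placement around $C$, and the residues of the arcs they cut off—is where the real work lies; the exclusion of girth $\ge 6$ in Step~1 is a smaller instance of the same difficulty.
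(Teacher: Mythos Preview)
The paper does not prove this theorem at all: Theorem~\ref{thm: dean et al.} is quoted from Dean, Kaneko, Ota and Toft \cite{DKOT91} and used as a black box (in Claim~\ref{claim: 2-con} of Section~\ref{section: proof of main thm} to force $\delta(G)=2$, and in the proof of Theorem~\ref{thm: petersen-blocks}). There is therefore no ``paper's own proof'' to compare your attempt against.

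As for your sketch on its own merits: the congruence remark on three internally disjoint paths is correct and is indeed the basic arithmetic lever, and your Moore-bound observation in Step~2 (girth $5$ and $\delta\ge 3$ give $|V(G)|\ge 10$, equality forcing the $(3,5)$-cage) is sound. But several of the structural steps are only gestures. In Step~1, the triangle case handles coincidences and adjacencies among $x',y',z'$ correctly (each gives a $C_4$), but ``the remaining configurations are killed by the congruence remark'' hides a genuine case analysis: with $x',y',z'$ distinct and pairwise non-adjacent you have no theta yet, and you must route through $2$-connectivity to manufacture one, then check residues. The exclusion of girth $\ge 6$ is similarly underspecified: you assert that feeding $\{a,b,p\}$ and ``the larger subgraphs'' into the remark yields a forbidden cycle, but for $g\equiv 0\pmod 3$ the triple $\{a,b,p\}$ can easily be $\{0,0,0\}$ and survive, so you really do need the secondary ears, and you have not shown how that goes. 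In Step~2, bounding $\Delta(G)\le 3$ via ``the fan lemma yields enough internally disjoint paths'' is optimistic: a vertex of degree $4$ in a $2$-connected graph does not automatically sit on four internally disjoint paths to a single other vertex, so the residue constraint does not apply directly. Finally, you yourself flag the bridge/attachment analysis on a longest cycle as the main obstacle, and it is: this is where the real content of \cite{DKOT91} lies, and your outline does not indicate how the cases are organised or closed.
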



In this paper,
we precisely determine the maximum number of edges 
in an $n$-vertex graph containing no cycles of length $1$ modulo $3$.
This is the main result of this paper.

\begin{theorem}\label{thm: main}
Let $G$ be an $n$-vertex graph.
If 
\[
e(G)>15q+\left\lfloor\frac{3r}{2}\right\rfloor,~
\text{where~}n-1=9q+r,~0\leqslant r\leqslant 8,
\]
then $G$ contains a $(1\bmod 3)$-cycle.
\end{theorem}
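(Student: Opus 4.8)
The plan is to argue by induction on $n$, proving along the way the sharper assertion that an $n$-vertex graph with exactly $f(n):=15q+\lfloor 3r/2\rfloor$ edges (where $n-1=9q+r$, $0\le r\le 8$) and no $(1\bmod 3)$-cycle must have all of its blocks equal to Petersen graphs or triangles, with the number of triangle blocks determined by $r$; tracking this characterization is what lets the induction feed itself, and it is what the corollary $c_{1,3}=\frac{5}{3}$ requires. Put $g(m)=f(m+1)$. The routine preliminary step is to verify two properties of $f$: it is nondecreasing, and it is superadditive, i.e.\ $g(a)+g(b)\le g(a+b)$ for all $a,b\ge 0$ (equivalently $f(n_1)+f(n_2)\le f(n_1+n_2-1)$); both reduce to a short computation with the period-$9$ sequence $1,2,1,2,1,2,1,2,3$ of successive differences of $g$. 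Granting this, if $G$ is disconnected or has a cut vertex, decompose $G$ into its blocks $B_1,\dots,B_t$: then $\sum_i(|B_i|-1)=n-1$ and $e(G)=\sum_i e(B_i)>f(n)=g(n-1)\ge\sum_i g(|B_i|-1)$, so some $B_i$ has $e(B_i)>f(|B_i|)$; a $K_2$ has $e=1=f(2)$ and cannot be this block, so $B_i$ is $2$-connected and induction finishes. Hence we may assume $G$ is $2$-connected, so $\delta(G)\ge 2$; if $G$ has a $(1\bmod 3)$-cycle we are done, so suppose it does not.

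If $\delta(G)\ge 3$, Theorem~\ref{thm: dean et al.} forces $G$ to be the Petersen graph, whence $n=10$ and $e(G)=15=f(10)$, contradicting $e(G)>f(n)$. So $G$ has a vertex of degree $2$, and $G$ is not a cycle (a cycle $C_n$ has $e(C_n)=n\le f(n)$ for $n\ge 3$); hence $G$ contains a maximal \emph{degree-$2$ ear}, namely a path $P=u_0u_1\cdots u_t$ with $t\ge 2$, $u_0\ne u_t$, $\deg u_1=\cdots=\deg u_{t-1}=2$ and $\deg u_0,\deg u_t\ge 3$. If $t\ge 3$, delete the interior of $P$: the subgraph $G'\subseteq G$ so obtained has $n-(t-1)$ vertices and $e(G)-t$ edges, and since any $t-1$ consecutive terms of the difference sequence of $g$ sum to at least $t$ (directly for $t=3,4$, then by grouping into blocks of two and three), $e(G')\ge f(n)+1-t\ge f(n-(t-1))+1$; by induction $G'$, hence $G$, has a $(1\bmod 3)$-cycle, a contradiction.

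What remains is the case that every maximal degree-$2$ ear of $G$ has length exactly $2$. Suppressing all degree-$2$ vertices then yields a $2$-connected multigraph $H$ with $\delta(H)\ge 3$ in which each edge has length $1$ or $2$ (``length'' $=$ number of edges of $G$ it represents), at least one edge has length $2$, and — since $G$ is simple with no $(1\bmod 3)$-cycle — the edge multiplicity is at most $2$, a parallel pair consisting of exactly one length-$1$ and one length-$2$ edge (i.e.\ a triangle of $G$ with a degree-$2$ apex), with no loops. The analysis rests on: (i) a cycle of $H$ using only length-$1$ edges is a cycle of $G$ of the same length, so the spanning subgraph $H_1\subseteq H$ formed by the length-$1$ edges is simple and has no $(1\bmod 3)$-cycle; and (ii) a cycle of $H$ with $a$ length-$2$ edges corresponds to a cycle of $G$ of length $a$ larger. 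Using (i), (ii), the identity $e(G)=e(H_1)+2\bigl(n-|V(H)|\bigr)$, and Theorem~\ref{thm: dean et al.} applied to a simple graph extracted from $H$, one shows that such a $G$ either already has a $(1\bmod 3)$-cycle or satisfies $e(G)\le f(n)$, and one reads off that the non-Petersen blocks in the extremal case are triangles. (In particular, a degree-$2$ vertex $v$ with two adjacent neighbours $x,y$ is treated here: $G-v$ is either handled by induction or is edge-extremal, and then, $x$ and $y$ lying in a common Petersen or triangle block, one uses that every edge of the Petersen graph and of a triangle lies on a path of length $\equiv 2\pmod 3$ joining its ends, producing a $(1\bmod 3)$-cycle through $v$.)

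I expect this last case to be the crux. A length-$2$ ear cannot be removed the way a longer one can: deleting it costs $2$ edges but only $1$ vertex, whereas $f(n)-f(n-1)$ can be as small as $1$ (exactly when $n-1\equiv 1,3,5,7\pmod 9$), so the induction hypothesis is unavailable for $G$ minus the ear; and no local rerouting of a length-$2$ ear preserves cycle lengths modulo $3$. The argument must therefore be carried out globally in the suppressed multigraph $H$, cycle by cycle, and — because $f$ jumps by $3$ at every multiple of $9$ — the equality case must be propagated through the whole induction, with the all-Petersen-block graphs emerging precisely when $9\mid(n-1)$ and triangle blocks absorbing the remainder $r$.
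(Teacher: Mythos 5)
Your preliminary reductions are sound: superadditivity of $g(m)=f(m+1)$ plus the block decomposition reduces correctly to the $2$-connected case, and the observation that any $t-1$ consecutive differences of $g$ sum to at least $t$ when $t\geqslant 3$ legitimately disposes of any maximal degree-$2$ ear of length at least~$3$. (The paper's Claim~2 proves a stronger and more uniform density bound, $\rho(U)>\tfrac{3}{2}|U|$ for every nonempty $U\subseteq V(G)$, of which your ear removal is a special case.)

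Two things go wrong from there. First, the ``sharper assertion'' you carry as the inductive hypothesis --- that every $n$-vertex graph without a $(1\bmod 3)$-cycle and with exactly $f(n)$ edges has all its blocks isomorphic to Petersen graphs or triangles --- is false unless $9\mid(n-1)$. The paper itself remarks that other extremal graphs exist. For a concrete counterexample, add to a $5$-cycle a new vertex adjacent to two consecutive cycle vertices: the result is a $2$-connected graph on $6$ vertices with $7=f(6)$ edges whose only cycle lengths are $3,5,6$, yet which is neither a Petersen graph nor a union of triangle blocks. So the parenthetical step (deleting a degree-$2$ vertex $v$ whose two neighbours $x,y$ are adjacent, then invoking the Petersen/triangle structure of $G-v$) cannot be carried out with your stated hypothesis.

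Second, and decisively, the case you correctly flag as ``the crux'' --- every maximal degree-$2$ ear has length exactly $2$ --- is left as a plan rather than a proof, and it is precisely where the real difficulty of the theorem lives. Passing to the suppressed multigraph $H$ with length-$1$ and length-$2$ edges and invoking Theorem~\ref{thm: dean et al.} ``on a simple graph extracted from $H$'' does not specify which simple graph is meant, what guarantees its minimum degree is~$3$, or --- most importantly --- how the positions of the length-$2$ edges in $H$ are to be controlled so that the residues mod~$3$ of cycles can actually be read off; that residue depends on how many length-$2$ edges each cycle uses, which is a global and delicate question. The paper's route through this bottleneck is fundamentally different: it upgrades $2$-connectivity to essential $3$-connectivity (Claim~3), produces two vertex-disjoint cycles via Lemma~\ref{lemma: L123} (Claim~4), and then, through the ``clashing paths'' and ``parallel paths'' analysis of Section~\ref{section: proof of main thm}, forces both cycles to be induced $5$-cycles joined by length-$1$ paths --- i.e., forces $G$ to be the Petersen graph or the Petersen graph minus an edge. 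Nothing in your multigraph sketch supplies a substitute for that analysis, so the argument is incomplete exactly where it needs to carry the most weight.
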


For two integers $k>\ell\geqslant 0$ satisfying that
$k\mathbb{Z}+\ell$ contains an even integer,
define $c_{\ell,k}$ to be the least constant such that
every $n$-vertex graph with at least $c_{\ell,k}\cdot n$ edges contains a cycle of length $\ell$ modulo $k$. 
The last remaining class modulo $k$ for $2\leqslant k\leqslant 4$ is the case of $k=3$ and $\ell=1$. 
An upper bound of $c_{1,3}\leqslant 3$ can be obtained in the original paper of Erd\H{o}s \cite{Erd76}.
Our main result, Theorem \ref{thm: main}, implies that $c_{1,3}=\frac{5}{3}$.
Together with the known results mentioned above,
we have all the precise values of $c_{\ell,k}$ for $k\leqslant 4$, see Table \ref{tab:clk}.

\begin{table}[ht]
    \renewcommand{\arraystretch}{1.5}
	\centering
	\begin{tabular}{l|l|l}
		\hline \hline 
		$c_{0,2}=\frac{3}{2}$ (trivial)                   & -                        & -                                              \\ \hline
		$c_{0,3}=2$ (\cite{CS94})             & $c_{1,3}=\frac{5}{3}$ (this paper) & $c_{2,3}=3$ (\cite{DKOT91,Sai92}) \\ \hline
		$c_{0,4}=\frac{19}{12}$ (\cite{G+24}) & -                                      & $c_{2,4}=\frac{5}{2}$ (\cite{GLMX24})       \\ \hline \hline 
	\end{tabular}
	\caption{Precise values of $c_{\ell,k}$ for $k\leqslant 4$.}
	\label{tab:clk}
\end{table}

Furthermore, 
by using Theorem \ref{thm: dean et al.},
we show that if $9|(n-1)$ 
then graphs in which each block is a Petersen graph 
constitute the unique class of extremal graphs 
that have the maximum number of edges
but contain no cycles of length 1 modulo 3.
The proof can be found in Section \ref{section: extremal graphs}.

\begin{theorem}\label{thm: petersen-blocks}
Let $G$ be an $n$-vertex graph containing no $(1\bmod 3)$-cycles, 
then $e(G)\leqslant \frac{5}{3}(n-1)$. 
Moreover, the equality holds if and only if $9|(n-1)$ and each block of $G$ is a Petersen graph.   
\end{theorem}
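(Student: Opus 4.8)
The plan is to derive Theorem~\ref{thm: petersen-blocks} from Theorem~\ref{thm: main} together with a block-decomposition argument. First I would reduce to the case where $G$ is connected: if $G$ has components $G_1,\dots,G_t$ with $n_i$ vertices, then $\sum(n_i-1)=n-t\leqslant n-1$, and since $x\mapsto \frac53(x-1)$ is superadditive in the relevant sense (the bound for the disjoint union is at most the sum of the bounds), it suffices to prove the inequality for each component; moreover equality across $G$ forces $t=1$. Next, since cycles live inside blocks, $G$ contains a $(1\bmod 3)$-cycle iff some block does, so I may further assume $G$ is $2$-connected (a block that is a single edge or a cut vertex contributes trivially to the count). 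For a $2$-connected graph $B$ on $m$ vertices with no $(1\bmod 3)$-cycle, Theorem~\ref{thm: main} gives $e(B)\leqslant 15q+\lfloor 3r/2\rfloor$ where $m-1=9q+r$; a short case check on $r\in\{0,\dots,8\}$ shows $15q+\lfloor 3r/2\rfloor\leqslant \frac53(m-1)$, with equality exactly when $r=0$, i.e.\ $9\mid(m-1)$. Summing over blocks and using $\sum_{\text{blocks }B}(|B|-1)=n-1$ for a connected graph yields $e(G)=\sum e(B)\leqslant \frac53\sum(|B|-1)=\frac53(n-1)$.

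For the equality characterization, equality in $e(G)\leqslant\frac53(n-1)$ forces $G$ connected and every block $B$ to satisfy $e(B)=\frac53(|B|-1)$, hence $9\mid(|B|-1)$ and $e(B)=15q$ with $|B|=9q+1$. I then need to show each such extremal block is the Petersen graph. The Petersen graph has $|B|=10$, $e(B)=15$, which is the $q=1$ case; I must rule out $q\geqslant 2$ and pin down $q=1$. The key tool is Theorem~\ref{thm: dean et al.}: a $2$-connected graph with $\delta\geqslant 3$ and no $(1\bmod 3)$-cycle is the Petersen graph. So it remains to argue that an extremal block necessarily has minimum degree at least $3$. A vertex of degree $1$ is impossible in a $2$-connected graph on $\geqslant 3$ vertices; for a vertex $v$ of degree $2$, I would delete $v$ (or suppress it) and compare edge counts against the bound of Theorem~\ref{thm: main} applied to the smaller graph — deleting $v$ removes $2$ edges and $1$ vertex, and one checks that $15q+\lfloor 3r/2\rfloor$ as a function of $m$ never drops by as much as $2$ when $m$ decreases by $1$ except across a multiple-of-$9$ boundary, and even there the arithmetic shows the resulting graph would have to exceed its own bound unless the original was not extremal; handling the fact that $G-v$ or the suppressed graph may fail to be $2$-connected or simple (a suppressed edge may already be present) is the delicate point and will be treated by passing to blocks again and arguing inductively on $n$.

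I expect the main obstacle to be exactly this last step: showing that an extremal block has no degree-$2$ vertex and hence (by Theorem~\ref{thm: dean et al.}) must be the Petersen graph, which in particular forces $q=1$. The subtlety is that naive vertex deletion can destroy $2$-connectivity or create parallel edges, so the clean statement to prove is a slightly stronger inductive claim — something like: \emph{every connected $G$ with no $(1\bmod 3)$-cycle and $e(G)=\frac53(n-1)$ has all blocks Petersen} — proved by induction on $n$, peeling off an end-block at a cut vertex and, within a $2$-connected extremal graph, using a degree-$2$ vertex to contradict tightness of Theorem~\ref{thm: main}. Once every block is forced to be Petersen, the count $\sum(|B|-1)=n-1$ with each $|B|-1=9$ gives $9\mid(n-1)$, completing the characterization; conversely, if every block of $G$ is a Petersen graph then $G$ has no $(1\bmod 3)$-cycle (Petersen has none, and cycles stay within blocks) and $e(G)=15\cdot\frac{n-1}{9}=\frac53(n-1)$, so the bound is attained.
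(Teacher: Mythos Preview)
Your approach is correct and essentially identical to the paper's: reduce to blocks, use Theorem~\ref{thm: main} for the inequality, and for the equality case show that each extremal $2$-connected block has $\delta\geqslant 3$ (by deleting a degree-$2$ vertex and contradicting Theorem~\ref{thm: main}), then apply Theorem~\ref{thm: dean et al.}. The step you flag as delicate is in fact immediate: Theorem~\ref{thm: main} applies to \emph{arbitrary} graphs, so there is no need to suppress the vertex or worry about $2$-connectivity of $G-v$; plain deletion gives $e(G-v)\geqslant 15q-2=15(q-1)+13$, while the bound for a graph on $9q=9(q-1)+8+1$ vertices is $15(q-1)+12$, forcing a $(1\bmod 3)$-cycle in $G-v\subseteq G$.
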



It is worth noting that for $k>\ell\geqslant 3$, 
Sudakov and Verstra\"{e}te \cite{SV17} showed that 
\[
\frac{ex(k,C_{\ell})}{k}
\leqslant c_{\ell,k}
\leqslant 96\cdot \frac{ex(k,C_{\ell})}{k},
\]
where $ex(k,C_{\ell})$ is the Tur\'{a}n number of $C_{\ell}$, i.e., the maximum number of edges 
in a $k$-vertex graph containing no cycles of length $\ell$.
It follows that for even $\ell\geqslant 4$
determining $c_{\ell,k}$ is at least as hard as the problem of determining $ex(k,C_{\ell})$.

The rest of this paper is organized as follows.
In Section \ref{section: pre}, we give some necessary notations and definitions as well as some useful lemmas. 
In Section \ref{section: lemma L123},
we give a condition for the existence of two disjoint cycles.
Section \ref{section: proof of main thm} 
is devoted to the proof of Theorem \ref{thm: main}.
Constructions attaining the bound of Theorem \ref{thm: main} for every $n$ are given in Section \ref{section: extremal graphs}.

\section{Notations and preliminaries}\label{section: pre}

Let $G=(V,E)$ be a graph and $x\in V(G)$. The {\em degree} of $x$, denoted by $d_G(x)$, is the number of its neighbors in $G$, whose set is denoted by $N_G(x)$. The minimum degree of a graph $G$, denoted by $\delta(G)$, is the minimum number among all the degrees of the vertices in $G$. Denote by $N_2(G)$ the set of vertices with degree 2 in $G$. A subset of $V(G)$ is an {\em independent set} if no two vertices are adjacent to each other.

A path from $x$ to $y$ is called an $(x,y)$-path, 
and $x,y$ are the origin and terminus of the path. 
The distance between $x$ and $y$ in $G$ is the length of a shortest $(x,y)$-path of $G$. If $X,Y$ are two subgraphs of $G$ or two subsets of $V(G)$, then a path from $X$ to $Y$ is one with the origin in $X$, the terminus in $Y$ and all internal vertices outside $X\cup Y$. For a path $P$ (or a cycle $C$), we will use $|P|$ (or $|C|$) to denote its length.

A graph is {\em connected} if there exists a path between any two vertices in the graph. For a positive integer $k$, call a graph {\em $k$-connected} if it has order at least $k+1$ and the deletion of any set of at most $k-1$ vertices results in a connected subgraph. A {\em component} of a graph $G$ is a maximal connected subgraph of $G$. A subset $S$ of $V(G)$ is a {\em vertex cut} of $G$ if $G-S$ has more components than $G$.
If a vertex cut consists of exactly one vertex $x$, then $x$ is a {\em cut-vertex} of $G$. A {\em block} of $G$ is a maximum connected subgraph of $G$ with no cut-vertex. In other words, a block is an isolated vertex, a cut-edge or a maximal 2-connected subgraph. Call a graph {\em trivial} if it contains only one vertex. A vertex cut $S$ of $G$ is {\em essential} if $G-S$ has at least two nontrivial components. Call a connected graph {\em essentially $k$-connected} if it contains no essential vertex cut of size at most $k-1$.

For a path $P$ with two vertices $x,y$ in $V(P)$, denote by $P[x,y]$ the subpath from $x$ to $y$ in $P$. For a cycle $C$ with a given orientation $\overrightarrow{C}$, we use $\overleftarrow{C}$
to denote its reverse-orientation. We use $\overrightarrow{C}[x,y]$ and $\overleftarrow{C}[x,y]$ to denote the paths from $x$ to $y$ along with the orientations $\overrightarrow{C}$ and $\overleftarrow{C}$, respectively. Let $P(x,y)$ and $\overrightarrow{C}(x,y)$ be the paths obtained from $P[x,y]$ and $\overrightarrow{C}[x,y]$ by removing $x$ and $y$.
For a cycle $C$ with a given orientation, and a vertex $x\in V(C)$, denote by $x^+$ the successor, and $x^-$ the predecessor of $x$ along $C$.

For a cycle $C$ and two distinct vertices $x,y$ on it, 
if $|\overrightarrow{C}[x,y]|\equiv |\overrightarrow{C}[y,x]|\bmod 3$, 
then we say that $\{x,y\}$ is a {\em diagonal pair modulo $3$} in $C$; 
otherwise, we say that $\{x,y\}$ is a {\em non-diagonal pair modulo $3$} in $C$. When no confusion occurs, we say $x,y$ are {\em mod-diagonal} (resp. {\em mod-non-diagonal}) in $C$ for short in the following. For terminology and notations not defined here, we refer the reader to \cite{BM08}.


\begin{lemma}\label{LenDiagonal}
Let $C$ be a cycle and let $x,y,z$ be three distinct vertices that appear in this order along $C$. Then\\
\indent $(1)$ $x,y$ are mod-diagonal in $C$ if and only if $|\overrightarrow{C}[x,y]|\equiv-|C|\bmod 3$;\\
\indent $(2)$ if $x$ is mod-diagonal with both $y$ and $z$, then $|\overrightarrow{C}[y,z]|\equiv 0\bmod 3$.
\end{lemma}

\begin{proof}
The assertion (1) can be deduced from the fact that $|\overrightarrow{C}[x,y]|+|\overleftarrow{C}[x,y]|=|C|$, and this follows that $|\overrightarrow{C}[x,y]|\equiv|\overrightarrow{C}[x,z]|$, which implies the assertion (2).
\end{proof}

For a cycle $C$ of $G$,
a {\em bridge} of $C$ is either 
(1) a chord of $C$ (an edge not in $C$ but its end-vertices are in $C$), 
or (2) a component $H$ of $G-C$ together with the edges between $H$ and $C$ 
(and vertices in $N_C(H):=\bigcup_{x\in V(H)}N_C(v)$). The following lemma is a variation of Lemma 1 in \cite{BoVi}, 
and the proof is similar to that in \cite{BoVi}. 
We omit the details.

\begin{lemma}\label{lemma: two cycles}
Let $G$ be a $2$-connected graph, 
$C$ a cycle of $G$ and 
$D$ a connected subgraph of $G-C$. 
Let $C'$ be a cycle disjoint with $D$ 
such that the bridge $B$ of $C'$ containing $D$ is as large as possible. 
Then either \\
\indent $(1)$ $C'$ has only one bridge $B$; or \\
\indent $(2)$ $B$ has exactly two common vertices with $C'$, and all other bridges of $C'$ are paths between the two vertices.
\end{lemma}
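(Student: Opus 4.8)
The plan is to follow the Bondy--Vince template of \cite{BoVi}: fix an extremal cycle $C'$ and, assuming both (1) and (2) fail, reroute $C'$ to produce a better cycle, contradicting extremality. First note that the family of cycles disjoint from $D$ is nonempty, since $C$ itself belongs to it ($D\subseteq G-C$) and $D$ then lies in a unique bridge of $C$; so I may fix a cycle $C'$ disjoint from $D$ for which the bridge $B$ containing $D$ is as large as possible, breaking ties (if needed) by also maximizing the size of the core $H:=B-V(C')$ and then by minimizing the number of bridges of $C'$. Since $C'$ is disjoint from $D$ one has $D\subseteq H$, the graph $H$ is connected, and the neighbourhood of $H$ on $C'$ is exactly the set of attachment vertices of $B$. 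If $C'$ has a single bridge we are in case (1), so I would assume $C'$ has at least two bridges; since $G$ is $2$-connected, $B$ then has at least two attachments, and it remains to prove that $B$ has exactly two attachments $u,v$ and that every other bridge of $C'$ is a $(u,v)$-path.

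To show $B$ has exactly two attachments I would argue by contradiction: suppose $B$ has three attachments $v_1,v_2,v_3$ appearing in this cyclic order, and let $B_1\ne B$ be another bridge, with at least two attachments. The aim is to delete from $C'$ an arc containing an attachment of $B$ in its interior while keeping the cycle disjoint from $D$. I would do this by replacing a suitable arc of $C'$ either by a path whose internal vertices all lie in the core of $B_1$ (such a path is automatically disjoint from $D$, as $B_1\ne B$), or, when the cyclic positions of the attachments force it, by a path through the core of $B$ chosen to avoid $D$ --- here one uses that $H$ is connected and attaches to $C'$ at several points, so such a path exists. In the resulting cycle $C''$, the interior of the deleted arc lies off $C''$ and, through the attachment of $B$ that it contains, is joined in $G-C''$ to $D$; hence $D$ and that arc lie in a single bridge of $C''$ which is strictly larger than $B$, contradicting the choice of $C'$. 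The substance here is a case analysis on whether the attachment sets of $B$ and $B_1$ interlace or nest on $C'$; the few configurations in which no nonempty arc can be freed force $C'$ to be very short (e.g.\ a triangle) and are handled directly or excluded by the tie-breaking rule.

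Now $B$ has exactly two attachments $u,v$, which split $C'$ into two arcs $\alpha,\beta$; take an arbitrary other bridge $B_1$. If $B_1$ had an attachment $z\notin\{u,v\}$, say in the interior of $\alpha$, then rerouting through $B_1$ (joining $z$ to a second attachment of $B_1$ by a path through the core of $B_1$) while deleting an arc of $C'$ through $u$ or $v$ would leave that endpoint off the new cycle but still adjacent to $H$, so the bridge containing $D$ would again grow --- a contradiction, modulo a short separate check when the arcs involved are single edges. Hence every other bridge attaches only at $u$ and $v$. Finally, if such a bridge $B_1$ were not a single path, its core would contain two internally disjoint $u$--$v$ paths; exchanging one arc of $C'$ for one of these paths and keeping the other as a bridge of the new cycle would again enlarge the bridge containing $D$, since it now absorbs the discarded arc. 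This leaves exactly case (2).

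I expect the main obstacle to be combinatorial rather than conceptual: in each configuration one must exhibit a reroute that simultaneously avoids $D$ and frees a nonempty part of $C'$ into the bridge containing $D$, and one must verify that the chosen extremal parameter strictly increases --- which genuinely fails in a handful of degenerate cases (short cycles, empty arcs, attachments coinciding with $u$ or $v$) that require separate treatment or must be pre-empted by the tie-breaking convention. Keeping this case analysis from proliferating, as \cite{BoVi} manages to do, is the delicate point.
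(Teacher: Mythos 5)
Your overall strategy---Bondy--Vince rerouting with a tie-breaking rule---is exactly what the paper points to by citing Lemma 1 of \cite{BoVi} (the paper itself omits the details), and the first two stages of your argument (reducing $B$ to two attachments $u,v$, and confining the attachments of any other bridge $B_1$ to $\{u,v\}$) are on the right track. But the last stage, showing each such $B_1$ is a $(u,v)$-path, fails in two ways. First, a non-path bridge $B_1$ attaching only at $u,v$ need not contain two internally disjoint $(u,v)$-paths through its core: if the core is a path $a\,b\,c$ with $a,b$ both joined to $u$ and $c$ joined to $v$, every $(u,v)$-path through the core passes through $b$ and $c$, while $B_1+uv$ is still $2$-connected, so the configuration is compatible with $G$ being $2$-connected. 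Second, and more fundamentally, your proposed reroute does not grow the bridge containing $D$. After replacing one arc $\alpha$ of $C'$ by a $(u,v)$-path $P_1$ through the core of $B_1$, both $u$ and $v$ remain on the new cycle $C''$; since the core $H$ of $B$ meets $C'$ only at $u,v$, in $G-C''$ the set $H$ is connected neither to the interior of the discarded arc nor to $P_2$. The bridge of $C''$ containing $D$ is therefore still just $H\cup\{u,v\}$, no larger than $B$, so nothing is ``absorbed.''

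What this step actually needs is the observation that a non-path bridge $B_1$ attaching only at $u,v$ contains a cycle $D_1$ avoiding $u$ or avoiding $v$. Indeed, if every cycle of $B_1$ passed through both $u$ and $v$, then $B_1-u$ and $B_1-v$ would both be forests; since every core vertex of $B_1$ has degree at least $2$ in the $2$-connected $G$, this forces $u$ and $v$ to each send a single edge into the core and the core to be a path between those two neighbours, i.e.\ $B_1$ is a $(u,v)$-path. Now $D_1\subseteq B_1$ is disjoint from $D$, and the component of $G-D_1$ containing $D$ contains $H$, the avoided vertex among $u,v$, and all of $C'$ except possibly one vertex; its bridge is therefore strictly larger than $B$, contradicting the extremality of $C'$. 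Replacing your final paragraph by this argument closes the gap.
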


\begin{lemma}\label{LePQ1Q2Q3}
Let $P$, $Q_1$, $Q_2$, $Q_3$ be four paths from $x$ to $y$ such that $P$ is internally-disjoint with $Q_1,Q_2,Q_3$ and $Q_1,Q_2,Q_3$ have pairwise distinct lengths modulo $3$. Then $P\cup Q_1\cup Q_2\cup Q_3$ contains a $(1\bmod 3)$-cycle. 
\end{lemma}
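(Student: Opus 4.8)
The plan is to use $P$ as a fixed ``reference arc'' and to pair it with the three paths $Q_1, Q_2, Q_3$ one at a time, producing three cycles $C_i = P \cup Q_i$ (for $i = 1, 2, 3$) all passing through both $x$ and $y$. Since $P$ is internally disjoint from each $Q_i$, each $C_i$ is a genuine cycle of length $|P| + |Q_i|$. First I would fix the integer $p := |P| \bmod 3$ and set $q_i := |Q_i| \bmod 3$; by hypothesis $q_1, q_2, q_3$ are pairwise distinct modulo $3$, so $\{q_1, q_2, q_3\} = \{0, 1, 2\}$ as a set in $\mathbb{Z}/3\mathbb{Z}$. Consequently $\{p + q_1, p + q_2, p + q_3\} = \{0, 1, 2\}$ as well, so exactly one index $i$ satisfies $p + q_i \equiv 1 \pmod 3$, i.e.\ $|C_i| \equiv 1 \pmod 3$, and that $C_i$ is the desired $(1 \bmod 3)$-cycle.

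The only subtlety — and the step I expect to need a line of care — is the degenerate possibility that some $Q_i$ coincides with $P$, or that the $Q_i$ share internal vertices with each other, so that $P \cup Q_i$ fails to be a cycle or two of the ``cycles'' are forced to be identical. But $Q_1, Q_2, Q_3$ have pairwise distinct lengths modulo $3$, so no two of them are equal as paths, and at most one of them can equal $P$ modulo $3$ in length; in any case the hypothesis only asserts that $P$ is internally disjoint from each $Q_i$ individually, which is precisely what is needed for each $P \cup Q_i$ to be a cycle. The pairwise interaction among the $Q_i$ is irrelevant, since we never take a union of two $Q_i$'s. If one insists on $P \neq Q_i$: should $P = Q_j$ for some $j$, then pick the two indices $i \neq j$ with $\{q_i\} \neq \{q_j\}$; among the cycles $P \cup Q_i$ over those indices we still have $|P| + |Q_i|$ taking two distinct residues, and combined with the trivial ``cycle'' $2|P|$ — actually cleaner to just observe $P = Q_j$ contradicts distinctness once two of the $Q$'s equal $P$, so at most one does, and we proceed with the other two together with $P$ itself, whose length $2|P| \equiv 2p$; then $\{2p, p+q_i, p+q_{i'}\}$ covers all residues.

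In the write-up I would state it in the clean form: since $\{q_1,q_2,q_3\}$ is a complete residue system mod $3$, so is $\{p+q_1, p+q_2, p+q_3\}$, hence one of the three cycles $P\cup Q_1$, $P\cup Q_2$, $P\cup Q_3$ has length $\equiv 1 \pmod 3$. This is essentially a one-paragraph argument; the main obstacle is purely presentational, namely making sure each $P \cup Q_i$ is legitimately a cycle, which follows immediately from the internal-disjointness hypothesis without invoking anything about how the $Q_i$ meet one another.
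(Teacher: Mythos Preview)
Your approach is correct and essentially identical to the paper's one-line proof: the three cycles $P\cup Q_i$ have lengths $|P|+|Q_i|$, which cover all residues modulo $3$, so one of them is a $(1\bmod 3)$-cycle. Your second paragraph's digression about the degenerate case $P=Q_j$ is both unnecessary (the paper ignores it too) and flawed (there is no ``cycle of length $2|P|$''; if you want to handle it, just note that internally-disjoint $(x,y)$-paths can coincide only when both equal the edge $xy$, in which case $|P|\equiv 1$ and the $Q_i$ with $|Q_i|\equiv 0$ already gives a genuine $(1\bmod 3)$-cycle), but the ``clean form'' you say you would actually write up is exactly right.
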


\begin{proof}
Notice that $P\cup Q_1$, $P\cup Q_2$, $P\cup Q_3$ are three cycles with distinct lengths modulo 3. So one of them is a $(1\bmod 3)$-cycle.
\end{proof}

For convenience, we use `disjoint' for `vertex-disjoint'.

\begin{lemma}\label{LeC1C2P1P2}
Let $C_1, C_2$ be two disjoint cycles, 
$P_1,P_2$ two disjoint paths from $C_1$ to $C_2$, 
and denote the end-vertices of $P_i$ by $x_i,y_i$, 
where $x_i\in V(C_1)$, $y_i\in V(C_2)$, $i=1,2$. 
If $x_1,x_2$ are mod-non-diagonal in $C_1$ and $y_1,y_2$ are mod-non-diagonal in $C_2$, then $C_1\cup C_2\cup P_1\cup P_2$ contains a $(1\bmod 3)$-cycle.
\end{lemma}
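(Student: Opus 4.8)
The plan is to consider the closed walk $W$ obtained by concatenating four pieces: an $(x_1,x_2)$-arc of $C_1$, the path $P_2$ from $x_2$ to $y_2$, a $(y_2,y_1)$-arc of $C_2$, and the path $P_1$ back from $y_1$ to $x_1$. Since $C_1,C_2,P_1,P_2$ are pairwise disjoint except at their shared endpoints, any such choice of arcs produces an honest cycle in $C_1\cup C_2\cup P_1\cup P_2$. On $C_1$ we have two choices of $(x_1,x_2)$-arc, namely $\overrightarrow{C_1}[x_1,x_2]$ and $\overleftarrow{C_1}[x_1,x_2]$; likewise two choices of $(y_1,y_2)$-arc on $C_2$. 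This yields four cycles through $P_1\cup P_2$, and the goal is to show that at least one of them has length $\equiv 1\bmod 3$.

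First I would fix notation: write $a_1=|\overrightarrow{C_1}[x_1,x_2]|$ and $a_2=|\overleftarrow{C_1}[x_1,x_2]|$, so $a_1+a_2=|C_1|$; similarly $b_1,b_2$ for the two $(y_1,y_2)$-arcs of $C_2$ with $b_1+b_2=|C_2|$; and let $p=|P_1|+|P_2|$. The four cycle lengths are $a_i+b_j+p$ for $i,j\in\{1,2\}$. The hypothesis that $x_1,x_2$ are mod-non-diagonal in $C_1$ says precisely $a_1\not\equiv a_2\bmod 3$, and similarly $b_1\not\equiv b_2\bmod 3$. So modulo $3$ the set $\{a_1,a_2\}$ consists of two distinct residues, and likewise $\{b_1,b_2\}$; I would then argue that among the four sums $a_i+b_j$ (mod $3$), all three residue classes $0,1,2$ are represented — indeed, if $\{a_1,a_2\}=\{\alpha,\beta\}$ and $\{b_1,b_2\}=\{\gamma,\delta\}$ with $\alpha\ne\beta$, $\gamma\ne\delta$ in $\mathbb{Z}_3$, then the four values $\alpha+\gamma,\alpha+\delta,\beta+\gamma,\beta+\delta$ hit every residue (this is a short case check: two of them, say $\alpha+\gamma$ and $\beta+\delta$, need not be distinct, but the multiset $\{\alpha+\gamma,\alpha+\delta,\beta+\gamma,\beta+\delta\}$ always covers $\mathbb{Z}_3$ since $\{\alpha+\gamma,\alpha+\delta\}$ are two distinct residues and adding $\beta-\alpha\ne 0$ shifts them, forcing the union of the two pairs to be all of $\mathbb{Z}_3$). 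Hence $\{a_i+b_j+p : i,j\}$ also covers all residues modulo $3$, so in particular one of the four cycles has length $\equiv 1\bmod 3$.

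The main obstacle is really just the bookkeeping of the arithmetic claim about $\{a_i+b_j\}$ covering $\mathbb{Z}_3$, together with making sure the four closed walks are genuine cycles; the latter is immediate from disjointness, so the heart of the argument is the observation that two distinct residues on each side force all three residues among the pairwise sums. I would phrase this cleanly as: if $S,T\subseteq\mathbb{Z}_3$ with $|S|\geqslant 2$ and $|T|\geqslant 2$, then $S+T=\mathbb{Z}_3$, which follows from the Cauchy–Davenport inequality (or a direct check), and apply it with $S=\{a_1,a_2\}$, $T=\{b_1,b_2\}$. This keeps the proof to a few lines once the notation is set up.
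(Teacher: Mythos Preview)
Your proposal is correct and follows essentially the same approach as the paper: build the four cycles by choosing one arc from each $C_i$ and show that their lengths cover all residues modulo $3$. The paper streamlines the arithmetic by noting that two distinct residues in $\mathbb{Z}_3$ can always be written as $a$ and $a+1$, so the four sums are $a_0+a_1+a_2$, $a_0+a_1+a_2+1$ (twice), and $a_0+a_1+a_2+2$; your sumset/Cauchy--Davenport phrasing reaches the same conclusion with a touch more machinery but no real difference in content.
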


\begin{proof}
Since $x_1,x_2$ are mod-non-diagonal in $C_1$, 
the two paths from $x_1$ to $x_2$ on $C_1$ 
have lengths $a_1$ and $a_1+1\bmod 3$ for some $a_1$. 
Similarly, the two paths from $y_1$ to $y_2$ on $C_2$ 
have lengths $a_2$ and $a_2+1\bmod 3$ for some $a_2$,
see Figure \ref{fig:2 non-diagonal pairs}. 
Set $a_0=|P_1|+|P_2|$. 
Then there are three cycles of lengths 
$\sum_{i=0}^2a_i$, $\sum_{i=0}^2a_i+1$, $\sum_{i=0}^2a_i+2\bmod 3$, respectively, 
one of which is a $(1\bmod 3)$-cycle.
\end{proof}

\begin{figure}[ht]
\centering
\begin{tikzpicture}[scale=0.6]
\draw(0,0) circle (2); 
\draw(7,0){coordinate (o2)} circle (2);
\coordinate (x1) at (45:2);
\node at (45:1.5) {$x_1$};
\coordinate (x2) at (315:2);
\node at (315:1.5) {$x_2$};
\foreach \x in {1,2}
\draw[fill=black] (x\x) circle (0.1);

\node[left] at (180:2) {$a_1+1$};
\node[right] at (0:2) {$a_1$};

\path(o2)--+(135:2) coordinate (y1);
\path(o2)--+(135:1.5){node{$y_1$}};
\path(o2)--+(225:2) coordinate (y2);
\path(o2)--+(225:1.5){node{$y_2$}};
\foreach \x in {1,2}
\draw[fill=black] (y\x) circle (0.1);

\path(o2)--+(180:2){node[left]{$a_2$}};
\path(o2)--+(0:2){node[right]{$a_2+1$}};

\path(x1) edge [bend left=20] (y1);
\path(x2) edge [bend right=20] (y2);

\node at (135:2.4) {$C_1$};
\path(o2)--+(45:2.4){node{$C_2$}};
\node[above] at (3.5,1.8) {$P_1$};
\node[above] at (3.5,-1.8) {$P_2$};

\end{tikzpicture}
\caption{Two disjoint paths $P_1,P_2$ from $C_1$ to $C_2$. }
\label{fig:2 non-diagonal pairs}
\end{figure}
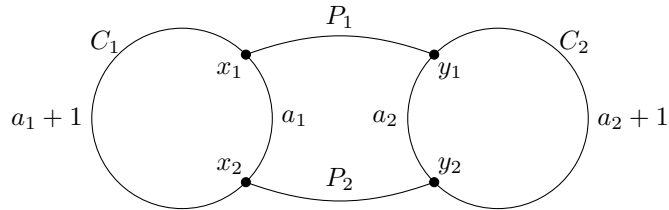

\begin{lemma}\label{lemma: 2 paths mod 3}
Let $G$ be a graph on at least $4$ vertices and $x,y$ two distinct vertices of $G$. If\\
\indent $(1)$ $G+xy$ is $2$-connected; \\
\indent $(2)$ $N_2(G)\backslash\{x,y\}$ is an independent set; and \\
\indent $(3)$ $G$ contains no $4$-cycles; \\
then $G$ contains two $(x,y)$-paths $P_1,P_2$ with $|P_1|\not\equiv|P_2|\bmod 3$.
\end{lemma}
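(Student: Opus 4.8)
The plan is to argue by induction on $|V(G)|$, reducing to simpler graphs by contracting degree-$2$ vertices and by exploiting low connectivity. First I would dispose of the base cases: when $G+xy$ is a small $2$-connected graph (say on $4$ or $5$ vertices) satisfying (2)–(3), one checks directly that two $(x,y)$-paths of distinct lengths modulo $3$ always exist; the $4$-cycle exclusion in (3) is exactly what rules out the otherwise-troublesome configuration $C_4$ through $x,y$. For the inductive step, the first reduction is: if $G$ has a vertex $v\notin\{x,y\}$ with $d_G(v)=2$ and neighbours $u,w$, then since $N_2(G)\setminus\{x,y\}$ is independent, $u$ and $w$ have degree $\geqslant 3$ (or lie in $\{x,y\}$); I would suppress $v$, i.e.\ replace the path $u\,v\,w$ by an edge $uw$ (if $uw\notin E(G)$) or else keep track of a multiplicity, and check that properties (1)–(3) are inherited by the smaller graph $G'$ (the no-$C_4$ condition needs a small argument, since suppressing $v$ could in principle create a short cycle — but a $C_3$ or $C_4$ in $G'$ through the new edge $uw$ would pull back to a $C_4$ or $C_5$ in $G$ through $v$, and one has to verify the $C_4$ case cannot occur; if it does occur the lemma is proved directly by that very $C_4$ together with another path). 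A pair of $(x,y)$-paths in $G'$ of different lengths mod $3$ pulls back to $G$: each use of the edge $uw$ is replaced by $uvw$, which changes both path-lengths by the \emph{same} amount, so the difference mod $3$ is preserved. Hence we may assume $\delta(G-\{x,y\})\geqslant 3$, i.e.\ every vertex other than $x,y$ has degree at least $3$ in $G$.

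The second reduction handles cut-structure. If $G+xy$ is not $3$-connected, pick a $2$-cut $\{a,b\}$ of $G+xy$. The edge $xy$ lies in one side, and — separating into the side containing $xy$ and the other side — one finds a subgraph $H\subseteq G$ which is, together with a suitable new edge $ab$, a smaller $2$-connected graph still satisfying (2) (note $a,b$ play the role of the excepted vertices) and (3); applying induction (or the base case) to $H$ with terminals $a,b$ yields two $(a,b)$-paths in $H$ of distinct lengths mod $3$. Meanwhile on the $xy$-side, $2$-connectivity of $G+xy$ gives two disjoint $\{a,b\}$–$\{x,y\}$ connections, hence a path in $G$ from $x$ to $y$ through $a$ and $b$ (passing through $H$). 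Re-routing the $H$-segment of this path through the two different $(a,b)$-paths produces the two desired $(x,y)$-paths in $G$. So we are reduced to the case where $G+xy$ is $3$-connected with $\delta(G)\geqslant 3$ except possibly at $x,y$, and then one more step — since $x,y$ have degree $\geqslant 2$ and $G+xy$ is $3$-connected — shows $G$ itself is $2$-connected with $\delta(G)\geqslant 3$, or very nearly so.

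At that point I would invoke Theorem \ref{thm: dean et al.} in spirit: a $2$-connected graph with $\delta\geqslant 3$ that is \emph{not} the Petersen graph contains a $(1\bmod 3)$-cycle $C$; by $2$-connectivity there are two disjoint $(x,y)$-paths $R_1,R_2$, and by Menger-type rerouting one can arrange for at least one of them to meet $C$ in a way that lets us ``splice in'' a detour along $C$ changing a path length by $1$ modulo $3$, delivering two $(x,y)$-paths of different residues. The Petersen graph must be handled as an explicit small exceptional case: if $G$ or a relevant subgraph is the Petersen graph one exhibits two $(x,y)$-paths of lengths $5$ and $6$ (or $5$ and $9$) by inspection, using vertex-transitivity to fix $x,y$ at distance $1$ or $2$. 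I expect the main obstacle to be the bookkeeping in the $2$-cut reduction — ensuring that the no-$C_4$ hypothesis and the ``$N_2$ independent'' hypothesis are correctly inherited on \emph{both} sides of the cut after adding the virtual edge $ab$, and that adding $ab$ does not secretly create a $4$-cycle (which, again, would have to be handled by producing the paths directly from that $C_4$). The suppression step's interaction with condition (3) is the other delicate point, since it is the only place where short cycles can be manufactured.
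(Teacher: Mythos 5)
Your proposal takes a genuinely different route from the paper. The paper's proof stays elementary and self-contained: after reducing to the $2$-connected case, it finds a cycle $C$ through $x$ avoiding $y$, shows that every neighbour of the $y$-component of $G-C$ on $V(C)\setminus\{x\}$ is \emph{mod-diagonal} with $x$ (so any two such neighbours $u,v$ have $|\overrightarrow{C}[u,v]|\equiv 0 \bmod 3$), then proves that a closest such pair $\{u,v\}$ is in fact a vertex cut, and applies induction to the side $G[V(\overrightarrow{C}(u,v))\cup\{u,v\}]$ with terminals $u,v$. No degree-$2$ suppression, no appeal to Theorem~\ref{thm: dean et al.}. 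By contrast, your plan leans on suppression and on importing the Dean--Kaneko--Ota--Toft classification, and several of its load-bearing steps are unjustified as written.

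Concretely, three gaps stand out. First, suppressing a degree-$2$ vertex $v\notin\{x,y\}$ with neighbours $u,w$ can create a $4$-cycle through the new edge $uw$; your fallback ``the lemma is proved directly by that very $C_4$ together with another path'' is not supported. That $C_4$ in $G'$ lifts only to a $C_5$ in $G$ through $v$, and there is no reason $x,y$ sit on that $C_5$ or interact with it in a way that produces two $(x,y)$-paths of distinct residues. Moreover in the case $uw\in E(G)$ you must simply delete $v$, which drops $d(u),d(w)$ by one; if $u,w\notin\{x,y\}$ both had degree $3$, they become adjacent degree-$2$ vertices and condition $(2)$ fails in $G'$, so the induction breaks. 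Second, the $2$-cut reduction implicitly assumes the side $H$ not containing $xy$ has $|V(H)|\geqslant 2$ so that $G[V(H)\cup\{a,b\}]$ has order $\geqslant 4$; you can patch this via the degree-$2$ reduction (a trivial side would force a degree-$2$ vertex outside $\{x,y\}$), but that patch depends on the problematic first step. Third, and most seriously, the endgame is a hand-wave: having a $(1\bmod 3)$-cycle $C$ from Theorem~\ref{thm: dean et al.} plus two disjoint $(x,y)$-paths from Menger does not obviously yield two $(x,y)$-paths of different lengths modulo $3$. Splicing an arc of $C$ into an $(x,y)$-path changes its length by some amount, but controlling that amount modulo $3$ requires precisely the kind of mod-diagonal bookkeeping the paper carries out explicitly (and avoids by never invoking $C$ at all). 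Also note that after your reductions $x$ and $y$ may still have degree $2$, so $\delta(G)\geqslant 3$ (the hypothesis of Theorem~\ref{thm: dean et al.}) need not hold in $G$; ``or very nearly so'' is not a proof. The overall strategy could perhaps be salvaged, but each of these three points requires a substantive argument that is currently missing.
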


\begin{proof}
We prove by induction on $n=|V(G)|$. 
If $n=4$, then denote by $V(G)=\{x,y,u,v\}$. 
Since $G+xy$ is $2$-connected, 
we get that $G$ is connected and $d_G(u),d_G(v)\geqslant 2$. 
If $uv\notin E(G)$, then $ux,uy,vx,vy\in E(G)$ and $xuyvx$ is a 4-cycle in $G$, contradicting (3). So $uv\in E(G)$. Since $N_2(G)\backslash\{x,y\}$ is an independent set, we have $\{u,v\}\nsubseteq N_2(G)$ and we can assume w.l.o.g. that $d_G(u)=3$. Then $u$ is adjacent to all vertices in $\{x,y,v\}$. By $d_G(v)\geqslant 2$, we have either $vx\in E(G)$ or $vy\in E(G)$, say $vx\in E(G)$. Now $xuy,xvuy$ are two desired $(x,y)$-paths. 

From now on we assume that $n\geqslant 5$ 
and the desired statement holds for graphs with less than $n$ vertices. We can assume that $xy\notin E(G)$, since otherwise we can remove the edge $xy$ and the resulting graph also satisfies the conditions of our assertion.

\begin{claim}
$G$ is $2$-connected.
\end{claim}

\begin{proof}
Since $G+xy$ is 2-connected, $G$ is connected. 
Assume that $G$ is not 2-connected. 
Then there exists a cut-vertex $z$ and two connected subgraphs $G_1,G_2$ of $G$ on at least two vertices such that $G=G_1\cup G_2$ and $V(G_1)\cap V(G_2)=\{z\}$. 
Since $z$ is not a cut-vertex of $G+xy$, 
we get that $z$ separates $x$ and $y$ in $G$. 
Assume that $x\in V(G_1)\backslash \{z\}$ and $y\in V(G_2)\backslash \{z\}$, and assume w.l.o.g. that $|V(G_1)|\geqslant |V(G_2)|$. If $|V(G_1)|\geqslant 4$, then by induction hypothesis $G_1$ contains two $(x,z)$-paths $Q_1,Q_2$ with different lengths modulo 3. Let $Q$ be any $(z,y)$-path in $G_2$. Now $P_1=Q_1\cup Q$, $P_2=Q_2\cup Q$ are two desired $(x,y)$-paths and we are done. If $|V(G_1)|\leqslant 3$, then $|V(G_2)|\leqslant 3$, and by $|V(G_1)|+|V(G_2)|=n+1\geqslant 6$, we have $|V(G_1)|=|V(G_2)|=3$. Set $V(G_1)=\{x,z,u\}$ and $V(G_2)=\{y,z,v\}$. Since $G+xy$ is 2-connected, we have $d_G(u),d_G(v)\geqslant 2$. Together with $|V(G_1)|=|V(G_2)|=3$, we have $N_G(u)=\{x,z\}$ and $N_G(v)=\{y,z\}$. Since $N_2(G)\backslash \{x,y\}$ is an independent set, $d_G(z)\geqslant 3$. It follows that either $xz\in E(G)$ or $yz\in E(G)$. Assume w.l.o.g. that $xz\in E(G)$. Now $xzvy,xuzvy$ are two $(x,y)$-paths of different lengths modulo 3.
\end{proof}

\begin{claim}
$G$ has a cycle which contains $x$ but does not contain $y$.
\end{claim}

\begin{proof}
Note that every vertex in a 2-connected graph is contained in some cycle. So there exists at least one cycle which contains $x$. Assume that any cycle containing $x$ also contains $y$. Then $\{x,y\}$ is a vertex cut of $G$. Let $H$ be a component of $G-\{x,y\}$ which has maximum number of vertices. Denote by $G_1=G[V(H)\cup \{x,y\}]$. If $|V(H)|=1$, then let $V(H)=\{u\}$. Since $G+xy$ is 2-connected, we have $ux,uy\in E(G)$. Recall that $|V(G)|\geqslant 4$. There exists another trivial component $H'$ with $V(H')=\{v\}$. Similarly, we can get that $vx,vy\in E(G)$. Then a 4-cycle $xuyvx$ appears, a contradiction. So $|V(H)|\geqslant 2$. Then by induction hypothesis there exist two $(x,y)$-paths with different lengths modulo 3 in $G_1$, as desired.
\end{proof}

\begin{claim}\label{CluvEquiv0mod3}
Let $C$ be a cycle of $G$ that contains $x$ but does not contain $y$, 
let $H$ be the component of $G-C$ with $y\in V(H)$, 
and let $u,v$ be two vertices in $N_C(H)\backslash\{x\}$. 
We give an orientation on $C$ such that $x,u,v$ appear in this order along $C$. 
Then $|\overrightarrow{C}[u,v]|\equiv 0\bmod 3$.
\end{claim}

\begin{proof}
    We claim that $x$ and $u$ are mod-diagonal in $C$. Otherwise, $\overleftarrow{C}[x,u]$ and $\overrightarrow{C}[x,u]$ have different lengths modulo 3. Together with an $(u,y)$-path with all internal vertices in $H$, we can find two $(x,y)$-paths of different lengths modulo 3, as desired. So $x$ and $u$ are mod-diagonal in $C$; and similarly, $x$ and $v$ are mod-diagonal in $C$. By Lemma \ref{LenDiagonal}, $|\overrightarrow{C}[x,u]|\equiv|\overrightarrow{C}[x,v]|\equiv-|C|\bmod 3$, and $|\overrightarrow{C}[u,v]|\equiv 0\bmod 3$.
\end{proof}

Now let $C$ be a cycle of $G$ that contains $x$ but does not contain $y$, and let $H$ be the component of $G-C$ that contains $y$. We give an orientation of $C$.

\begin{claim}\label{ClNCHgeq2}
    $|C|\geqslant 5$ and $|N_C(H)\backslash\{x\}|\geqslant 2$.
\end{claim}

\begin{proof}
    By (3), $|C|\neq 4$.
    If $C$ is a triangle, then any vertex in $V(C)\backslash\{x\}$ is mod-non-diagonal with $x$ in $C$. Since $G$ is 2-connected, we get that $H$ has at least one neighbor in $V(C)\backslash\{x\}$. It follows that there are two $(x,y)$-paths of different lengths modulo 3, as desired. So $|C|\geqslant 5$.

    If $|N_C(H)\backslash\{x\}|=1$, say $N_C(H)\backslash\{x\}=\{z\}$, then every vertex in $G-H$ other than $x,z$ has the same degree as that in $G$. Since $G-H$ is 2-connected, by induction hypothesis, $G-H$ contains two $(x,z)$-paths of different lengths modulo 3. Together with a $(z,y)$-path with all internal vertices in $H$, we get two $(x,y)$-paths of different lengths modulo 3 in $G$, as desired.
\end{proof}

By Claim \ref{ClNCHgeq2}, let $u,v\in N_C(H)\backslash\{x\}$, where $x,u,v$ appear in this order along $C$. We choose the cycle $C$ and $u,v$ such that $|\overrightarrow{C}[u,v]|$ is as small as possible. By Claim \ref{CluvEquiv0mod3}, $|\overrightarrow{C}[u,v]|\equiv 0\bmod 3$, and thus $|\overrightarrow{C}[u,v]|\geqslant 3$. 

\begin{claim}\label{CluvCut}
    $\{u,v\}$ is a vertex cut of $G$ separating $V(\overrightarrow{C}(u,v))$ and $V(\overrightarrow{C}(v,u))\cup V(H)$.
\end{claim}

\begin{proof}
Suppose that the assertion is not true. 
Then there exists a path $Q$ from $V(\overrightarrow{C}(u,v))$ to $V(\overrightarrow{C}(v,u))\cup V(H)$ avoiding $u,v$. 
We assume that $Q$ is as short as possible. 
By the choice of $u$ and $v$, 
the component $H$ has no neighbors in $\overrightarrow{C}(u,v)$. 
It follows that $Q$ is a path from $V(\overrightarrow{C}(u,v))$ to $V(\overrightarrow{C}(v,u))$ with all internal vertices in $V(G)\backslash(V(C)\cup V(H))$ (not excluding the case that $Q$ is a chord of $C$). 
Let $w_1,w_2$ be the end-vertices of $Q$, 
where $w_1\in V(\overrightarrow{C}(u,v))$ and $w_2\in V(\overrightarrow{C}(v,u))$. 
Assume w.l.o.g. that 
$w_2\in V(\overrightarrow{C}[x,u])\backslash\{u\}$, 
see Figure \ref{figure: uv-cut}. 
Let $C'=w_2Qw_1\overrightarrow{C}[w_1,w_2]$ 
and let $H'$ be the component of $G-C'$ containing $y$. 
Now $H'$ contains $H$ and $\overrightarrow{C}(w_2,w_1)$. 
It follows that $w_1,v\in N_{C'}(H')$.
Now $|\overrightarrow{C'}[w_1,v]|<|\overrightarrow{C}[u,v]|$,
a contradiction to the choice of $C$ and $u,v$.
\end{proof}

\begin{figure}[ht]
\centering
\begin{tikzpicture}[scale=0.6]
\draw(0,0) circle (2); \node at (135:2.3){$C$};
\draw[fill=black](180:2){node[right]{$x$}} circle (0.1);
\draw[fill=black](315:2) circle (0.1); \node at (315:1.6){$u$};
\draw[fill=black](45:2) circle (0.1); \node at (45:1.6){$v$};
\draw[fill=black](0:2) circle (0.1); \node at (8:1.6){$w_1$};
\draw[fill=black](250:2) circle (0.1); \node at (240:1.6){$w_2$};
\path(0:2) edge [bend right=30] (250:2); \node at (0.2,-0.2){$Q$};
\draw[dashed](7,0){node{$H$}} circle (2);
\draw[fill=black](8.5,0){node[below]{$y$}} circle (0.1);
\draw[fill=black](5.5,0.5) circle (0.1); \draw[fill=black](5.5,-0.5) circle (0.1);
\path[thick](45:2) edge [bend left=20] (5.5,0.5);
\path[thick](315:2) edge [bend right=20] (5.5,-0.5);
\end{tikzpicture}

\caption{A cycle $C$, a component $H$ and a path $Q$.}
\label{figure: uv-cut}
\end{figure}
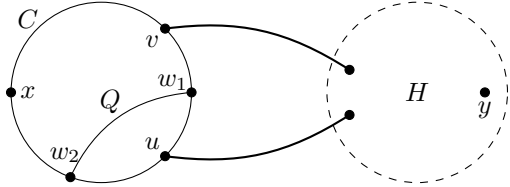

Now we let $H_1$ be a component of $G-\{u,v\}$ containing $\overrightarrow{C}(u,v)$ and let $G_1=G[V(H_1)\cup\{u,v\}]$. By Claim \ref{CluvEquiv0mod3}, $|\overrightarrow{C}[u,v]|\equiv 0\bmod 3$, implying that $|V(G_1)|\geqslant 4$. Notice that $G_1+uv$ is 2-connected. By induction hypothesis, $G_1$ has two $(u,v)$-paths $Q_1,Q_2$ of different lengths modulo 3. Together with $\overrightarrow{C}[x,u]$ and a $(v,y)$-path with all internal vertices in $H$, we find two $(x,y)$-paths of different lengths modulo 3, as desired.
\end{proof}

\section{Two disjoint cycles in graphs}\label{section: lemma L123}

In this paper, 
we need to consider the existence of two disjoint cycles in graphs. In 1965, Lov\'{a}sz \cite{L65} (also see \cite{B78}) characterized graphs that contain no two disjoint cycles. Here we characterize graphs that contain no ($1\bmod 3$)-cycles and no two disjoint cycles.

For a subset $U\subseteq V(G)$, 
we use $\rho(U)$ to denote the number of edges 
that are incident to a vertex in $U$.

\begin{lemma}\label{lemma: L123}
Let $G$ be a $2$-connected graph. Denote by $C$ a shortest cycle of $G$ and $T$ an arbitrary component of $G-C$. Suppose that \\
\indent $(1)$ $G$ contains no $(1\bmod 3)$-cycles; \\
\indent $(2)$ $G$ contains no two disjoint cycles; \\
\indent $(3)$ $\rho(U)>\lfloor\frac{3}{2}|U|\rfloor$ for any subset $U\subseteq V(T)$.\\
Then $C$ is a $5$-cycle and $G[V(C)\cup V(T)]$ is isomorphic to $L_1, L_2$ or $L_3$, as in Figure \ref{figure: L123}.
\end{lemma}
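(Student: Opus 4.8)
The plan is to combine the three hypotheses to pin down the structure of $C$ and of the attachment of $T$ to $C$ very tightly, and then check that only $L_1,L_2,L_3$ survive. First I would extract the basic consequences of hypotheses (1) and (2). Since $C$ is a shortest cycle, $C$ has no chords, so every bridge of $C$ other than $T$ and the other components of $G-C$ is a single edge or none; and since $G$ has no two disjoint cycles, $G-C$ is a forest (any cycle in $G-C$ would be disjoint from $C$). In particular $T$ is a tree. Next, from hypothesis (3) applied to singletons, every vertex of $T$ has degree at least $2$ in $G$ (indeed $\rho(\{v\})=d_G(v)>\lfloor 3/2\rfloor=1$); and since $T$ is a tree with every vertex of $G$-degree $\geq 2$, every leaf of $T$ must send at least one edge to $C$, and in fact one should apply (3) to $U=V(T)$ to get $\rho(V(T))>\lfloor\frac{3}{2}|V(T)|\rfloor$, which bounds $|V(T)|$ from above once we know how few attachment edges to $C$ are possible. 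The key tension is: many edges must be incident to $T$ (by (3)), but $T$ cannot attach to $C$ in many places or with long "$T$-internal detours", because that would create either a $(1\bmod 3)$-cycle (via Lemma \ref{LePQ1Q2Q3} or Lemma \ref{LeC1C2P1P2}-type arguments) or two disjoint cycles.

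The heart of the argument is controlling $N_C(T)$. If $T$ attaches to $C$ at a single vertex $z$, then since $\delta\ge 2$ on $T$ and $T$ is a tree, $T$ together with two internally disjoint $(z,z)$-"ears" would force a cycle disjoint from $C$ once $|N_C(T)|$... — more carefully, with a single attachment $z$, the graph $G[V(C)\cup V(T)]$ would have a cut-vertex $z$, contradicting that $G$ is $2$-connected unless $G=G[V(C)\cup V(T)]$ is exactly $C$ with a pendant tree, which violates $\delta\ge2$. So $|N_C(T)|\ge 2$. If $|N_C(T)|\ge 3$, take three attachment vertices $u_1,u_2,u_3$ on $C$ reached from $T$ by internally disjoint paths through the tree $T$; the three arcs of $C$ between consecutive $u_i$'s, paired with these tree-paths, give (by a Lemma \ref{LePQ1Q2Q3}-style count, or by choosing a suitable vertex of $T$ as a branch point) three cycles whose lengths realize all residues modulo $3$ unless the arcs of $C$ satisfy rigid congruences — and pushing on those congruences, together with $|C|$ being as small as possible, forces $|C|\le$ a small value and then a direct contradiction with no $(1\bmod 3)$-cycle. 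So I expect to conclude $|N_C(T)|=2$, say $N_C(T)=\{u,v\}$. Then $T$ plus the two $u$–$v$ arcs of $C$ forms a "theta-like" configuration: the two internally disjoint $(u,v)$-paths inside $T$ (which exist because $\delta\ge2$ forces $T$, a tree with only the leaves at $u,v$ attaching, to actually be a path — any third leaf would need a third attachment) together with the two arcs of $C$ give four internally disjoint $(u,v)$-paths, and by Lemma \ref{LePQ1Q2Q3} (with $P$ one of them and $Q_1,Q_2,Q_3$ the other three) their lengths modulo $3$ can take at most two values. Combined with $C$ shortest and with (3) forcing $|V(T)|$ and hence these path lengths into a narrow range, I get a short finite case analysis on the lengths modulo $3$ and exact lengths of the two arcs of $C$ and of the $T$-path, which is precisely what distinguishes $L_1,L_2,L_3$; this also forces $|C|=5$.

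The main obstacle, I expect, is the $|N_C(T)|\le 2$ step and, relatedly, showing $T$ must be a path: one has to rule out $T$ being a tree with $\ge 3$ "feet" on $C$ without creating a bad cycle, and the argument has to simultaneously respect that $C$ is a \emph{shortest} cycle (so one cannot freely shortcut). I would handle this by always rerouting: given a hypothetical third attachment or branch vertex, build a new cycle $C'$ using part of $C$ and part of $T$; either $C'$ is shorter than $C$ (contradiction), or $|C'|\equiv 1\bmod 3$ (contradiction with (1)), or $C'$ is disjoint from some other cycle (contradiction with (2)) — the same rerouting philosophy already used in the proof of Lemma \ref{lemma: 2 paths mod 3} via Claim \ref{CluvCut}. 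Once the configuration is a pure theta with four $(u,v)$-paths of total length controlled by (3), identifying $L_1,L_2,L_3$ and $|C|=5$ is a bounded check that I would not grind through here, noting only that (3) applied to $U=V(T)$ supplies the upper bound on $|V(T)|$ that makes the check finite.
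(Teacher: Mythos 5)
Your proposal goes wrong at the central structural step. You expect to prove $|N_C(T)|=2$ and then analyze a ``theta'' configuration, but this is incompatible with the very conclusion of the lemma: in $L_1$ the component $T$ has three attachment vertices on $C$, and in $L_2$ and $L_3$ it has four. Your claim that ``if $|N_C(T)|\ge 3$ \ldots pushing on those congruences \ldots forces a direct contradiction'' is false; the congruences on the arcs of $C$ \emph{can} be satisfied (Claim~\ref{Cldcx1x2} in the paper records exactly which attachment distances are forced, as a function of $|C|$ and the tree-distance), and those are precisely the configurations that survive and become $L_1,L_2,L_3$. Even apart from this, condition (3) alone already rules out $|N_C(T)|=2$: if $T$ were a path attached at only two vertices of $C$ then $\rho(V(T))=(|V(T)|-1)+2=|V(T)|+1$, and $|V(T)|+1>\frac{3}{2}|V(T)|$ forces $|V(T)|<2$, contradicting the fact (Claim~\ref{ClNoStar} in the paper) that $|V(T)|\ge 3$. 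Condition (3) is much stronger than $\delta\ge 2$; it forces $T$ to attach to $C$ very densely, not sparsely.

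There is also a local incoherence in your theta step: you speak of ``the two internally disjoint $(u,v)$-paths inside $T$''. Since $T$ is a tree, there is at most one path in $T$ between any pair of vertices, so a tree can never supply two internally disjoint $(u,v)$-paths. What the actual proof does instead is take a longest path $P=z_1\cdots z_p$ in $T$, show via (3) and Claim~\ref{claim: at most one neighbor} that the penultimate vertices $z_2,z_{p-1}$ have degree exactly $3$ (giving four possible attachment patterns, Figure~\ref{figure: IXY}), bound $|C|\in\{3,5,6,8\}$ by rerouting through short tree-paths, pin down the exact attachment positions via Claim~\ref{Cldcx1x2}, and then run a case analysis on $|C|$ and the local shape at $z_2,z_{p-1}$ using Lemmas~\ref{LePQ1Q2Q3} and~\ref{LeC1C2P1P2} to kill everything except the three target graphs. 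Your rerouting philosophy is the right instinct, but the final structure you are rerouting toward is not a two-legged theta; it is a path $T$ with at least three and up to four feet on a $5$-cycle, and your argument as stated would wrongly exclude exactly the extremal cases.
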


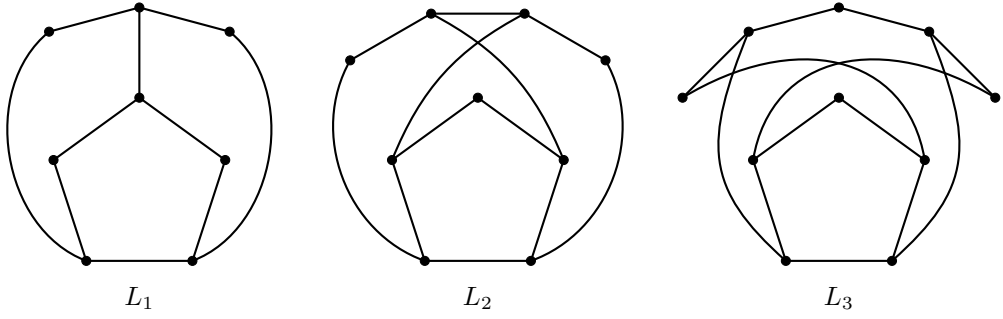
\begin{figure}[ht]
\centering
\begin{tikzpicture}[scale=0.6]

\begin{scope}
\foreach \x in {1,2,...,5}
{\coordinate (x\x) at (162-72*\x:2);
\draw[fill=black](x\x) circle (0.1);}
\draw[thick](x1)--(x2)--(x3)--(x4)--(x5)--(x1); 
\foreach \x in {1,2,3}
{\coordinate (y\x) at (150-30*\x:4);
\draw[fill=black](y\x) circle (0.1);}
\draw[thick](y1)--(y2)--(y3); 
\path[thick] (y1) edge [bend right=60] (x4);
\draw[thick] (y2)--(x1);
\path[thick] (y3) edge [bend left=60] (x3);
\node[below] at (0,-2) {$L_1$};
\end{scope}

\begin{scope}[xshift=7.5cm]
\foreach \x in {1,2,...,5}
{\coordinate (x\x) at (162-72*\x:2);
\draw[fill=black](x\x) circle (0.1);}
\draw[thick](x1)--(x2)--(x3)--(x4)--(x5)--(x1); 
\foreach \x in {1,2,3,4}
{\coordinate (y\x) at (165-30*\x:4);
\draw[fill=black](y\x) circle (0.1);}
\draw[thick](y1)--(y2)--(y3)--(y4); 
\path[thick](y1) edge [bend right=50] (x4);
\path[thick](y2) edge [bend left=20] (x2);
\path[thick](y3) edge [bend right=20] (x5);
\path[thick](y4) edge [bend left=50] (x3);
\node[below] at (0,-2) {$L_2$};
\end{scope}

\begin{scope}[xshift=15.5cm]
\foreach \x in {1,2,...,5}
{\coordinate (x\x) at (162-72*\x:2);
\draw[fill=black](x\x) circle (0.1);}
\draw[thick](x1)--(x2)--(x3)--(x4)--(x5)--(x1); 
\foreach \x in {1,2,...,5}
{\coordinate (y\x) at (180-30*\x:4);
\draw[fill=black](y\x) circle (0.1);}
\draw[thick](y1)--(y2)--(y3)--(y4)--(y5); 
\draw[thick](y1)..controls (-1,3.5) and (1.5,3)..(x2);
\draw[thick](y2)..controls (-3,1) and (-3,0)..(x4);
\draw[thick](y4)..controls (3,1) and (3,0)..(x3);
\draw[thick](y5)..controls (1,3.5) and (-1.5,3)..(x5);
\node[below] at (0,-2) {$L_3$};
\end{scope}
\end{tikzpicture}
\caption{Graphs $L_1$, $L_2$ and $L_3$.}\label{figure: L123}
\end{figure}

We remark that in each of the graphs $L_1,L_2,L_3$, 
every 5-cycle can be transferred to another 5-cycle by an isomorphic mapping.

\begin{proof}
Since $G$ contains no two disjoint cycles, $T$ is a tree. We first obtain some structural properties of $G$ from the the following claims.

\setcounter{claim}{0}
\begin{claim}\label{claim: at most one neighbor}
$d_C(z)\leqslant 1$ for any $z\in V(T)$.
\end{claim}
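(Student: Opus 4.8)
The plan is to prove the claim by contradiction. I would suppose that some $z\in V(T)$ has two distinct neighbours $c_1,c_2$ on $C$, derive that $|C|\le 4$, and then rule out both surviving possibilities $|C|=3$ and $|C|=4$ using hypothesis (1) that $G$ contains no $(1\bmod 3)$-cycles.

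The first step would use only the minimality of $C$. Let $A_1$ and $A_2$ be the two $c_1c_2$-arcs of $C$, with lengths $\ell_1\le\ell_2$, so that $\ell_1,\ell_2\ge 1$ and $\ell_1+\ell_2=|C|$. Then $zc_1A_1c_2z$ is a cycle of length $\ell_1+2$ — and it is a genuine cycle, since $z\notin V(C)$ forces its vertices to be pairwise distinct — so minimality of $C$ gives $\ell_1+2\ge|C|=\ell_1+\ell_2$, whence $\ell_2\le 2$ and therefore $|C|=\ell_1+\ell_2\le 4$. For the second step I would invoke (1): if $|C|=4$ then $C$ itself has length $4\equiv 1\bmod 3$, contradicting (1); and if $|C|=3$, say $C=c_1c_2c_3$, then $A_2$ is the length-$2$ path $c_1c_3c_2$ and $zc_1A_2c_2z$ is a cycle on the four distinct vertices $z,c_1,c_3,c_2$ of length $4\equiv 1\bmod 3$, again contradicting (1). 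Hence no vertex of $T$ can have two neighbours on $C$, which is the claim.

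I do not expect any real obstacle: the argument needs only that $C$ is a shortest cycle and that $G$ has no $(1\bmod 3)$-cycles, and it uses neither hypothesis (2) nor hypothesis (3) nor the fact (noted just before the claim) that $T$ is a tree. The only thing to keep an eye on is that minimality of $C$ by itself does not finish the job — it merely bounds $|C|\le 4$ — so hypothesis (1) has to be used a second time to eliminate the residual triangle and $4$-cycle; and, as above, one should check that the auxiliary length-$4$ cycle produced when $|C|=3$ really is a cycle and not a closed walk that revisits a vertex.
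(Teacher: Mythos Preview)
Your proof is correct and is essentially the same as the paper's: both argue that two neighbours on $C$ would yield (via the shorter arc plus the two edges through $z$) a cycle of length at most $|C|$, forcing $|C|\le 4$, and then use hypothesis~(1) to eliminate the $4$-cycle case directly and the triangle case by producing a $4$-cycle through $z$. The only cosmetic difference is the order of the case analysis.
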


\begin{proof}
Assume the opposite that $z$ has two neighbors $x_1,x_2$ in $C$ for some $z\in V(T)$. We can assume w.l.o.g. that $|\overrightarrow{C}[x_1,x_2]|\leqslant |\overrightarrow{C}[x_2,x_1]|$. If $|C|=3$, then a 4-cycle appears, a contradiction. Recall that $|C|\neq 4$. If $|C|\geqslant 5$, then $\overrightarrow{C}[x_1,x_2]x_2zx_1$ is a cycle shorter than $C$ as $|\overrightarrow{C}[x_2,x_1]|\geqslant 3$, a contradiction to the minimality of $C$.
\end{proof}

\begin{claim}\label{ClLeaf}
$d_C(z)=1$ and $d_G(z)=2$ for any leaf $z$ of $T$.
\end{claim}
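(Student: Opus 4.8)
The plan is to deduce the claim directly from hypothesis (3), the tree structure of $T$, and the already-established Claim \ref{claim: at most one neighbor}. The key observation is that for a leaf $z$ of $T$ the degree splits as $d_G(z)=d_T(z)+d_C(z)$: since $T$ is a component of $G-C$, every neighbor of $z$ lies either on $C$ or in $T$. So it suffices to pin down both $d_T(z)=1$ and $d_C(z)=1$.

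First I would record a lower bound on degrees inside $T$: applying (3) to the singleton $U=\{z\}$ gives $\rho(\{z\})=d_G(z)>\lfloor\tfrac{3}{2}\rfloor=1$, hence $d_G(z)\geqslant 2$ for every $z\in V(T)$ (alternatively one could invoke $\delta(G)\geqslant 2$, which holds because $G$ is $2$-connected). Next I would fix $d_T(z)$: since $G$ contains no two disjoint cycles, $T$ is a tree, so a leaf $z$ of a tree on at least two vertices satisfies $d_T(z)=1$. The only degenerate situation to dispose of is $|V(T)|=1$; but then $d_T(z)=0$ would force $d_C(z)=d_G(z)\geqslant 2$, contradicting Claim \ref{claim: at most one neighbor}. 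Hence $|V(T)|\geqslant 2$ and $d_T(z)=1$ for every leaf $z$ of $T$.

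Combining these facts, $d_G(z)=1+d_C(z)$; Claim \ref{claim: at most one neighbor} gives $d_C(z)\leqslant 1$, so $d_G(z)\leqslant 2$, and together with $d_G(z)\geqslant 2$ this forces $d_G(z)=2$ and $d_C(z)=1$, which is exactly the assertion. The only point requiring any care — and the only thing resembling an obstacle — is ruling out the trivial-$T$ case together with the convention for what counts as a leaf of a one-vertex tree; everything else is a one-line combination of the hypotheses and Claim \ref{claim: at most one neighbor}, so I do not expect a genuine difficulty here.
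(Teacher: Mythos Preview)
Your argument is correct and follows essentially the same route as the paper: the paper simply notes that $G$ is $2$-connected so $d_G(z)\geqslant 2$, and then invokes Claim~\ref{claim: at most one neighbor} together with $d_T(z)=1$ to conclude. Your use of hypothesis~(3) on the singleton $\{z\}$ in place of $2$-connectedness, and your explicit disposal of the $|V(T)|=1$ case, are harmless elaborations of the same one-line idea.
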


\begin{proof}
By $G$ being 2-connected, $d_G(z)\geqslant 2$. The desired result then follows from Claim \ref{claim: at most one neighbor}.
\end{proof}

\begin{claim}
$|C|\in \{3,5,6,8\}$.
\end{claim}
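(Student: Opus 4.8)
Since $C$ is itself a cycle of $G$ and $G$ has no $(1\bmod 3)$-cycle, $|C|\not\equiv 1\pmod 3$; together with $|C|\ge 3$ this means that $\{3,5,6,8\}$ is exactly the set of admissible values that are at most $8$, so it suffices to derive a contradiction from $g:=|C|\ge 9$. A first reduction: $C$ has no chord, since a chord would split $C$ into two cycles of lengths summing to $g+2$, the shorter of length at most $\lfloor g/2\rfloor+1<g$, contradicting the minimality of $C$.

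Next I would pin down the structure of $T$ forced by hypothesis (3). As $G$ has no two disjoint cycles, $G[V(T)]$ has no cycle, so $T$ is a tree; by Claim~\ref{claim: at most one neighbor} each vertex of $T$ has at most one neighbour on $C$, so $\rho(V(T))=(|V(T)|-1)+m$, where $m$ is the number of vertices of $T$ adjacent to $C$. Feeding $U=V(T)$ into (3) gives $m\ge\lfloor|V(T)|/2\rfloor+2$; in particular $|V(T)|\ge 3$ (the values $|V(T)|\in\{1,2\}$ already violate (3)) and $m\ge 3$. Moreover Claim~\ref{ClLeaf} says leaves of $T$ have degree $2$, and applying (3) to a leaf-edge forces the $T$-neighbour of a leaf to have degree $\ge 3$, so $T$ has few vertices of degree at most $2$.

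The core of the proof is to confront three forbidden objects — a cycle shorter than $C$ (minimality), a $(1\bmod 3)$-cycle (hypothesis (1)), two disjoint cycles (hypothesis (2)) — along a well-chosen ear of $C$ through $T$. List the attachment vertices of $T$ on $C$ cyclically as $x_1,\dots,x_{m'}$, with arc lengths $a_1,\dots,a_{m'}$ summing to $g$, and for consecutive $x_i,x_{i+1}$ let $P_i$ be the $x_i$–$x_{i+1}$ path through $T$, of length $p_i$. Minimality of $C$, applied to the two cycles $P_i\cup(\text{an arc of }C)$, forces $p_i\ge\max(a_i,g-a_i)$; so the $P_i$ with $a_i=\min_j a_j\le g/m'\le g/3$ has $p_i\ge g-a_i\ge 2g/3$, which forces $|V(T)|$ — hence $m$ — to be large, which (as $C$ has only $g$ vertices) forces $\min_j a_j$ down, and iterating this eventually produces two consecutive attachments $x_i,x_{i+1}$ at distance at most $2$ on $C$ joined by a path $P_i$ of length at least $g-2$ through $T$. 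I would then analyse residues modulo $3$ — of $g$, of the relevant arc lengths, and of the cycles built from $C$, from $P_i$, and from a further attachment branching off the interior of $P_i$ inside $T$ — distinguishing $g\equiv 0$ and $g\equiv 2\pmod 3$; when four internally-disjoint paths between two vertices become available, one invokes Lemma~\ref{LePQ1Q2Q3}, and otherwise one reads off directly a $(1\bmod 3)$-cycle, a cycle shorter than $C$, or two disjoint cycles (using Lemma~\ref{lemma: two cycles} to relocate bridges when convenient), a contradiction in every case.

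I expect the main obstacle to be the shape of $T$: the ears through a tree can all share a single vertex (a "spider" centred at one vertex $w$), so hypothesis (2) need not be violated, and the contradiction must then be extracted purely from the minimality of $C$ and the modulo-$3$ constraints across all $m$ attachments at once. Organising that case analysis — controlling the admissible residues of the $a_i$ and $p_i$, and disposing of the degenerate situations where two vertices of $T$ share a neighbour on $C$ or a vertex of $C$ has two neighbours in $T$ (each of which, by minimality of $C$ and the absence of $4$-cycles, already forces a long path through $T$) — is where the real bookkeeping lies.
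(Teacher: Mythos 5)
The proposal is not a completed proof. You set up a plausible framework---$T$ is a tree, each $T$-vertex has at most one $C$-neighbour, $\rho(V(T))=(|V(T)|-1)+m$ gives a lower bound on $m$, and the minimality of $C$ forces $p_i\geqslant\max(a_i,g-a_i)$ for each ear---but the ``bootstrap'' that is supposed to yield a contradiction is only gestured at, and your final paragraph concedes that the core case analysis is not done. There is also a concrete gap in the bootstrap as written: the step $\min_j a_j\leqslant g/m'$ needs a lower bound on $m'$, the number of \emph{distinct} attachment vertices on $C$, but the inequality you derive from hypothesis~(3) bounds $m$, the number of $T$-vertices with a $C$-neighbour. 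Since $m'\leqslant m$ and several $T$-vertices can attach to the same vertex of $C$ (which is consistent with the minimality of $C$ as long as they are far apart in $T$), a large $m$ does not force a large $m'$, and the iteration does not close.

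The paper's argument is much shorter and dodges all of this. Let $P=z_1z_2\cdots z_p$ be a longest path in $T$, so $z_1$ is a leaf with a unique $C$-neighbour $y_1$. Since $|C|\geqslant 9$, any two vertices of $C$ are joined by an arc of length at least $\lceil 9/2\rceil=5$. If $z_2$ has a $C$-neighbour $y_2$, then $y_1\neq y_2$ (else $z_1z_2y_1$ is a triangle shorter than $C$), and replacing the long arc between $y_1$ and $y_2$ by the path $y_1z_1z_2y_2$ of length $3$ produces a cycle of length at most $|C|-2$, contradicting minimality. Otherwise $d_C(z_2)=0$, and hypothesis~(3) applied to $\{z_1,z_2\}$ forces $d_T(z_2)\geqslant 3$, giving a sibling leaf $z'_1$ of $z_1$ with $C$-neighbour $y'_1\neq y_1$ (else a $4$-cycle); then $y_1z_1z_2z'_1y'_1$ has length $4$ and again replaces an arc of length $\geqslant 5$, contradicting minimality. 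That single observation---a $C$-to-$C$ path of length $3$ or $4$ through the end of a longest path in $T$ always exists and always shortcuts $C$ when $|C|\geqslant 9$---is the whole proof; no global bookkeeping over all ears or attachments is needed.
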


\begin{proof}
Since $|C|\geqslant 3$ and $|C|\not\equiv 1\bmod 3$,
it suffices to show that $|C|\leqslant 8$. 
Assume the opposite that $|C|\geqslant 9$. 
Let $P=z_1z_2z_3\ldots z_p$ be a longest path of $T$. Thus $z_1$ is a leaf of $T$ and $d_C(z_1)=1$ by Claim \ref{ClLeaf}. Denote by $y_1$ the neighbor of $z_1$ in $C$. If $z_2$ has a neighbor in $C$, say $y_2$, then $y_1\neq y_2$ (otherwise $z_1z_2y_1z_1$ is a cycle shorter than $C$) and $\max\{|\overrightarrow{C}[y_1,y_2]|,|\overrightarrow{C}[y_2,y_1]|\}\geqslant 5$. Replacing $\overrightarrow{C}[y_1,y_2]$ or $\overrightarrow{C}[y_2,y_1]$ with the 3-path $y_1z_1z_2y_2$ will yield a cycle shorter than $C$, a contradiction. So $d_C(z_2)=0$. By (3), $\rho(\{z_1,z_2\})\geqslant 4$, implying that $d_T(z_2)\geqslant 3$. So $z_2$ has a neighbor $z'_1$ distinct with $z_1,z_3$ in $T$. One can see that $z'_1$ is also a leaf of $T$ as $P$ is a longest path in $T$. So $z'_1$ has a neighbor in $C$, say $y'_1$. If $y_1=y'_1$, then a 4-cycle appears, a contradiction. So $y_1\neq y'_1$. Now replacing $\overrightarrow{C}[y_1,y'_1]$ or $\overrightarrow{C}[y'_1,y_1]$ with the 4-path $y_1z_1z_2z'_1y'_1$ will also yield a cycle shorter than $C$, a contradiction.
\end{proof}

\begin{claim}\label{Cldcx1x2}
Let $z_1,z_2\in V(T)$ with $d_T(z_1,z_2)=1$ or $2$, and let $y_1,y_2$ be neighbors of $z_1,z_2$, respectively, in $C$. Then the distances $d_C(y_1,y_2)$ is determined by $d_T(z_1,z_2)$ and $|C|$, as in Table \ref{tab:distance}.
Moreover, if $d_T(z_1,z_2)=1$, then $|C|\neq 8$. 

\begin{table}[ht]
\renewcommand{\arraystretch}{1.5}
\centering
\begin{tabular}{c|c|c|c|c}
  \hline\hline
   & $|C|=3$ & $|C|=5$ & $|C|=6$ & $|C|=8$\\\hline
  $d_T(z_1,z_2)=1$ & 0 & 2 & 3 & -\\\hline
  $d_T(z_1,z_2)=2$ & 1 & 1 & 2 & 4\\\hline\hline
\end{tabular}

\caption{The distance $d_C(y_1,y_2)$.}
\label{tab:distance}
\end{table}
\end{claim}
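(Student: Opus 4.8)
The plan is to prove Claim~\ref{Cldcx1x2} by a short case analysis driven entirely by the minimality of $C$ and the forbidden configurations (no $4$-cycle, no $(1\bmod 3)$-cycle, no two disjoint cycles), using the $P$-internally-disjoint paths lemma (Lemma~\ref{LePQ1Q2Q3}) as the main tool to rule out bad distances. First I would fix the unique $z_1$--$z_2$ path $R$ inside $T$, of length $d:=d_T(z_1,z_2)\in\{1,2\}$, together with the edges $z_1y_1$ and $z_2y_2$; since $T$ is a tree and these are the only edges of $R$ leaving into $C$ among the relevant vertices, the concatenation $Q_0:=y_1 z_1 R z_2 y_2$ is a path from $y_1$ to $y_2$ with $|Q_0|=d+2$, internally disjoint from $C$. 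The two arcs $\overrightarrow{C}[y_1,y_2]$ and $\overrightarrow{C}[y_2,y_1]$ together with $Q_0$ give (when $y_1\neq y_2$) two cycles whose lengths are $|\overrightarrow{C}[y_1,y_2]|+d+2$ and $|\overrightarrow{C}[y_2,y_1]|+d+2$; and of course $|\overrightarrow{C}[y_1,y_2]|+|\overrightarrow{C}[y_2,y_1]|=|C|$.

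The key steps are then: (i) \emph{$y_1\neq y_2$ unless $|C|=3$ and $d=1$}; indeed if $y_1=y_2$ and $d=1$ we get a triangle $z_1z_2y_1$, which is a cycle of length $3$, forcing $|C|=3$ by minimality; if $y_1=y_2$ and $d=2$ we get a $4$-cycle, excluded by hypothesis~(3) of Lemma~\ref{lemma: L123}. This already pins down the $(|C|=3,d=1)$ entry as $0$ and the $(|C|=3,d=2)$ entry as $1$ (the short arc has length $1$, and length $0$ was just excluded). (ii) \emph{Minimality bounds the short arc}: the shorter of the two arcs, call its length $a$, satisfies $a+d+2\geqslant |C|$, i.e.\ $a\geqslant |C|-d-2$, because otherwise $Q_0$ plus that arc is a cycle strictly shorter than $C$. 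Combined with $a\leqslant\lfloor |C|/2\rfloor$ this leaves very few possible values of $a$ in each column. (iii) \emph{No-$(1\bmod 3)$ kills the rest}: whichever arc we use, the resulting cycle must not have length $\equiv 1\pmod 3$; since $|\overrightarrow{C}[y_1,y_2]|+|\overrightarrow{C}[y_2,y_1]|=|C|$ is itself $\not\equiv 1\pmod 3$ (it is $0$ or $2$ mod $3$, as $|C|\in\{3,5,6,8\}$), at most one of the two cycle-lengths can be $\equiv 1$, so the remaining constraint is a single congruence on $a$. Running (ii) and (iii) together: for $|C|=5$, $d=1$ gives $a\geqslant 2$ hence $a=2$; $d=2$ gives $a\geqslant 1$, and $a=2$ would make the cycle $y_1 z_1 R z_2 y_2 \overrightarrow{C}[y_2,y_1]$ have length $2+4=6\not\equiv1$ but the \emph{other} cycle length $3+4=7\equiv1\pmod 3$, a contradiction, so $a=1$. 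For $|C|=6$: $d=1$ gives $a\geqslant 3$ hence $a=3$; $d=2$ gives $a\geqslant 2$, and $a=3$ would give cycle lengths $3+4=7\equiv1$, excluded, so $a=2$. For $|C|=8$: $d=1$ would force $a\geqslant 5>\lfloor 8/2\rfloor=4$, impossible — this is exactly the ``moreover'' clause; $d=2$ gives $a\geqslant 4$ hence $a=4$.

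The main obstacle, and the only place requiring genuine care rather than bookkeeping, is making sure that the paths forming these short cycles really are valid — in particular that $R$ does not meet $C$ at an internal vertex and that no vertex of $R$ other than $z_1,z_2$ has a neighbor on $C$ that we are implicitly assuming absent. For $d=1$ there is no internal vertex, so this is automatic. For $d=2$ the single internal vertex $w$ of $R$ could a priori have a neighbor on $C$, but Claim~\ref{claim: at most one neighbor} already limits $d_C(w)\leqslant 1$, and if $w$ had a neighbor $y'\in V(C)$ one would rerun the same minimality/congruence argument on the shorter paths $y_1z_1wy'$ and $y'wz_2y_2$; however, for the statement as written we only need that \emph{some} choice of $y_1,y_2$ realizes the tabulated distance, and the cleanest route is to note that any violation of the table produces either a cycle shorter than $C$ (via step (ii), possibly using a sub-path of $R$) or a $(1\bmod 3)$-cycle (via step (iii)), in both cases a contradiction. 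I would also double-check the degenerate possibility that $\overrightarrow{C}[y_1,y_2]$ is a single edge together with a chord $Q_0$ creating a multi-edge — this cannot happen since $Q_0$ has length $d+2\geqslant 3$. Once these validity points are dispatched, the table entries fall out as the unique values surviving steps (i)--(iii), and the ``moreover'' clause is the observation that the $|C|=8$, $d=1$ cell has no valid entry.
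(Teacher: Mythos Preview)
Your approach is essentially identical to the paper's: bound the short arc $a$ below by minimality of $C$ (namely $a\geqslant |C|-d-2$) and above by $a\leqslant\lfloor|C|/2\rfloor$, then eliminate the one or two surviving values by exhibiting a forbidden $4$- or $7$-cycle. Two small fixes: the exclusion of $4$-cycles follows from hypothesis~(1) of Lemma~\ref{lemma: L123} (no $(1\bmod 3)$-cycle), not hypothesis~(3); and the claim is universal over all choices of $y_1,y_2$, not existential --- your steps (i)--(iii) already prove this, so the anxious paragraph about the internal vertex $w$ is unnecessary (since $w\in V(T)$ lies off $C$, the path $Q_0$ is automatically internally disjoint from $C$ regardless of $w$'s neighbors).
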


\begin{proof}
Suppose w.l.o.g. that $|\overrightarrow{C}[y_1,y_2]|\leqslant |C|/2$ (including the case $y_1=y_2$). Set $P=\overrightarrow{C}[y_1,y_2]$ and let $Q$ be the path from $z_1$ to $z_2$ in $T$. Then $|P|=d_C(y_1,y_2)$, $|Q|=d_T(z_1,z_2)$ and the cycle $y_1Py_2z_2Qz_1y_1$ has length $|P|+|Q|+2$. If $|Q|=1$, then $|P|\geqslant |C|-3$, and if $|Q|=2$, then $|P|\geqslant |C|-4$; since otherwise $y_1Py_2z_2Qz_1y_1$ is a cycle shorter than $C$. 

Suppose first that $|Q|=1$. If $|C|=3$, then $|P|=0$ or 1. But if $|P|=1$, then $Py_2z_2Qz_1y_1$ is a 4-cycle, contradicting (1). This implies that $|P|=0$, as desired. If $|C|=5$, then $|C|-3\leqslant |P|\leqslant |C|/2$, implying that $|P|=2$, as desired. If $|C|=6$, then $|C|-3\leqslant |P|\leqslant |C|/2$, implying that $|P|=3$, as desired. If $|C|=8$, then $|C|-3\leqslant |P|\leqslant |C|/2$ and $5\leqslant 4$, a contradiction.

Now suppose that $|Q|=2$. If $|C|=3$, then $|P|=0$ or 1. But if $|P|=0$, then $y_1=y_2$ and $y_1z_2Qz_1y_1$ is a 4-cycle, a contradiction. This implies that $|P|=1$, as desired. If $|C|=5$, then $|C|-4\leqslant |P|\leqslant |C|/2$, implying that $|P|=1$ or 2. But if $|P|=2$, then $y_1\overleftarrow{C}[y_1,y_2]y_2z_2Qz_1y_1$ is a 7-cycle, a contradiction. This implies that $|P|=1$, as desired. If $|C|=6$, then $|C|-4\leqslant |P|\leqslant |C|/2$, implying that $|P|=2$ or 3. But if $|P|=3$, then $Py_2z_2Qz_1y_1$ is a 7-cycle, a contradiction. This implies that $|P|=2$, as desired. If $|C|=8$, then $|C|-4\leqslant |P|\leqslant |C|/2$, implying that $|P|=4$, as desired.
\end{proof}

\begin{claim}\label{ClNoStar}
$|V(T)|\geqslant 3$ and $T$ is not a star.
\end{claim}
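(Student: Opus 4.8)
The plan is to establish the two assertions separately, leaning throughout on the structural claims already proved --- especially Claims \ref{claim: at most one neighbor}, \ref{ClLeaf} and \ref{Cldcx1x2} --- together with hypothesis (3). For $|V(T)|\geqslant 3$: if $V(T)=\{z\}$ is a single vertex, then any neighbour of $z$ outside $C$ would lie in $T\setminus\{z\}=\emptyset$, so $d_C(z)=d_G(z)\geqslant 2$ by $2$-connectivity, contradicting Claim \ref{claim: at most one neighbor}; and if $|V(T)|=2$, then $T$ is a single edge $z_1z_2$ with both ends leaves, so by Claim \ref{ClLeaf} each $z_i$ has degree $2$ with exactly one neighbour on $C$, whence the only edges incident with $\{z_1,z_2\}$ are $z_1z_2$ and the two $C$-edges at $z_1,z_2$, giving $\rho(\{z_1,z_2\})=3=\lfloor\tfrac{3}{2}\cdot 2\rfloor$ and contradicting (3). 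Thus $|V(T)|\geqslant 3$.

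For the second assertion, suppose for a contradiction that $T$ is a star $K_{1,m}$ with centre $c$ and leaves $z_1,\dots,z_m$; by the first part $m+1\geqslant 3$, and since $K_{1,2}$ is a path it remains only to rule out $m\geqslant 3$. Each $z_i$ is a leaf, so by Claim \ref{ClLeaf} it has a unique neighbour $y_i$ on $C$, and these $y_i$ are pairwise distinct: indeed, if $y_i=y_j$ with $i\neq j$ then $cz_iy_iz_jc$ is a $4$-cycle, contradicting (1). Since $d_T(z_i,z_j)=2$ for all $i\neq j$, Claim \ref{Cldcx1x2} (the $d_T=2$ row of Table \ref{tab:distance}) forces $d_C(y_i,y_j)$ to equal a constant $\ell$ depending only on $|C|$, namely $\ell\in\{1,1,2,4\}$ according as $|C|\in\{3,5,6,8\}$. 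I then want the elementary fact that $m\geqslant 3$ points lying pairwise at distance exactly $\ell$ on a cycle of length $|C|$ force $|C|=3\ell$ and $m=3$: writing the cyclic gaps between consecutive $y$'s as $a_1,\dots,a_m\geqslant 1$ with $\sum_i a_i=|C|$, each $a_i\in\{\ell,\,|C|-\ell\}$; a short count rules out the value $|C|-\ell$, so all $a_i=\ell$ and $|C|=m\ell$, and then the pairs two apart also being at distance $\ell$ gives $\min(2\ell,|C|-2\ell)=\ell$, i.e.\ $|C|=3\ell$, whence $m=3$. Matching this against the possible $\ell$ leaves only $|C|\in\{3,6\}$, with exactly $m=3$ leaves.

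It remains to eliminate this last configuration by splitting on $d_C(c)\in\{0,1\}$, the only options by Claim \ref{claim: at most one neighbor}. If $d_C(c)=0$ then $\rho(V(T))=2m=6=\lfloor\tfrac{3}{2}\cdot 4\rfloor=\lfloor\tfrac{3}{2}|V(T)|\rfloor$, contradicting (3). If $d_C(c)=1$, let $w$ be the neighbour of $c$ on $C$; since $d_T(c,z_i)=1$, Claim \ref{Cldcx1x2} gives $d_C(w,y_i)=0$ when $|C|=3$ and $d_C(w,y_i)=3$ when $|C|=6$, so in either case $y_i$ must be one fixed vertex of $C$ for every $i$ (namely $w$ when $|C|=3$, and the antipode of $w$ on the hexagon when $|C|=6$), contradicting the distinctness of $y_1$ and $y_2$. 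This finishes the proof. The only step that takes real work is the equidistance count; once $|C|$ and the number of leaves are fixed, everything else is an immediate appeal to the stored claims and to (3), and the points to watch are that the split $d_C(c)\in\{0,1\}$ is exhaustive and that (3) is applied to $U=V(T)$.
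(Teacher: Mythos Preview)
Your argument for $|V(T)|\geqslant 3$ and for the case $m\geqslant 3$ is fine and matches the paper's approach (your equidistance count is a clean way to pin down $|C|\in\{3,6\}$ and $m=3$).

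The gap is your treatment of $m=2$. The sentence ``since $K_{1,2}$ is a path it remains only to rule out $m\geqslant 3$'' is a non-sequitur: $K_{1,2}$ is simultaneously a path and a star, and the claim asks you to rule out \emph{all} stars. More importantly, you cannot rule this case out, because it actually occurs. In the paper's argument the $t=2$ case is analysed in detail: one first notes (via the $\rho$-condition applied to $U=V(T)$, where $|V(T)|=3$ and $\rho(V(T))\geqslant 5$ forces the centre to have a neighbour on $C$) that $d_C(z_0)=1$, hence $|C|\neq 8$ by Claim~\ref{Cldcx1x2}; then for $|C|=3$ and $|C|=6$ the distance constraints from Claim~\ref{Cldcx1x2} collide (the two leaf-images $y_1,y_2$ are forced to coincide), while for $|C|=5$ the constraints $d_C(y_0,y_1)=d_C(y_0,y_2)=2$ and $d_C(y_1,y_2)=1$ determine the picture completely and $G[V(C)\cup V(T)]$ is isomorphic to $L_1$. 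That last outcome is one of the three target configurations of Lemma~\ref{lemma: L123}, so the proof of the claim terminates there with ``as desired''. In other words, Claim~\ref{ClNoStar} is really the assertion ``either $T$ is not a star, or the conclusion of the lemma already holds via $L_1$'', and your proposal misses exactly the branch that produces $L_1$.
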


\begin{proof}
If $|V(T)|=2$, then by Claim \ref{ClLeaf} we have $\rho(V(T))=3$, contradicting (3). So $|V(T)|\geqslant 3$. Now assume that $T$ is a star $K_{1,t}$ with $t\geqslant 2$. Let $z_0$ be the center vertex of $T$ and $z_1,\ldots,z_t$ be its leaves. Then $d_C(z_i)=1$ for any $i$ with $1\leqslant i\leqslant t$. Moreover, by $\rho(V(T))>\frac{3}{2}|V(T)|$, we have that if $t\in\{2,3\}$ then $d_C(z_0)=1$. Denote by $y_i$ the (possible) unique neighbor of $z_i$ in $C$.

Suppose first that $t\geqslant 3$. By Claim \ref{Cldcx1x2}, all pairs of vertices $y_i,y_j$ with $1\leqslant i<j\leqslant t$ have the same distance in $C$. This implies that $t=3$ and $|C|\in \{3,6\}$. Now $d_C(z_0)=1$. By Claim \ref{Cldcx1x2}, 
all pairs of vertices $y_0,y_i$ with $1\leqslant i\leqslant 3$ have the same distance in $C$, which is impossible.

Now suppose that $t=2$. Then $d_C(z_0)=1$ and $|C|\neq 8$ by Claim \ref{Cldcx1x2}. If $|C|=3$, then by Claim \ref{Cldcx1x2}, $d_C(y_0,y_1)=d_C(y_0,y_2)=0$, implying that $d_C(y_1,y_2)=0$, contradicting Claim \ref{Cldcx1x2}. If $|C|=5$, then $d_C(y_0,y_1)=d_C(y_0,y_2)=2$ and $d_C(y_1,y_2)=1$. One can see that $G[V(C)\cup V(T)]$ is isomorphic to $L_1$, as desired. If $|C|=6$, then $d_C(y_0,y_1)=d_C(y_0,y_2)=3$, implying that $d_C(y_1,y_2)=0$, contradicting Claim \ref{Cldcx1x2}.
\end{proof}

Set $C=x_1x_2\ldots x_{|C|}x_1$, and let $P=z_1z_2\ldots z_p$ be a longest path in $T$. By Claim \ref{ClNoStar}, we have $p\geqslant 4$.

\begin{claim}\label{Cldy2}
$d_G(z_2)=3$ and $d_G(z_{p-1})=3$.
\end{claim}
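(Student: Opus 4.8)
The plan is to establish $d_G(z_2)=3$ first, and then to obtain $d_G(z_{p-1})=3$ by applying the very same argument to the reversed longest path $z_pz_{p-1}\cdots z_1$, in which $z_p,z_{p-1},z_{p-2}$ play the roles of $z_1,z_2,z_3$. For the lower bound, note that $z_1$ is a leaf of $T$, so $d_G(z_1)=2$ and $N_G(z_1)=\{z_2,y_1\}$ with $y_1\in V(C)$ by Claim~\ref{ClLeaf}; hence $\rho(\{z_1,z_2\})=d_G(z_1)+d_G(z_2)-1=d_G(z_2)+1$, and by hypothesis~(3) this exceeds $\lfloor\tfrac32\cdot2\rfloor=3$, giving $d_G(z_2)\geqslant3$.

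For the upper bound, suppose $d_G(z_2)\geqslant4$ and aim for a contradiction. Since $P$ is a longest path of $T$, every neighbour of $z_2$ in $V(T)\setminus\{z_3\}$ is a leaf of $T$ (otherwise $P$ could be lengthened through it), and each such leaf has degree $2$ in $G$ with a unique neighbour on $C$ by Claim~\ref{ClLeaf}; also $d_C(z_2)\leqslant1$ by Claim~\ref{claim: at most one neighbor}. Thus $z_2$ determines at least three \emph{attachments} to $C$ — the $C$-neighbour of each leaf-neighbour of $z_2$, together with the $C$-neighbour of $z_2$ itself if $d_C(z_2)=1$ — and either $z_2$ has at least two leaf-neighbours with $d_C(z_2)=1$, or $z_2$ has at least three leaf-neighbours. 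Two leaf-neighbours of $z_2$ are at $T$-distance $2$ while $z_2$ is at $T$-distance $1$ from each, so Claim~\ref{Cldcx1x2} dictates all pairwise $C$-distances among the attachments; recalling $|C|\in\{3,5,6,8\}$, this forbids $|C|=8$ outright, and a short check shows that for $|C|\in\{3,6\}$ one must have $d_C(z_2)=0$ with exactly three leaf-neighbours whose $C$-neighbours are the three vertices of $C$ (when $|C|=3$) or an alternating triple of $C$ (when $|C|=6$), while for $|C|=5$ one must have $d_C(z_2)=1$ with exactly two leaf-neighbours whose $C$-neighbours are the two vertices at distance $2$ from the $C$-neighbour of $z_2$. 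In every surviving configuration the attachments occupy a rigidly prescribed set of positions on $C$; moreover Claim~\ref{Cldcx1x2} applied to $z_3$ (which is at $T$-distance $1$ from $z_2$ and $2$ from each leaf-neighbour) shows that $z_3$ has no neighbour on $C$.

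Now let $T_3$ be the component of $T-z_2$ containing $z_3$. It also contains $z_4$, so $|V(T_3)|\geqslant2$; its only edge to $V(G)\setminus(V(T_3)\cup V(C))$ is $z_2z_3$, so $\rho(V(T_3))=(|V(T_3)|-1)+1+e$, where $e$ is the number of edges between $V(T_3)$ and $V(C)$, and hypothesis~(3) forces $e\geqslant2$. Pick $u\in V(T_3)$ with a neighbour $c_u\in V(C)$; since $z_3$ has no neighbour on $C$, $u\neq z_3$, so $\delta:=d_T(u,z_3)\geqslant1$, and the $T$-path from $u$ through $z_3$ to $z_2$ avoids $V(C)$ and avoids all leaf-neighbours of $z_2$ (which lie in other components of $T-z_2$). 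For each attachment $b$ of $z_2$ we build a cycle: traverse $c_u\,u\cdots z_3\,z_2$ (length $\delta+2$), then pass from $z_2$ to $b$ either along the edge $z_2b$ (if $b$ is the $C$-neighbour of $z_2$) or along $z_2\,a\,b$ for the leaf-neighbour $a$ of $z_2$ with $C$-neighbour $b$, and finally return from $b$ to $c_u$ along an arc of $C$ (of length $d_C(b,c_u)$ or $|C|-d_C(b,c_u)$, and empty when $b=c_u$). Letting $b$ range over the (at least three) attachments, the arc over its two choices, and $c_u$ over its possible positions on $C$, the rigid placement of the attachments guarantees that the resulting cycle lengths hit all three residues modulo $3$; hence one of these cycles is a $(1\bmod 3)$-cycle, contradicting hypothesis~(1). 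Therefore $d_G(z_2)=3$, and $d_G(z_{p-1})=3$ follows symmetrically.

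The step I expect to be the main obstacle is the final residue count together with the configuration bookkeeping that feeds it: one must carefully check, in each of the few surviving cases ($|C|\in\{3,5,6\}$, each position of $c_u$ on $C$, each attachment, and each arc orientation), that all three residues modulo $3$ really do appear among the constructed cycles. Equivalently, this last step can be carried out through Lemma~\ref{LePQ1Q2Q3}, taking a suitable edge of $C$ as the path $P$ and three of the routes above as $Q_1,Q_2,Q_3$.
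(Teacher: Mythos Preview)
Your lower bound and the configuration analysis (forcing $|C|\in\{3,5,6\}$ with the specific attachment patterns) are correct, and your cycle-building scheme does succeed for $|C|=3$ and $|C|=6$. However, there is a genuine gap in the $|C|=5$ case.

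With $|C|=5$ you correctly deduce that $d_C(z_2)=1$ and that the three attachments are the $C$-neighbour $y_2$ of $z_2$ together with the two vertices of $C$ at distance~$2$ from $y_2$; say $C=x_1x_2x_3x_4x_5x_1$ with $y_2=x_3$, $y_1=x_1$, $y'_1=x_5$. Now take $c_u=x_3$. Your cycles have lengths
\[
\delta+3\quad(b=x_3),\qquad \delta+6\text{ or }\delta+7\quad(b=x_1),\qquad \delta+6\text{ or }\delta+7\quad(b=x_5),
\]
and modulo $3$ these cover only the two classes $\delta$ and $\delta+1$; the residue $\delta+2$ is never attained. (Equivalently, the $(z_2,x_3)$-paths $z_2x_3$, $z_2z_1x_1\,(\text{arc})\,x_3$, $z_2z'_1x_5\,(\text{arc})\,x_3$ have lengths in $\{1,4,5\}$, hitting only $1,2\bmod 3$, so Lemma~\ref{LePQ1Q2Q3} does not apply either.) Crucially, $c_u=x_3$ is not merely one possibility but the \emph{only} one: applying Lemma~\ref{LePQ1Q2Q3} with the three $(z_2,x_i)$-paths built from two attachments plus an arc shows that $x_2,x_4$ have no neighbours in $T$ and that $x_1,x_5$ have no neighbours in $V(T)\setminus\{z_1,z'_1\}$, so every edge from $T_3$ to $C$ lands at $x_3$. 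Hence your residue claim fails precisely in the case that actually occurs.

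The paper handles $|C|=5$ by a different mechanism. After establishing $N_C(V(T)\setminus\{z_1,z'_1\})=\{x_3\}$, it looks at the \emph{far end} of $P$: either $d_T(z_{p-1})\geqslant 3$ yields two leaves $z_p,z'_p$ (at $T$-distance~$2$) both attaching at $x_3$, or $d_T(z_{p-1})=2$ forces $z_{p-1}$ itself to attach at $C$, so $z_{p-1},z_p$ (at $T$-distance~$1$) both attach at $x_3$; either way two nearby vertices of $T$ share the $C$-neighbour $x_3$, giving $d_C=0$ in contradiction with the table of Claim~\ref{Cldcx1x2}. Your single-attachment vertex $u$ cannot reproduce this; you need two attachments at bounded $T$-distance, which is exactly what the structure at $z_{p-1}$ provides.
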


\begin{proof}
By $\rho(\{z_1,z_2\})>3$, we have $d_G(z_2)\geqslant 3$. If $d_T(z_2)=2$, then $d_G(z_2)=3$ by Claim \ref{claim: at most one neighbor} and the first assertion holds. Now we assume that $d_T(z_2)\geqslant 3$. Since $P$ is a longest path in $T$, each vertex in $N_T(z_2)\backslash\{z_3\}$ is a leaf of $T$. 

Suppose first that $d_T(z_2)\geqslant 4$. There are three leaves of $T$, say $z_1,z'_1,z''_1$, adjacent to $z_2$. Let $y_1,y'_1,y''_1$ be the neighbors of $z_1,z'_1,z''_1$, respectively, in $C$. By Claim \ref{Cldcx1x2}, every two vertices of $y_1,y'_1,y''_1$ have the same distance in $C$. This holds only when $|C|\in \{3,6\}$. If $|C|=3$, then we can assume w.l.o.g. that $x_1=y_1$, $x_2=y'_1$, $x_3=y''_1$. Recall that $z_p$ is also a leaf of $T$. We assume w.l.o.g. that $x_1$ is the unique neighbor of $z_p$ in $C$. Now $z_2z_1x_1$, $z_2z'_1x_2x_1$, $z_2z''_1x_3x_2x_1$ are three $(z_2,x_1)$-paths internally-disjoint with $P[z_2,z_p]z_px_1$, and of lengths 2, 3, 4, respectively. By Lemma \ref{LePQ1Q2Q3}, there is a $(1\bmod 3)$-cycle, contradicting (1). 

If $|C|=6$, then we can assume w.l.o.g. that $x_1=y_1$, $x_3=y'_1$, $x_5=y''_1$. Let $y_p$ the unique neighbor of $z_p$ in $C$. We assume w.l.o.g. that $y_p\in\{x_1,x_2\}$. If $y_p=x_1$, then $z_2z_1x_1$, $z_2z'_1x_3x_2x_1$, $z_2z'_1x_3x_4x_5x_6x_1$ are three $(z_2,x_1)$-paths internally-disjoint with $P[z_2,z_p]z_px_1$, and of lengths 2, 4, 6, respectively; if $y_p=x_2$, then $z_2z_1x_1x_2$, $z_2z''_1x_5x_6x_1x_2$, $z_2z_1x_1x_6x_5x_4x_3x_2$ are three $(z_2,x_2)$-paths internally-disjoint with $P[z_2,z_p]z_px_2$, and of lengths 3, 5, 7, respectively. By Lemma \ref{LePQ1Q2Q3}, there is a $(1\bmod 3)$-cycle, contradicting (1). 

Suppose now that $d_T(z_2)=3$. 
If $d_C(z_2)=0$, then $d_G(z_2)=3$ and we are done. 
So we assume that $d_C(z_2)=1$. 
Set $N_T(z_2)=\{z_1,z'_1,z_3\}$ and let $y_1,y'_1,y_2$ be the neighbors of $z_1,z'_1,z_2$, respectively, in $C$. 
If $|C|=3$, then $d_C(y_1,y_2)=d_C(y'_1,y_2)=0$, 
implying that $d_C(y_1,y'_1)=0$, a contradiction. 
If $|C|=6$, then $d_C(y_1,y_2)=d_C(y'_1,y_2)=3$, 
implying that $d_C(y_1,y'_1)=0$, a contradiction. 
If $|C|=8$, then $z_2$ has no neighbor in $C$, a contradiction. 
So we conclude that $|C|=5$, and then $d_C(y_1,y_2)=d_C(y'_1,y_2)=2$ and $d_C(y_1,y'_1)=1$. 
We assume w.l.o.g. that $x_1=y_1$, $x_3=y_2$, $x_5=y'_1$, 
see Figure \ref{figure: 5-cycle}. 

\begin{figure}[ht]
\centering
\begin{tikzpicture}[scale=0.6]
\foreach \x in {1,2,...,5}
{\coordinate (x\x) at (162-72*\x:2);
\draw[fill=black](x\x) circle (0.1);
\node at (162-72*\x:1.5) {$x_\x$};}
\draw[thick](x1)--(x2)--(x3)--(x4)--(x5)--(x1); 
\coordinate (z2) at (4,1);
\draw[fill=black](z2){node[right]{$z_2$}} circle (0.1);
\coordinate (zp1) at (4,-1);
\draw[fill=black](zp1){node[right]{$z_{p-1}$}} circle (0.1);
\draw[dotted,thick](z2)--(zp1); 
\coordinate (z1) at (3,2.5); \coordinate (z1') at (5,2.5);
\draw[fill=black](z1){node[right]{$z_1$}} circle (0.1);
\draw[fill=black](z1'){node[right]{$z'_1$}} circle (0.1);
\draw[thick](z1)--(z2)--(z1'); 
\coordinate (zp) at (4,-2.5);
\draw[fill=black](zp){node[right]{$z_p$}} circle (0.1);
\draw[thick](zp1)--(zp); 
\draw(z1)--(x1); 
\path(z1') edge [bend right=60] (x5);
\path(z2) edge [bend left=10] (x3);
\draw(zp1)--(x3)--(zp);
\foreach \x in {1,2,4,5} \draw[fill=white](x\x) circle (0.1);
\end{tikzpicture}

\caption{$|C|=5$, $d_T(z_2)=3$ and $d_C(z_2)=1$.}
\label{figure: 5-cycle}
\end{figure}
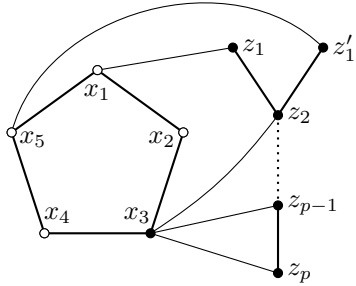

If $x_2$ has a neighbor $z_0$ in $T$, then let $Q$ be the path from $z_2$ to $z_0$ in $T$. Now $z_2x_3x_2$, $z_2z_1x_1x_2$, $z_2z'_1x_5x_1x_2$ are three $(z_2,x_2)$-paths internally-disjoint with $z_2Qz_0x_2$, and of lengths 2, 3, 4, respectively, a contradiction. So we conclude that $x_2$, and similarly, $x_4$, has no neighbors in $T$. If $x_1$ has a neighbor $z_0$ in $V(T)\backslash\{z_1\}$, then let $Q$ be the path from $z_2$ to $z_0$ in $T$. Now $z_2z_1x_1$, $z_2z'_1x_5x_1$, $z_2x_3x_4x_5x_1$ are three $(z_2,z_0)$-paths internally-disjoint with $z_2Qz_0x_1$, and of lengths 2, 3, 4, respectively, a contradiction. So we conclude that $x_1$, and similarly, $x_5$, has no neighbors in $V(T)\backslash\{z_1,z'_1\}$. Thus $N_C(V(T)\backslash\{z_1,z'_1\})=\{x_3\}$. We remark that in Figures \ref{figure: 5-cycle}-\ref{figure: 8-cycle}, hollow points represent vertices of $C$ that have no neighbors in $T$ other than $z_1,z'_1$ or $z_2$. 

If $d_T(z_{p-1})\geqslant 3$, 
then $T$ has at least two leaves adjacent to $z_{p-1}$, say $z_p,z'_p$. 
It follows that $x_3$ is adjacent to $z_p$ and $z'_p$,
contradicting Claim \ref{Cldcx1x2}. 
If $d_T(z_{p-1})=2$, 
then $z_{p-1}$ has a neighbor in $C$. 
It follows that $x_3$ is adjacent to $z_p$ and $z_{p-1}$, contradicting Claim \ref{Cldcx1x2}. 

The second assertion can be proved similarly.
\end{proof}

By Claim \ref{Cldy2}, 
for $z_2$,
either (i) $d_T(z_2)=2$, $d_C(z_2)=1$, 
or (ii) $d_T(z_2)=3$, $d_C(z_2)=0$; 
and for $z_{p-1}$,
either (i) $d_T(z_{p-1})=2$, $d_C(z_{p-1})=1$, 
or (ii) $d_T(z_{p-1})=3$, $d_C(z_{p-1})=0$. 
Let $y_1$ the neighbor of $z_1$ in $C$
and let $y_p$ be the neighbor of $z_p$ in $C$. 
If $d_T(z_2)=2$, 
then let $y_2$ be the neighbor of $z_2$ in $C$; 
and if $d_T(z_2)=3$, 
then let $z'_1$ be the neighbor of $z_2$ in $T$ other than $z_1,z_3$ and let $y'_1$ be the neighbor of $z'_1$ in $C$. 
If $d_T(z_{p-1})=2$, 
then let $y_{p-1}$ be the neighbor of $z_{p-1}$ in $C$; 
and if $d_T(z_{p-1})=3$, 
then let $z'_p$ be the neighbor of $z_{p-1}$ in $T$ other than $z_p,z_{p-2}$ and let $y'_p$ be the neighbor of $z'_p$ in $C$. 
Notice that $V(P)\cup N_T(z_2)\cup N_T(z_{p-1})$ induces a subgraph of four possible types, namely, Type-I, Y, \rotatebox[origin=c]{180}{Y} or X, 
see Figure \ref{figure: IXY}.  

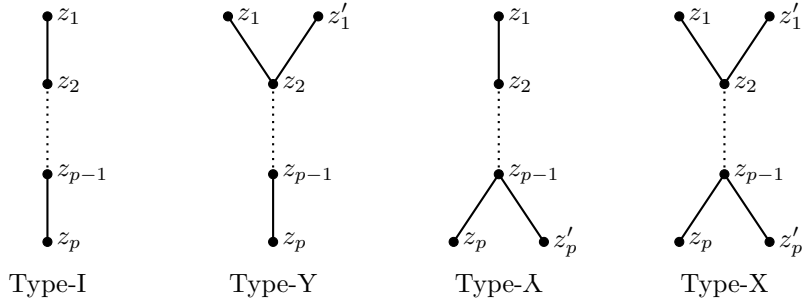
\begin{figure}[ht]
\centering
\begin{tikzpicture}[scale=0.6]

\begin{scope}
\coordinate (z2) at (0,1);
\draw[fill=black](z2){node[right]{$z_2$}} circle (0.1);
\coordinate (zp1) at (0,-1);
\draw[fill=black](zp1){node[right]{$z_{p-1}$}} circle (0.1);
\draw[dotted,thick](z2)--(zp1); 
\coordinate (z1) at (0,2.5);
\draw[fill=black](z1){node[right]{$z_1$}} circle (0.1);
\draw[thick](z1)--(z2); 
\coordinate (zp) at (0,-2.5);
\draw[fill=black](zp){node[right]{$z_p$}} circle (0.1);
\draw[thick](zp1)--(zp); 
\node[below] at (0,-3) {Type-I};
\end{scope}

\begin{scope}[xshift=5cm]
\coordinate (z2) at (0,1);
\draw[fill=black](z2){node[right]{$z_2$}} circle (0.1);
\coordinate (zp1) at (0,-1);
\draw[fill=black](zp1){node[right]{$z_{p-1}$}} circle (0.1);
\draw[dotted,thick](z2)--(zp1); 
\coordinate (z1) at (-1,2.5); \coordinate (z1') at (1,2.5);
\draw[fill=black](z1){node[right]{$z_1$}} circle (0.1);
\draw[fill=black](z1'){node[right]{$z'_1$}} circle (0.1);
\draw[thick](z1)--(z2)--(z1'); 
\coordinate (zp) at (0,-2.5);
\draw[fill=black](zp){node[right]{$z_p$}} circle (0.1);
\draw[thick](zp1)--(zp); 
\node[below] at (0,-3) {Type-Y};
\end{scope}

\begin{scope}[xshift=10cm]
\coordinate (z2) at (0,1);
\draw[fill=black](z2){node[right]{$z_2$}} circle (0.1);
\coordinate (zp1) at (0,-1);
\draw[fill=black](zp1){node[right]{$z_{p-1}$}} circle (0.1);
\draw[dotted,thick](z2)--(zp1); 
\coordinate (z1) at (0,2.5);
\draw[fill=black](z1){node[right]{$z_1$}} circle (0.1);
\draw[thick](z1)--(z2); 
\coordinate (zp) at (-1,-2.5); \coordinate (zp') at (1,-2.5);
\draw[fill=black](zp){node[right]{$z_p$}} circle (0.1);
\draw[fill=black](zp'){node[right]{$z'_p$}} circle (0.1);
\draw[thick](zp)--(zp1)--(zp'); 
\node[below] at (0,-3) {Type-\rotatebox[origin=c]{180}{Y}};
\end{scope}

\begin{scope}[xshift=15cm]
\coordinate (z2) at (0,1);
\draw[fill=black](z2){node[right]{$z_2$}} circle (0.1);
\coordinate (zp1) at (0,-1);
\draw[fill=black](zp1){node[right]{$z_{p-1}$}} circle (0.1);
\draw[dotted,thick](z2)--(zp1); 
\coordinate (z1) at (-1,2.5); \coordinate (z1') at (1,2.5);
\draw[fill=black](z1){node[right]{$z_1$}} circle (0.1);
\draw[fill=black](z1'){node[right]{$z'_1$}} circle (0.1);
\draw[thick](z1)--(z2)--(z1'); 
\coordinate (zp) at (-1,-2.5); \coordinate (zp') at (1,-2.5);
\draw[fill=black](zp){node[right]{$z_p$}} circle (0.1);
\draw[fill=black](zp'){node[right]{$z'_p$}} circle (0.1);
\draw[thick](zp)--(zp1)--(zp');
\node[below] at (0,-3) {Type-X};
\end{scope}

\end{tikzpicture}
\caption{Four types of $G[V(P)\cup N_T(z_2)\cup N_T(z_{p-1})]$.}\label{figure: IXY}
\end{figure}

Now we distinguish the following cases according to the length of $C$. 

\setcounter{case}{0}
\begin{case}
$|C|=3$.  
\end{case}

\begin{subcase}
$d_T(z_2)=3$.
\end{subcase}

Recall that $d_C(y_1,y'_1)=1$, 
we can assume w.l.o.g. that $y_1=x_1$, $y'_1=x_2$, 
see Figure \ref{figure: 3-cycle} (i).
If $x_1$ has a neighbor $z_0$ in $T$ other than $z_1$, 
then let $Q$ be the path from $z_2$ to $z_0$ in $T$. 
Now $z_2z_1x_1$, $z_2z'_1x_2x_1$, $z_2z'_1x_2x_3x_1$ are three $(z_2,x_1)$-paths internally-disjoint with $Qz_0x_1$, 
and of lengths 2, 3, 4, respectively, a contradiction. 
So we conclude that $x_1$, and similarly, $x_2$,
has no neighbors in $V(T)\backslash\{z_1,z'_1\}$. 
It follows that $N_C(V(T)\backslash\{z_1,z'_1\})=\{x_3\}$.

\begin{figure}[ht]
\centering
\begin{tikzpicture}[scale=0.6]
\begin{scope}
\foreach \x in {1,2,3}
{\coordinate (x\x) at (210-120*\x:2);
\draw[fill=black](x\x) circle (0.1);
\node at (210-120*\x:1.4) {$x_\x$};}
\draw[thick](x1)--(x2)--(x3)--(x1); 

\coordinate (z2) at (4,1);
\draw[fill=black](z2){node[right]{$z_2$}} circle (0.1);
\coordinate (zp1) at (4,-1);
\draw[fill=black](zp1){node[right]{$z_{p-1}$}} circle (0.1);
\draw[dotted,thick](z2)--(zp1); 
\coordinate (z1) at (3,2.5); \coordinate (z1') at (5,2.5);
\draw[fill=black](z1){node[right]{$z_1$}} circle (0.1);
\draw[fill=black](z1'){node[right]{$z'_1$}} circle (0.1);
\draw[thick](z1)--(z2)--(z1'); 
\coordinate (zp) at (4,-2.5);
\draw[fill=black](zp){node[right]{$z_p$}} circle (0.1);
\draw[thick](zp1)--(zp); 
\draw(z1)--(x1);
\draw(z1').. controls (2.5,4.5) and (1,1.5)..(x2);
\path(zp1) edge [bend left=30] (x3);
\path(zp) edge [bend left=30] (x3);
\foreach \x in {1,2} \draw[fill=white](x\x) circle (0.1);
\node[below] at (2,-3){(i) $d_T(z_2)=3$.};
\end{scope}

\begin{scope}[xshift=10cm]
\foreach \x in {1,2,3}
{\coordinate (x\x) at (210-120*\x:2);
\draw[fill=black](x\x) circle (0.1);
\node at (210-120*\x:1.4) {$x_\x$};}
\draw[thick](x1)--(x2)--(x3)--(x1); 
\coordinate (z2) at (4,1);
\draw[fill=black](z2){node[right]{$z_2$}} circle (0.1);
\coordinate (zp1) at (4,-1);
\draw[fill=black](zp1){node[right]{$z_{p-1}$}} circle (0.1);
\draw[dotted,thick](z2)--(zp1); 
\coordinate (z1) at (4,2.5);
\draw[fill=black](z1){node[right]{$z_1$}} circle (0.1);
\draw[thick](z1)--(z2); 
\coordinate (zp) at (4,-2.5);
\draw[fill=black](zp){node[right]{$z_p$}} circle (0.1);
\draw[thick](zp1)--(zp); 
\draw(z1)--(x1)--(z2);
\path(zp1) edge [bend right=10] (x1);
\path(zp) edge [bend right=10] (x1);
\foreach \x in {2,3} \draw[fill=white](x\x) circle (0.1);
\node[below] at (1.5,-3){(ii) $d_T(z_2)=2$.};
\end{scope}
\end{tikzpicture}

\caption{$|C|=3$.}
\label{figure: 3-cycle}
\end{figure}

If $d_T(z_{p-1})=3$, then $y_p=y'_p=x_3$, contradicting Claim \ref{Cldcx1x2}. So we conclude that $d_T(z_{p-1})=2$, and thus $y_{p-1}=y_p=x_3$. Now $z_2z_1x_1x_3z_{p-1}$, $z_2z_1x_1x_2x_3z_{p-1}$, $z_2z_1x_1x_2x_3z_pz_{p-1}$ are three $(z_2,z_{p-1})$-paths internally-disjoint with $P[z_2,z_{p-1}]$, and of lengths 4, 5, 6, respectively, a contradiction.

\begin{subcase}
$d_T(z_2)=2$.
\end{subcase}

Recall that $d_C(y_1,y_2)=0$, we can assume w.l.o.g. that $y_1=y_2=x_1$,
see Figure \ref{figure: 3-cycle} (ii).
If $x_2$ has a neighbor $z_0$ in $T$, then let $Q$ be the path from $z_2$ to $z_0$ in $T$. Note that by Claim \ref{claim: at most one neighbor} we have $z_0\notin \{z_1,z_2\}$. Now $z_2x_1x_2$, $z_2z_1x_1x_2$, $z_2z_1x_1x_3x_2$ are three $(z_2,x_2)$-paths internally-disjoint with $Qz_0x_2$, and of lengths 2, 3, 4, respectively, a contradiction. So we conclude that $x_2$, and similarly, $x_3$, has no neighbors in $T$. It follows that $N_C(T)=\{x_1\}$, and $x_1$ is a cut-vertex of $G$, a contradiction to the assumption that $G$ is 2-connected.

\begin{case}
$|C|=5$.  
\end{case}

\begin{subcase}\label{Ca2.1}
$d_T(z_2)=3$.   
\end{subcase}

Recall that $d_C(y_1,y'_1)=1$, we can assume w.l.o.g. that $y_1=x_1,~y'_1=x_2$,
see Figure \ref{figure: 5-cycle-(1)}.
If $x_3$ has a neighbor $z_0$ in $T$, then let $Q$ be the path from $z_2$ to $z_0$ in $T$. Now $z_2z'_1x_2x_3$, $z_2z_1x_1x_2x_3$, $z_2z_1x_1x_5x_4x_3$ are three $(z_2,x_3)$-paths internally-disjoint with $Qz_0x_3$, and of lengths 3, 4, 5, respectively, a contradiction. So we conclude that $x_3$, and similarly, $x_5$, has no neighbors in $V(T)$. It follows that $N_C(T)\subseteq\{x_1,x_2,x_4\}$.

\begin{figure}[ht]
\centering
\begin{tikzpicture}[scale=0.6]
\begin{scope}
\foreach \x in {1,2,...,5}
{\coordinate (x\x) at (162-72*\x:2);
\draw[fill=black](x\x) circle (0.1);
\node at (162-72*\x:1.5) {$x_\x$};}
\draw[thick](x1)--(x2)--(x3)--(x4)--(x5)--(x1); 
\coordinate (z2) at (4,1);
\draw[fill=black](z2){node[right]{$z_2$}} circle (0.1);
\coordinate (zp1) at (4,-1);
\draw[fill=black](zp1){node[right]{$z_{p-1}$}} circle (0.1);
\draw[dotted,thick](z2)--(zp1); 
\coordinate (z1) at (3,2.5); \coordinate (z1') at (5,2.5);
\draw[fill=black](z1){node[right]{$z_1$}} circle (0.1);
\draw[fill=black](z1'){node[right]{$z'_1$}} circle (0.1);
\draw[thick](z1)--(z2)--(z1'); 
\coordinate (zp) at (3,-2.5); \coordinate (zp') at (5,-2.5);
\draw[fill=black](zp){node[right]{$z_p$}} circle (0.1);
\draw[fill=black](zp'){node[right]{$z'_p$}} circle (0.1);
\draw[thick](zp)--(zp1)--(zp'); 
\draw(z1)--(x1);
\draw(z1').. controls (2.5,4) and (2,3.5)..(x2);
\draw(zp).. controls (3.5,0) and (2.5,2)..(x1);
\draw(zp').. controls (1.5,-5) and (2.5,-1)..(x2);
\foreach \x in {3,5} \draw[fill=white](x\x) circle (0.1);
\node[below] at (2,-3.5){(i) $d_T(z_{p-1})=2$};
\end{scope}

\begin{scope}[xshift=10cm]
\foreach \x in {1,2,...,5}
{\coordinate (x\x) at (162-72*\x:2);
\draw[fill=black](x\x) circle (0.1);
\node at (162-72*\x:1.5) {$x_\x$};}
\draw[thick](x1)--(x2)--(x3)--(x4)--(x5)--(x1); 
\coordinate (z2) at (4,1);
\draw[fill=black](z2){node[right]{$z_2$}} circle (0.1);
\coordinate (zp1) at (4,-1);
\draw[fill=black](zp1){node[right]{$z_{p-1}$}} circle (0.1);
\draw[dotted,thick](z2)--(zp1); 
\coordinate (z1) at (3,2.5); \coordinate (z1') at (5,2.5);
\draw[fill=black](z1){node[right]{$z_1$}} circle (0.1);
\draw[fill=black](z1'){node[right]{$z'_1$}} circle (0.1);
\draw[thick](z1)--(z2)--(z1'); 
\coordinate (zp) at (4,-2.5);
\draw[fill=black](zp){node[right]{$z_p$}} circle (0.1);
\draw[thick](zp1)--(zp); 
\draw(z1)--(x1);
\draw(z1').. controls (2.5,4) and (2,3.5)..(x2);
\path(zp1) edge [bend left=50] (x4);
\draw(zp).. controls (3,-1.5) and (3,2) ..(x1);
\foreach \x in {3,5} \draw[fill=white](x\x) circle (0.1);
\node[below] at (2,-3.5){(ii) $d_T(z_{p-1})=2$, $y_{p-1}=x_4$};
\end{scope}

\begin{scope}[xshift=20cm]
\foreach \x in {1,2,...,5}
{\coordinate (x\x) at (162-72*\x:2);
\draw[fill=black](x\x) circle (0.1);
\node at (162-72*\x:1.5) {$x_\x$};}
\draw[thick](x1)--(x2)--(x3)--(x4)--(x5)--(x1); 
\coordinate (z2) at (4,1);
\draw[fill=black](z2){node[right]{$z_2$}} circle (0.1);
\coordinate (zp1) at (4,-1);
\draw[fill=black](zp1){node[right]{$z_{p-1}$}} circle (0.1);
\draw[dotted,thick](z2)--(zp1); 
\coordinate (z1) at (3,2.5); \coordinate (z1') at (5,2.5);
\draw[fill=black](z1){node[right]{$z_1$}} circle (0.1);
\draw[fill=black](z1'){node[right]{$z'_1$}} circle (0.1);
\draw[thick](z1)--(z2)--(z1'); 
\coordinate (zp) at (4,-2.5);
\draw[fill=black](zp){node[right]{$z_p$}} circle (0.1);
\draw[thick](zp1)--(zp); 
\draw(z1)--(x1);
\draw(z1').. controls (2.5,4) and (2,3.5)..(x2);
\path(zp1) edge [bend right=30] (x1);
\path(zp) edge [bend left=30] (x4);
\foreach \x in {3,5} \draw[fill=white](x\x) circle (0.1);
\node[below] at (2,-3.5){(iii) $d_T(z_{p-1})=2$, $y_{p-1}=x_1$};
\end{scope}

\end{tikzpicture}

\caption{$|C|=5$ and $d_T(z_2)=3$.}\label{figure: 5-cycle-(1)}
\end{figure}
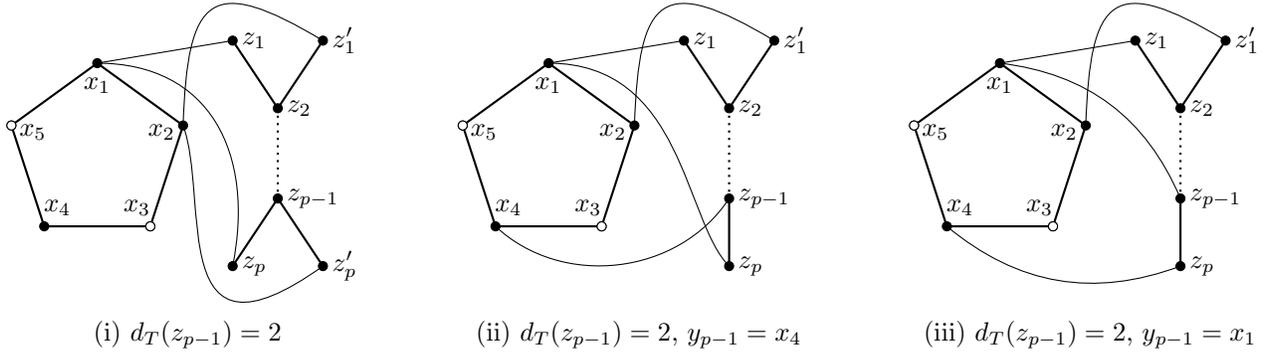


\begin{subsubcase}
$d_T(z_{p-1})=3$.
\end{subsubcase}

Recall that $d_C(y_p,y'_p)=1$ and $N_C(T)\subseteq\{x_1,x_2,x_4\}$. We have that $\{y_p,y'_p\}=\{x_1,x_2\}$, say $y_p=x_1,y'_p=x_2$,
see Figure \ref{figure: 5-cycle-(1)} (i). 
Now $z_2z_1x_1z_pz_{p-1}$, $z_2z'_1x_2x_1z_pz_{p-1}$ are two $(z_2,z_{p-1})$-paths internally-disjoint with $P[z_2,z_{p-1}]$, and of lengths 4, 5, respectively. It follows that $|P[z_2,z_{p-1}]|\equiv 1\bmod 3$ and $|P|\equiv 0\bmod 3$. If $|P|=3$, then $|V(T)|=6$ and $\rho(V(T))=9$, contradicting (3). So we conclude that $|P|\geqslant 6$.

If $d_G(z_3)=2$, then $\rho(\{z_1,z'_1,z_2,z_3\})=6$, a contradiction. 
Thus we assume that $d_G(z_3)\geqslant 3$. 
It follows that there exists a path $Q$ from $z_3$ to $C$ with all internal vertices not in $P$. Let $y_0$ be the terminus of $Q$. Recall that $y_0\in\{x_1,x_2,x_4\}$. 

If $y_0=x_1$, 
then $P[z_3,z_p]z_px_1$, $z_3z_2z_1x_1$, $z_3z_2z'_1x_2x_1$ are three $(z_3,x_1)$-paths internally-disjoint with $Q$, and of lengths $|P[z_3,z_p]|+1$, 3, 4, respectively, 
which are different modulo 3 (notice that $|P[z_3,z_p]|\equiv 1\bmod 3$), a contradiction. 
If $y_0=x_2$, 
then we can prove similarly. 
Now assume that $y_0=x_4$. 
Now $P[z_3,z_p]z_px_1x_5x_4$, $z_3z_2z_1x_1x_5x_4$, $z_3z_2z_1x_1x_2x_3x_4$ are three $(z_3,x_4)$-paths internally-disjoint with $Q$, and of lengths $|P[z_3,z_p]|+3$, 5, 6, respectively, which are different modulo 3, a contradiction.


\begin{subsubcase}
$d_T(z_{p-1})=2$.
\end{subsubcase}

Recall that $d_C(y_{p-1},y_p)=2$ and $N_C(T)\subseteq\{x_1,x_2,x_4\}$. 
We can assume w.l.o.g. that 
$\{y_{p-1},y_p\}=\{x_1,x_4\}$. 
If $y_{p-1}=x_4$, $y_p=x_1$, see Figure \ref{figure: 5-cycle-(1)} (ii), 
then $z_2z_1x_1z_pz_{p-1}$, $z_2z'_1x_2x_1z_pz_{p-1}$, $z_2z_1x_1x_2x_3x_4z_{p-1}$ 
are three $(z_2,z_{p-1})$-paths internally-disjoint with $P[z_2,z_{p-1}]$, 
and of lengths 4, 5, 6, respectively, a contradiction. 
So we conclude that $y_{p-1}=x_1,y_p=x_4$,
see Figure \ref{figure: 5-cycle-(1)} (iii). 

Now $z_2z_1x_1z_{p-1}$, $z_2z'_1x_2x_1z_{p-1}$ are two $(z_2,z_{p-1})$-paths internally-disjoint with $P[z_2,z_{p-1}]$, and of lengths 3, 4, respectively. 
It follows that $|P[z_2,z_{p-1}]|\equiv 2\bmod 3$ and $|P|\equiv 1\bmod 3$. 
By (3), we have that $d_G(z_3)\geqslant 3$. 
It follows that there exists a path $Q$ from $z_3$ to $C$ 
with all internal vertices not in $P$. 
Let $y_0$ be the terminus of $Q$. 
Then $y_0\in\{x_1,x_2,x_4\}$. 

If $y_0=x_1$, 
then $P[z_3,z_{p-1}]z_{p-1}x_1$, $z_3z_2z_1x_1$, $z_3z_2z'_1x_2x_1$ are three $(z_3,x_1)$-paths internally-disjoint with $Q$, and of lengths $|P[z_3,z_{p-1}]|+1$, 3, 4, respectively, which are different modulo 3 (notice that $|P[z_3,z_{p-1}]|\equiv 1\bmod 3$), a contradiction. 
If $y_0=x_2$, 
then $x_2z'_1z_2z_3Qx_2$ and $x_1z_{p-1}z_px_4x_5x_1$ are two disjoint cycles, contradicting (2). 
If $y_0=x_4$, 
then $x_1z_1z_2z'_1x_2x_1$ and $x_4z_pP[z_p,z_3]z_3Qx_4$ are two disjoint cycles, a contradiction.

\begin{subcase}
$d_T(z_2)=2$.  
\end{subcase}

Recall that $d_C(y_1,y_2)=2$. 
We can assume w.l.o.g. that $y_1=x_1$, $y_2=x_3$,
see Figure \ref{figure: 5-cycle-(2)}. 
If $x_1$ has a neighbor $z_0$ in $T$ other than $z_1$, then let $Q$ be the path from $z_2$ to $z_0$ in $T$. Now $z_2z_1x_1$, $z_2x_3x_2x_1$, $z_2x_3x_4x_5x_1$ are three $(z_2,x_1)$-paths internally-disjoint with $Qz_0x_1$, and of lengths 2, 3, 4, respectively, a contradiction. So we conclude that $x_1$ has no neighbors in $V(T)\backslash\{z_1\}$. 

\begin{figure}[ht]
\centering
\begin{tikzpicture}[scale=0.6]
\begin{scope}
\foreach \x in {1,2,...,5}
{\coordinate (x\x) at (162-72*\x:2);
\draw[fill=black](x\x) circle (0.1);
\node at (162-72*\x:1.5) {$x_\x$};}
\draw[thick](x1)--(x2)--(x3)--(x4)--(x5)--(x1); 
\coordinate (z2) at (4,1);
\draw[fill=black](z2){node[right]{$z_2$}} circle (0.1);
\coordinate (zp1) at (4,-1);
\draw[fill=black](zp1){node[right]{$z_{p-1}$}} circle (0.1);
\draw[dotted,thick](z2)--(zp1); 
\coordinate (z1) at (4,2.5);
\draw[fill=black](z1){node[right]{$z_1$}} circle (0.1);
\draw[thick](z1)--(z2); 
\coordinate (zp) at (4,-2.5);
\draw[fill=black](zp){node[right]{$z_p$}} circle (0.1);
\draw[thick](zp1)--(zp); 
\draw(z1)--(x1);
\path(z2) edge [bend left=20] (x3);
\path(zp1) edge [bend left=20] (x3);
\draw(zp)..controls (0,-3) and (-3,-3)..(x5);
\draw[fill=white](x1) circle (0.1);
\node[below] at (1.5,-3){(i) $y_{p-1}=x_3$, $y_p=x_5$};
\end{scope}

\begin{scope}[xshift=10cm]
\foreach \x in {1,2,...,5}
{\coordinate (x\x) at (162-72*\x:2);
\draw[fill=black](x\x) circle (0.1);
\node at (162-72*\x:1.5) {$x_\x$};}
\draw[thick](x1)--(x2)--(x3)--(x4)--(x5)--(x1); 
\coordinate (z2) at (4,1);
\draw[fill=black](z2){node[right]{$z_2$}} circle (0.1);
\coordinate (zp1) at (4,-1);
\draw[fill=black](zp1){node[right]{$z_{p-1}$}} circle (0.1);
\draw[dotted,thick](z2)--(zp1); 
\coordinate (z1) at (4,2.5);
\draw[fill=black](z1){node[right]{$z_1$}} circle (0.1);
\draw[thick](z1)--(z2); 
\coordinate (zp) at (4,-2.5);
\draw[fill=black](zp){node[right]{$z_p$}} circle (0.1);
\draw[thick](zp1)--(zp); 
\draw(z1)--(x1);
\path(z2) edge [bend left=20] (x3);
\draw(zp1)..controls (-1,-4) and (-3,-2)..(x5);
\draw(zp)--(x3);
\draw[fill=white](x1) circle (0.1);
\node[below] at (1.5,-3){(ii) $y_{p-1}=x_5$, $y_p=x_3$};
\end{scope}

\begin{scope}[xshift=20cm]
\foreach \x in {1,2,...,5}
{\coordinate (x\x) at (162-72*\x:2);
\draw[fill=black](x\x) circle (0.1);
\node at (162-72*\x:1.5) {$x_\x$};}
\draw[thick](x1)--(x2)--(x3)--(x4)--(x5)--(x1); 
\coordinate (z2) at (4,1);
\draw[fill=black](z2){node[right]{$z_2$}} circle (0.1);
\coordinate (zp1) at (4,-1);
\draw[fill=black](zp1){node[right]{$z_{p-1}$}} circle (0.1);
\draw[dotted,thick](z2)--(zp1); 
\coordinate (z1) at (4,2.5);
\draw[fill=black](z1){node[right]{$z_1$}} circle (0.1);
\draw[thick](z1)--(z2); 
\coordinate (zp) at (4,-2.5);
\draw[fill=black](zp){node[right]{$z_p$}} circle (0.1);
\draw[thick](zp1)--(zp); 
\draw(z1)--(x1);
\path(z2) edge [bend left=20] (x3);
\draw(zp1)--(x2);
\draw(zp)..controls (0,-3) and (-3,-3)..(x5);
\draw[fill=white](x1) circle (0.1);
\node[below] at (1.5,-3){(iii) $y_{p-1}=x_2$, $y_p=x_5$};
\end{scope}

\begin{scope}[yshift=-7.5cm]
\foreach \x in {1,2,...,5}
{\coordinate (x\x) at (162-72*\x:2);
\draw[fill=black](x\x) circle (0.1);
\node at (162-72*\x:1.5) {$x_\x$};}
\draw[thick](x1)--(x2)--(x3)--(x4)--(x5)--(x1); 
\coordinate (z2) at (4,1);
\draw[fill=black](z2){node[right]{$z_2$}} circle (0.1);
\coordinate (zp1) at (4,-1);
\draw[fill=black](zp1){node[right]{$z_{p-1}$}} circle (0.1);
\draw[dotted,thick](z2)--(zp1); 
\coordinate (z1) at (4,2.5);
\draw[fill=black](z1){node[right]{$z_1$}} circle (0.1);
\draw[thick](z1)--(z2); 
\coordinate (zp) at (4,-2.5);
\draw[fill=black](zp){node[right]{$z_p$}} circle (0.1);
\draw[thick](zp1)--(zp); 
\draw(z1)--(x1);
\path(z2) edge [bend left=20] (x3);
\path(zp1) edge [bend left=40] (x4);
\path(zp) edge [bend left=20] (x2);
\draw[fill=white](x1) circle (0.1);
\node[below] at (1.5,-3){(iv) $y_{p-1}=x_4$, $y_p=x_2$};
\end{scope}

\begin{scope}[xshift=10cm,yshift=-7.5cm]
\foreach \x in {1,2,...,5}
{\coordinate (x\x) at (162-72*\x:2);
\draw[fill=black](x\x) circle (0.1);
\node at (162-72*\x:1.5) {$x_\x$};}
\draw[thick](x1)--(x2)--(x3)--(x4)--(x5)--(x1); 
\coordinate (z2) at (4,1);
\draw[fill=black](z2){node[right]{$z_2$}} circle (0.1);
\coordinate (zp1) at (4,-1);
\draw[fill=black](zp1){node[right]{$z_{p-1}$}} circle (0.1);
\draw[dotted,thick](z2)--(zp1); 
\coordinate (z1) at (4,2.5);
\draw[fill=black](z1){node[right]{$z_1$}} circle (0.1);
\draw[thick](z1)--(z2); 
\coordinate (zp) at (4,-2.5);
\draw[fill=black](zp){node[right]{$z_p$}} circle (0.1);
\draw[thick](zp1)--(zp); 
\draw(z1)--(x1);
\path(z2) edge [bend left=20] (x3);
\draw(zp1)..controls (-1,-4) and (-3,-2)..(x5);
\path(zp) edge [bend left=20] (x2);
\draw[fill=white](x1) circle (0.1);
\node[below] at (1.5,-3){(v) $y_{p-1}=x_5$, $y_p=x_2$};
\end{scope}

\begin{scope}[xshift=20cm,yshift=-7.5cm]
\foreach \x in {1,2,...,5}
{\coordinate (x\x) at (162-72*\x:2);
\draw[fill=black](x\x) circle (0.1);
\node at (162-72*\x:1.5) {$x_\x$};}
\draw[thick](x1)--(x2)--(x3)--(x4)--(x5)--(x1); 
\coordinate (z2) at (4,1);
\draw[fill=black](z2){node[right]{$z_2$}} circle (0.1);
\coordinate (zp1) at (4,-1);
\draw[fill=black](zp1){node[right]{$z_{p-1}$}} circle (0.1);
\draw[dotted,thick](z2)--(zp1); 
\coordinate (z1) at (4,2.5);
\draw[fill=black](z1){node[right]{$z_1$}} circle (0.1);
\draw[thick](z1)--(z2); 
\coordinate (zp) at (4,-2.5);
\draw[fill=black](zp){node[right]{$z_p$}} circle (0.1);
\draw[thick](zp1)--(zp); 
\draw(z1)--(x1);
\path(z2) edge [bend left=20] (x3);
\draw(zp1)--(x2);
\path(zp) edge [bend left=20] (x4);
\draw[fill=white](x1) circle (0.1);
\node[below] at (1.5,-3){(vi) $y_{p-1}=x_2$, $y_p=x_4$};
\end{scope}

\end{tikzpicture}
\caption{$|C|=5$ and $d_T(z_2)=2$.}
\label{figure: 5-cycle-(2)}
\end{figure}
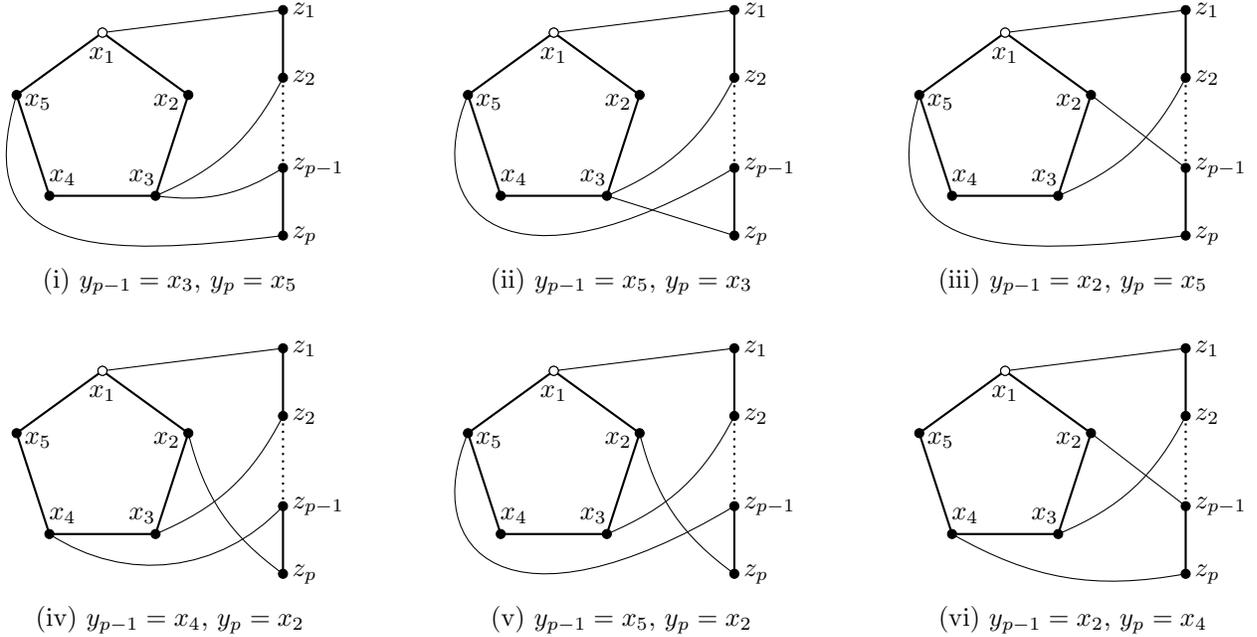

If $d_T(z_{p-1})=3$, then we can prove similarly as in Case \ref{Ca2.1}. So we assume that $d_T(z_{p-1})=2$. Recall that $d_C(y_{p-1},y_p)=2$. If $\{y_{p-1},y_p\}\cap\{y_1,y_2\}\neq\emptyset$, then $\{y_{p-1},y_p\}=\{x_3,x_5\}$.
If $y_{p-1}=x_3$ and $y_p=x_5$, 
then $x_1z_1z_2x_3z_{p-1}z_px_5x_1$ is a 7-cycle, see Figure \ref{figure: 5-cycle-(2)} (i); 
if $y_{p-1}=x_5$ and $y_p=x_3$, 
then $x_1z_1z_2x_3z_pz_{p-1}x_5x_1$ is a 7-cycle see Figure \ref{figure: 5-cycle-(2)} (ii), 
both contradicting (1). 
Thus $\{y_{p-1},y_p\}\cap\{y_1,y_2\}=\emptyset$
and $\{y_{p-1},y_p\}\in \{\{x_2,x_4\},\{x_2,x_5\}\}$.

If $y_{p-1}=x_2$, $y_p=x_5$, see Figure \ref{figure: 5-cycle-(2)} (iii), then $z_2x_3x_2z_{p-1}$, $z_2z_1x_1x_2z_{p-1}$, $z_2z_1x_1x_5z_pz_{p-1}$ are three $(z_2,z_{p-1})$-paths internally-disjoint with $P[z_2,z_{p-1}]$, and of lengths 3, 4, 5, respectively, a contradiction. If $y_{p-1}=x_4$, $y_p=x_2$, see Figure \ref{figure: 5-cycle-(2)} (iv), then we can prove similarly. So we conclude that either $y_{p-1}=x_5$, $y_p=x_2$, or $y_{p-1}=x_2$, $y_p=x_4$.

Suppose first that $y_{p-1}=x_5$, $y_p=x_2$, see Figure \ref{figure: 5-cycle-(2)} (v). Then $z_2x_3x_2z_pz_{p-1}$, $z_2z_1x_1x_2z_pz_{p-1}$ are two $(z_2,z_{p-1})$-paths internally-disjoint with $P[z_2,z_{p-1}]$, and of lengths 4, 5, respectively. It follows that $|P[z_2,z_{p-1}]|\equiv 1\bmod 3$ and $|P|\equiv 0\bmod 3$. We claim that there are no edges between $V(T)\backslash\{z_1,z_2,z_{p-1},z_p\}$ and $C$. Suppose otherwise. There will be a path $Q$ from $P(z_2,z_{p-1})$ to $C$ with all internal vertices not in $P$. Let $z_0,y_0$ be the origin and terminus of $Q$, respectively. Recall that $y_0\neq x_1$, and similarly, $y_0\neq x_2$. We can assume w.l.o.g. that $y_0\in\{x_3,x_4\}$. If $y_0=x_3$, then $x_3z_2P[z_2,z]zQx_3$ and $x_2z_pz_{p-1}x_5x_1x_2$ are two disjoint cycles, a contradiction. If $y_0=x_4$, then $x_3z_2P[z_2,z]zQx_4x_3$ and $x_2z_pz_{p-1}x_5x_1x_2$ are two disjoint cycles, a contradiction. Thus as we claimed, the edges $z_iy_i$ with $i\in \{1,2,p-1,p\}$ are the only edges between $P$ and $C$. It follows that $T=P$ is a path. Recall that $|P|\equiv 0\bmod 3$. If $|P|=3$, then $G[V(C)\cup V(T)]$ is isomorphic to $L_2$, as desired. If $|P|\geqslant 6$, then $\rho(V(T))=|P|+4\leqslant \frac{3}{2}|V(T)|$, a contradiction.

Suppose now that $y_{p-1}=x_2$, $y_p=x_4$, see Figure \ref{figure: 5-cycle-(2)} (vi). Then $z_2x_3x_2z_{p-1}$, $z_2z_1x_1x_2z_{p-1}$ are two $(z_2,z_{p-1})$-paths internally-disjoint with $P[z_2,z_{p-1}]$, and of lengths 3, 4, respectively. It follows that $|P[z_2,z_{p-1}]|\equiv 2\bmod 3$ and $|P|\equiv 1\bmod 3$. We claim that there are no edges between $V(T)\backslash\{z_1,z_2,z_{p-1},z_p\}$ and $C$. Suppose otherwise. There will be a path $Q$ from $P(z_2,z_{p-1})$ to $C$ with all internal vertices not in $P$. Let $z_0,y_0$ be the origin and terminus of $Q$, respectively. Recall that $y_0\neq x_1$, and similarly, $y_0\neq x_4$. We can assume w.l.o.g. that $y_0\in\{x_2,x_5\}$. If $y_0=x_2$, then $x_1z_1z_2x_3x_4x_5x_1$ and $x_2z_{p-1}P[z_{p-1},z]zQx_2$ are two disjoint cycles, a contradiction. If $y_0=x_5$, then $x_1z_1P[z_1,z]zQx_5x_1$ and $x_2z_{p-1}z_px_4x_3x_2$ are two disjoint cycles, a contradiction. Thus as we claimed, the edges $z_iy_i$ with $i\in \{1,2,p-1,p\}$ are the only edges between $P$ and $C$. It follows that $T=P$ is a path. Recall that $|P|\equiv 1\bmod 3$. If $|P|=4$, then $G[V(C)\cup V(T)]$ is isomorphic to $L_3$, as desired. If $|P|\geqslant 7$, then $\rho(V(T))=|P|+4\leqslant \frac{3}{2}|V(T)|$, a contradiction.

\begin{case}
$|C|=6$.  
\end{case}

\begin{subcase}\label{Ca3.1}
$d_T(z_2)=3$.
\end{subcase}

Recall that $d_C(y_1,y'_1)=2$, we can assume w.l.o.g. that $y_1=x_1$, $y'_1=x_3$.  
If $x_4$ has a neighbor $z_0$ in $T$, then let $Q$ be the path from $z_2$ to $z_0$ in $T$. Now $z_2z'_1x_3x_4$, $z_2z_1x_1x_2x_3x_4$, $z_2z'_1x_3x_2x_1x_6x_5x_4$ are three $(z_2,x_4)$-paths internally-disjoint with $Qz_0x_4$, and of lengths 3, 5, 7, respectively, a contradiction. So we conclude that $x_4$, and similarly, $x_6$, has no neighbors in $T$. If $x_1$ has a neighbor $z_0$ in $T$ other than $z_1$, then let $Q$ be the path from $z_2$ to $z_0$ in $T$. Now $z_2z_1x_1$, $z_2z'_1x_3x_2x_1$, $z_2z'_1x_3x_4x_5x_6x_1$ are three $(z_2,x_1)$-paths internally-disjoint with $Qz_0x_1$, and of lengths 2, 4, 6, respectively, a contradiction. So we conclude that $x_1$, and similarly, $x_3$, has no neighbors in $V(T)\backslash\{z_1,z'_1\}$. It follows that $N_C(V(T)\backslash\{z_1,z'_1\})\subseteq\{x_2,x_5\}$.

If $d_T(z_{p-1})=3$, then $\{y_p,y'_p\}\subseteq\{x_2,x_5\}$, implying that $d_C(y_p,y'_p)\in \{0,3\}$, contradicting Claim \ref{Cldcx1x2}. So we have $d_T(z_{p-1})=2$. Since $d_C(y_{p-1},y_p)=3$, we have $\{y_{p-1},y_p\}=\{x_2,x_5\}$. If $y_{p-1}=x_2$ and $y_p=x_5$, see Figure \ref{figure: 6-cycle} (i), then $x_1z_1z_2z'_1x_3x_2z_{p-1}z_px_5x_6x_1$ is a 10-cycle, contradicting (1). Similarly, if $y_{p-1}=x_5$ and $y_p=x_2$, then there is a 10-cycle, a contradiction again.

\begin{figure}[ht]
\centering
\begin{tikzpicture}[scale=0.6]
\begin{scope}
\foreach \x in {1,2,...,6}
{\coordinate (x\x) at (150-60*\x:2);
\draw[fill=black](x\x) circle (0.1);
\node at (150-60*\x:1.5) {$x_\x$};}
\draw[thick](x1)--(x2)--(x3)--(x4)--(x5)--(x6)--(x1); 
\coordinate (z2) at (4,1);
\draw[fill=black](z2){node[right]{$z_2$}} circle (0.1);
\coordinate (zp1) at (4,-1);
\draw[fill=black](zp1){node[right]{$z_{p-1}$}} circle (0.1);
\draw[dotted,thick](z2)--(zp1); 
\coordinate (z1) at (3,2.5); \coordinate (z1') at (5,2.5);
\draw[fill=black](z1){node[right]{$z_1$}} circle (0.1);
\draw[fill=black](z1'){node[right]{$z'_1$}} circle (0.1);
\draw[thick](z1)--(z2)--(z1'); 
\coordinate (zp) at (4,-2.5);
\draw[fill=black](zp){node[right]{$z_p$}} circle (0.1);
\draw[thick](zp1)--(zp); 
\path(z1) edge [bend right=10] (x1);
\draw(z1')..controls (3,4) and (2,3)..(2.5,1.5); \draw(2.5,1.5)..controls (3,0) and (2,-1)..(x3);

\path(zp1) edge [bend left=10] (x2);
\path(zp) edge [bend left=50] (x5);
\foreach \x in {1,3,4,6} \draw[fill=white](x\x) circle (0.1);
\node[below] at (2,-3.5){(i) $d_T(x_2)=3$};
\end{scope}

\begin{scope}[xshift=10cm]
\foreach \x in {1,2,...,6}
{\coordinate (x\x) at (150-60*\x:2);
\draw[fill=black](x\x) circle (0.1);
\node at (150-60*\x:1.5) {$x_\x$};}
\draw[thick](x1)--(x2)--(x3)--(x4)--(x5)--(x6)--(x1); 
\coordinate (z2) at (4,1);
\draw[fill=black](z2){node[right]{$z_2$}} circle (0.1);
\coordinate (zp1) at (4,-1);
\draw[fill=black](zp1){node[right]{$z_{p-1}$}} circle (0.1);
\draw[dotted,thick](z2)--(zp1); 
\coordinate (z1) at (4,2.5);
\draw[fill=black](z1){node[right]{$z_1$}} circle (0.1);
\draw[thick](z1)--(z2); 
\coordinate (zp) at (4,-2.5);
\draw[fill=black](zp){node[right]{$z_p$}} circle (0.1);
\draw[thick](zp1)--(zp); 
\path(z1) edge [bend right=10] (x1);
\draw(z2)..controls (3,0) and (3,-2)..(x4);
\draw(zp1)..controls (3,0) and (3,2)..(x1);
\path(zp) edge [bend left=10] (x4);
\foreach \x in {3,5} \draw[fill=white](x\x) circle (0.1);
\node[below] at (1.5,-3.5){(ii) $d_T(x_2)=2$, $y_{p-1}=x_1$};
\end{scope}

\begin{scope}[xshift=20cm]
\foreach \x in {1,2,...,6}
{\coordinate (x\x) at (150-60*\x:2);
\draw[fill=black](x\x) circle (0.1);
\node at (150-60*\x:1.5) {$x_\x$};}
\draw[thick](x1)--(x2)--(x3)--(x4)--(x5)--(x6)--(x1); 
\coordinate (z2) at (4,1);
\draw[fill=black](z2){node[right]{$z_2$}} circle (0.1);
\coordinate (zp1) at (4,-1);
\draw[fill=black](zp1){node[right]{$z_{p-1}$}} circle (0.1);
\draw[dotted,thick](z2)--(zp1); 
\coordinate (z1) at (4,2.5);
\draw[fill=black](z1){node[right]{$z_1$}} circle (0.1);
\draw[thick](z1)--(z2); 
\coordinate (zp) at (4,-2.5);
\draw[fill=black](zp){node[right]{$z_p$}} circle (0.1);
\draw[thick](zp1)--(zp); 
\path(z1) edge [bend right=10] (x1);
\draw(z2)..controls (3,0) and (3,-2)..(x4);
\path(zp1) edge [bend left=30] (x4);
\draw(zp)..controls (3,-1) and (3.5,2)..(x1);
\foreach \x in {3,5} \draw[fill=white](x\x) circle (0.1);
\node[below] at (1.5,-3.5){(iii) $d_T(x_2)=2$, $y_{p-1}=x_4$};
\end{scope}

\end{tikzpicture}
\caption{$|C|=6$.}
\label{figure: 6-cycle}
\end{figure}

\begin{subcase}
$d_T(z_2)=2$.
\end{subcase}

Recall that $d_C(y_1,y_2)=3$, we can assume w.l.o.g. that $y_1=x_1$ and $y_2=x_4$. If $x_3$ has a neighbor $z_0$ in $T$, then let $Q$ be the path from $z_2$ to $z_0$ in $T$. Now $z_2x_4x_3$, $z_2z_1x_1x_2x_3$, $z_2z_1x_1x_6x_5x_4x_3$ are three $(z_2,x_3)$-paths internally-disjoint with $Qz_0x_3$, and of lengths 2, 4, 6, respectively, a contradiction. So we conclude that $x_3$, and similarly, $x_5$, has no neighbors in $T$. 

If $d_T(z_{p-1})=3$, then we can prove similarly as in Case \ref{Ca3.1}. So we assume that $d_T(z_{p-1})=2$. Recall that $d_C(y_{p-1},y_p)=3$ and $\{y_{p-1},y_p\}\cap\{x_3,x_5\}=\emptyset$. We have that $\{y_{p-1},y_p\}=\{x_1,x_4\}$.  
If $y_{p-1}=x_1$ and $y_p=x_4$, see Figure \ref{figure: 6-cycle} (ii), 
then $z_2z_1x_1z_{p-1}$, $z_2x_4x_3x_2x_1z_{p-1}$, $z_2z_1x_1x_2x_3x_4z_pz_{p-1}$ are three $(z_2,z_{p-1})$-paths internally-disjoint with $P[z_2,z_{p-1}]$, and of lengths 3, 5, 7, respectively, a contradiction. 
If $y_{p-1}=x_4$ and $y_p=x_1$, see Figure \ref{figure: 6-cycle} (iii), 
then $z_2x_4z_{p-1}$, $z_2z_1x_1z_pz_{p-1}$, $z_2z_1x_1x_2x_3x_4z_{p-1}$ are three $(z_2,z_{p-1})$-paths internally-disjoint with $P[z_2,z_{p-1}]$, and of lengths 2, 4, 6, respectively, a contradiction.

\begin{case}
$|C|=8$.  
\end{case}

If $d_T(z_2)=2$, then $z_2$ has no neighbors in $C$ by Claim \ref{Cldcx1x2}, contradicting Claim \ref{Cldy2}. So we conclude that $d_T(z_2)=3$, and similarly, $d_T(z_{p-1})=3$. Recall that $d_C(y_1,y'_1)=4$, we can assume w.l.o.g. that $y_1=x_1$, $y'_1=x_5$.

If $x_2$ has a neighbor $z_0$ in $T$, then let $Q$ be the path from $z_2$ to $z_0$ in $T$. Now $z_2z_1x_1x_2$, $z_2z'_1x_5x_4x_3x_2$, $z_2z'_1x_5x_6x_7x_8x_1x_2$ are three $(z_2,x_2)$-paths internally-disjoint with $Qz_0x_2$, and of lengths 3, 5, 7, respectively, a contradiction. So we conclude that $x_2$, and similarly, $x_4,x_6,x_8$, have no neighbors in $T$. Recall that $d_C(y_p,y'_p)=4$. We have that either $\{y_p,y'_p\}=\{x_1,x_5\}$ or $\{y_p,y'_p\}=\{x_3,x_7\}$.

\begin{figure}[ht]
\centering
\begin{tikzpicture}[scale=0.6]

\begin{scope}
\foreach \x in {1,2,...,8}
{\coordinate (x\x) at (135-45*\x:2);
\draw[fill=black](x\x) circle (0.1);
\node at (135-45*\x:1.5) {$x_\x$};}
\draw[thick](x1)--(x2)--(x3)--(x4)--(x5)--(x6)--(x7)--(x8)--(x1); 
\coordinate (z2) at (4,1);
\draw[fill=black](z2){node[right]{$z_2$}} circle (0.1);
\coordinate (zp1) at (4,-1);
\draw[fill=black](zp1){node[right]{$z_{p-1}$}} circle (0.1);
\draw[dotted,thick](z2)--(zp1); 
\coordinate (z1) at (3,2.5); \coordinate (z1') at (5,2.5);
\draw[fill=black](z1){node[right]{$z_1$}} circle (0.1);
\draw[fill=black](z1'){node[right]{$z'_1$}} circle (0.1);
\draw[thick](z1)--(z2)--(z1'); 
\coordinate (zp) at (3,-2.5); \coordinate (zp') at (5,-2.5);
\draw[fill=black](zp){node[right]{$z_p$}} circle (0.1);
\draw[fill=black](zp'){node[right]{$z'_p$}} circle (0.1);
\draw[thick](zp)--(zp1)--(zp'); 
\path(z1) edge [bend right=10] (x1);
\draw(z1')..controls (3,4.5) and (-1,3)..(-2,2); 
\draw(-2,2)..controls (-3.5,0.5) and (-3,-3.5)..(x5);
\draw(zp).. controls (3.5,0) and (2.5,2.5)..(x1);
\path(zp') edge [bend left=40] (x5);
\foreach \x in {2,4,6,8} \draw[fill=white](x\x) circle (0.1);
\node[below] at (1.5,-3.5){(i) $\{y_{p-1},y_p\}=\{x_1,x_5\}$};
\end{scope}

\begin{scope}[xshift=11cm]
\foreach \x in {1,2,...,8}
{\coordinate (x\x) at (135-45*\x:2);
\draw[fill=black](x\x) circle (0.1);
\node at (135-45*\x:1.5) {$x_\x$};}
\draw[thick](x1)--(x2)--(x3)--(x4)--(x5)--(x6)--(x7)--(x8)--(x1); 
\coordinate (z2) at (4,1);
\draw[fill=black](z2){node[right]{$z_2$}} circle (0.1);
\coordinate (zp1) at (4,-1);
\draw[fill=black](zp1){node[right]{$z_{p-1}$}} circle (0.1);
\draw[dotted,thick](z2)--(zp1); 
\coordinate (z1) at (3,2.5); \coordinate (z1') at (5,2.5);
\draw[fill=black](z1){node[right]{$z_1$}} circle (0.1);
\draw[fill=black](z1'){node[right]{$z'_1$}} circle (0.1);
\draw[thick](z1)--(z2)--(z1'); 
\coordinate (zp) at (3,-2.5); \coordinate (zp') at (5,-2.5);
\draw[fill=black](zp){node[right]{$z_p$}} circle (0.1);
\draw[fill=black](zp'){node[right]{$z'_p$}} circle (0.1);
\draw[thick](zp)--(zp1)--(zp'); 
\path(z1) edge [bend right=10] (x1);
\draw(z1')..controls (3,4.5) and (-1,3)..(-2,2); 
\draw(-2,2)..controls (-3.5,0.5) and (-3,-3.5)..(x5);
\path(zp) edge [bend right=20] (x3);
\draw(zp')..controls (3,-4) and (-3,-4)..(x7);
\foreach \x in {2,4,6,8} \draw[fill=white](x\x) circle (0.1);
\node[below] at (1.5,-3.5){(ii) $\{y_{p-1},y_p\}=\{x_3,x_7\}$};
\end{scope}

\end{tikzpicture}
\caption{$|C|=10$.}
\label{figure: 8-cycle}
\end{figure}

Suppose first that $\{y_p,y'_p\}=\{x_1,x_5\}$, see Figure \ref{figure: 8-cycle} (i). We can assume w.l.o.g. that $y_p=x_1$, $y'_p=x_5$. Now $z_2z_1x_1z_pz_{p-1}$, $z_2z'_1x_5x_4x_3x_2x_1z_pz_{p-1}$ are two $(z_2,z_{p-1})$-paths internally-disjoint with $P[z_2,z_{p-1}]$, and of lengths 4, 8, respectively. It follows that $|P[z_2,z_{p-1}]|\equiv 1\bmod 3$ and $|P|\equiv 0\bmod 3$. By (3), we have that $d_G(z_3)\geqslant 3$. It follows that there exists a path $Q$ from $z_3$ to $C$ with all internal vertices not in $P$. Let $y_0$ be the terminus of $Q$. Then $y_0\in\{x_1,x_3,x_5,x_7\}$. 

If $y_0=x_1$, then $P[z_3,z_p]z_px_1$, $z_3z_2z_1x_1$, $z_3z_2z'_1x_5x_4x_3x_2x_1$ are three $(z_3,x_1)$-paths internally-disjoint with $Q$, and of lengths $|P[z_3,z_p]|+1$, 3, 7, respectively, which are different modulo 3 (notice that $|P[z_3,z_p]|\equiv 1\bmod 3$), a contradiction. If $y_0=x_3$, then $P[z_3,z_p]z_px_1x_2x_3$, $z_3z_2z_1x_1x_2x_3$, $z_3z_2z_1x_1x_8x_7x_6x_5x_4x_3$ are three $(z_3,x_3)$-paths internally-disjoint with $Q$, and of lengths $|P[z_3,z_p]|+3$, 5, 9, respectively, which are different modulo 3, a contradiction. The case $y_0=x_5$ or $x_7$ can be proved similarly. 

Now suppose that $\{y_p,y'_p\}=\{x_3,x_7\}$, see Figure \ref{figure: 8-cycle} (ii). We can assume w.l.o.g. that $y_p=x_3$, $y'_p=x_7$. We claim that there are no edges between $V(T)\backslash\{z_1,z'_2,z_p,z'_p\}$ and $C$. Suppose otherwise. There will be a path $Q$ from $P(z_2,z_{p-1})$ to $C$ with all internal vertices not in $P$. Let $z_0,y_0$ be the origin and terminus of $Q$, respectively. Recall that $y_0\in\{x_1,x_3,x_5,x_7\}$. We can assume w.l.o.g. that $y_0=x_1$, then $x_1z_1z_2Qx_1$ and $x_3z_pz_{p-1}z'_px_7x_6x_5x_4x_3$ are two disjoint cycles, a contradiction. Thus as we claimed, $z_1x_1, z_px_3, z'_1x_5, z'_px_7$ are the only edges between $P$ and $C$. It follows that $V(T)=V(P)\cup \{z'_1,z'_p\}$. Notice that $|V(T)|\geqslant 6$. We have $\rho(V(T))=|V(T)|+3\leqslant \frac{3}{2}|V(T)|$, a contradiction. 
\end{proof}

\section{Proof of Theorem \ref{thm: main}}\label{section: proof of main thm}

Suppose that $G$ is an $n$-vertex graph containing no $(1\bmod 3)$-cycles. We will show that $e(G)\leqslant 15q+\frac{3}{2}r$. We proceed our proof by induction. If $n\leqslant 6$, then $G$ contains no $(1\bmod 3)$-cycles if and only if $G$ contains no 4-cycles, and our assertion is true by the Tru\'an number $ex(n,C_4)$ (e.g., see \cite{CFS89}). So we will assume that $n\geqslant 7$. 

\setcounter{claim}{0}
\begin{claim}\label{claim: 2-con}
$G$ is $2$-connected.
\end{claim}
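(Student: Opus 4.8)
The plan is to reduce to the $2$-connected case by a standard block-decomposition argument: I will show that if $G$ is \emph{not} $2$-connected, then the desired inequality $e(G)\le 15q+\tfrac32 r$ already follows from the induction hypothesis, so that for the remainder of the proof we may assume $G$ is $2$-connected. Throughout, introduce the shorthand $\phi(m)=15q'+\tfrac32 r'$ where $m=9q'+r'$ and $0\le r'\le 8$, so that the target bound reads $e(G)\le\phi(n-1)$.

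The arithmetic core of the reduction is that $\phi$ is strictly increasing and superadditive in the form
\[
\phi(n_1-1)+\phi(n_2-1)\le \phi(n_1+n_2-2)\qquad\text{for all integers }n_1,n_2\ge 1.
\]
To see this, write $n_i-1=9q_i+r_i$ with $0\le r_i\le 8$. If $r_1+r_2\le 8$, both sides equal $15(q_1+q_2)+\tfrac32(r_1+r_2)$; if $r_1+r_2\ge 9$, the left side is $15(q_1+q_2)+\tfrac32(r_1+r_2)$ while the right side is $15(q_1+q_2+1)+\tfrac32(r_1+r_2-9)$, and their difference is $15-\tfrac{27}{2}=\tfrac32>0$. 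The same computation (with $n_2=2$, $r_2=1$, or directly) shows $\phi(m)<\phi(m+1)$ for every $m\ge 0$. This superadditivity is exactly what makes gluing at a cut-vertex lossless, and is the reason the extremal bound has the shape $\tfrac53(n-1)$ with an additive correction.

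Now suppose $G$ is not $2$-connected. If $G$ is disconnected, let $G_1$ be one of its components and set $G_2=G-V(G_1)$; if $G$ is connected with a cut-vertex $v$, split $G$ along $v$ as $G=G_1\cup G_2$ with $V(G_1)\cap V(G_2)=\{v\}$ and $E(G)=E(G_1)\cup E(G_2)$ a disjoint union, where $|V(G_i)|\ge 2$. In either case each $G_i$ is a subgraph of $G$ with fewer than $n$ vertices, hence contains no $(1\bmod 3)$-cycle, so the induction hypothesis (invoking the base case $n\le 6$ when $|V(G_i)|\le 6$) gives $e(G_i)\le\phi(|V(G_i)|-1)$. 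Since $e(G)=e(G_1)+e(G_2)$: in the cut-vertex case $|V(G_1)|+|V(G_2)|=n+1$, so the superadditivity inequality yields $e(G)\le\phi(n-1)$; in the disconnected case $|V(G_1)|+|V(G_2)|=n$, so superadditivity followed by monotonicity yields $e(G)\le\phi(n-2)\le\phi(n-1)$. In both cases the bound of Theorem \ref{thm: main} holds, so we may assume henceforth that $G$ is $2$-connected.

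I do not expect a genuine obstacle in this step. The only things that need care are the (routine) verification of superadditivity above and the bookkeeping that both pieces $G_i$ are strictly smaller than $G$, so that the inductive hypothesis or the base case truly applies; everything else is the standard observation that edge counts and the quantity $n-1$ are additive over a block (respectively, over components).
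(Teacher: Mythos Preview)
Your argument is correct and follows essentially the same approach as the paper: both reduce to the $2$-connected case by splitting at a cut-vertex, applying induction to the two pieces, and using the arithmetic fact that the bound is (super)additive with respect to $n-1$. The paper packages this via the rewrite $15q+\tfrac32 r=\tfrac32(n-1+q)$ together with $q_1+q_2\le q$, while you verify superadditivity of $\phi$ directly; these are the same computation, and your separate treatment of the disconnected case via monotonicity is a harmless variant of the paper's trick of adding bridge edges to connect $G$.
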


\begin{proof}
By $n-1=9q+r$, $0\leqslant r\leqslant 8$, we have 
\[
15q+\frac{3}{2}r
=15q+\frac{3}{2}(n-1-9q)
=\frac{3}{2}(n+q-1).    
\]
We assume that $G$ is connected; for otherwise we can add $\omega(G)-1$ edges between components of $G$ to obtain a connected graph, where $\omega(G)$ is the number of components of $G$ (notice that the added edges is not contained in any cycle). Suppose that $G$ is not 2-connected. Then there exist two nontrivial subgraphs $G_1,G_2$ of $G$ satisfying that $G_1\cup G_2=G$ and $V(G_1)\cap V(G_2)=\{x\}$. Let $n_i=|V(G_i)|$ for $i\in \{1,2\}$. Then $n+1=n_1+n_2$. Denote by $n_1-1=9q_1+r_1$, $n_2-1=9q_2+r_2$, here $0\leqslant r_1,r_2\leqslant 8$. Then either 
(i) $q=q_1+q_2$, $r=r_1+r_2$; or (ii) $q-1=q_1+q_2$, $r+9=r_1+r_2$. It follows that $q_1+q_2\leqslant q$.
By assumption, $G$ contains no $(1\bmod 3)$-cycles, and so does $G_1$ and $G_2$. By induction hypothesis,
we have $e(G_i)\leqslant \frac{3}{2}(n_i+q_i-1)$ for $i\in \{1,2\}$. Thus
\[
e(G)
=e(G_1)+e(G_2)
\leqslant \frac{3}{2}(n_1+q_1-1)+\frac{3}{2}(n_2+q_2-1)
=\frac{3}{2}((n_1+n_2)+(q_1+q_2)-2)
\leqslant \frac{3}{2}(n+q-1),
\]
a contradiction. 
\end{proof}

\begin{claim}\label{claim: rho}
$\rho(U)>\frac{3}{2}|U|$ for any nonempty subset $U$ of $V(G)$.
\end{claim}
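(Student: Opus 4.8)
The plan is to establish the claim by a vertex-deletion reduction against the inductive hypothesis, namely Theorem~\ref{thm: main} for graphs on fewer than $n$ vertices. Precisely, I will show that if some nonempty $U\subseteq V(G)$ satisfied $\rho(U)\leqslant\frac{3}{2}|U|$, then $e(G)\leqslant 15q+\left\lfloor\frac{3r}{2}\right\rfloor$ would follow; since $G$ is assumed to have no $(1\bmod 3)$-cycle this is exactly the bound we are trying to prove, so we would be done, and we may therefore assume henceforth that $\rho(U)>\frac{3}{2}|U|$ for every nonempty $U$.

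So suppose such a $U$ exists and put $G':=G-U$, $m:=|V(G')|=n-|U|$. Every edge of $G$ either has both ends outside $U$ or is incident to a vertex of $U$, so $e(G)=e(G')+\rho(U)$ exactly. As $G'$ is a subgraph of $G$ it has no $(1\bmod 3)$-cycle, and if $1\leqslant|U|\leqslant n-1$ then $1\leqslant m\leqslant n-1$; writing $m-1=9q'+r'$ with $0\leqslant r'\leqslant 8$, the inductive hypothesis gives $e(G')\leqslant 15q'+\left\lfloor\frac{3r'}{2}\right\rfloor\leqslant\frac{3}{2}(m+q'-1)$. Combining this with $\rho(U)\leqslant\frac{3}{2}|U|$,
\[
e(G)\leqslant\frac{3}{2}(m+q'-1)+\frac{3}{2}|U|=\frac{3}{2}(n+q'-1).
\]
Now $|U|=n-m=9(q-q')+(r-r')\geqslant 1$ while $|r-r'|\leqslant 8$, so we cannot have $q'\geqslant q+1$; hence $q'\leqslant q$, and therefore $e(G)\leqslant\frac{3}{2}(n+q-1)$. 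Since $e(G)$ is an integer and $\frac{3}{2}(n+q-1)=15q+\frac{3r}{2}$, this yields $e(G)\leqslant 15q+\left\lfloor\frac{3r}{2}\right\rfloor$, as required.

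The only case not covered above is $U=V(G)$ (so $m=0$), where $e(G)=\rho(V(G))\leqslant\frac{3}{2}n$. For $n\geqslant 10$ we have $q\geqslant 1$, so $15q+\left\lfloor\frac{3r}{2}\right\rfloor=\left\lfloor\frac{3}{2}(n+q-1)\right\rfloor\geqslant\left\lfloor\frac{3}{2}n\right\rfloor\geqslant e(G)$, again giving the bound; and for the finitely many orders $7\leqslant n\leqslant 9$ (where $q=0$) one combines $e(G)\leqslant\frac{3}{2}n$ with the fact that a $(1\bmod 3)$-cycle-free graph is in particular $C_4$-free, hence has at most $ex(n,C_4)$ edges, and disposes of the resulting few small near-extremal configurations by direct inspection (using Claim~\ref{claim: 2-con} to discard those with a cut vertex). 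This completes the proof of the claim.

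I do not expect a genuine obstacle here: the main argument is a pure counting reduction and uses no structural input beyond the inductive hypothesis. The only points needing care are the elementary bookkeeping with $q,q',r,r'$---in particular the inequality $q'\leqslant q$, which is exactly what prevents the $\frac{3}{2}|U|$ contribution from $U$, added to the inductive estimate on $G-U$, from overshooting the target $15q+\left\lfloor\frac{3r}{2}\right\rfloor$---together with the separate, finite treatment of the degenerate case $U=V(G)$ for small $n$. Once this claim is available, hypothesis~(3) of Lemma~\ref{lemma: L123} holds for every component of $G-C$ (any subset $U$ of such a component is a subset of $V(G)$, and $\rho(U)>\frac{3}{2}|U|$ forces $\rho(U)>\left\lfloor\frac{3}{2}|U|\right\rfloor$ since $\rho(U)$ is an integer), which is precisely how the claim will be used in the rest of the proof.
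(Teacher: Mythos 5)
Your main reduction --- delete $U$, apply the inductive bound to $G-U$, add back $\rho(U)\leqslant\frac{3}{2}|U|$, and check $q'\leqslant q$ --- is exactly the paper's argument, and the arithmetic is correct. The extra paragraph on $U=V(G)$ goes beyond what the paper does (the paper's proof silently assumes $U\subsetneq V(G)$, since it applies the inductive hypothesis to $G-U$); your handling of $7\leqslant n\leqslant 9$ there is not fully carried out, but that case is never actually needed later (every invocation of the claim takes $U$ to be a two-element set or a subset of a component $T$ of $G-C$), so the omission is immaterial.
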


\begin{proof}
Assume the opposite that $\rho(U)\leqslant \frac{3}{2}|U|$ for some nonempty $U\subseteq V(G)$. Let $n_1=|V(G-U)|$. Denote by $n_1-1=9q_1+r_1$, where $0\leqslant r_1\leqslant 8$. It follows that $|U|=n-n_1$ and $q_1\leqslant q$. Since $G-U$ contains no $(1\bmod 3)$-cycles, by induction hypothesis, we have $e(G-U)\leqslant \frac{3}{2}(n_1+q_1-1)$. Thus 
$$e(G)=e(G-U)+\rho(U)\leqslant \frac{3}{2}(n_1+q_1-1)+\frac{3}{2}(n-n_1)=\frac{3}{2}(n+q_1-1)\leqslant \frac{3}{2}(n+q-1),$$
a contradiction.
\end{proof}

From Claim \ref{claim: 2-con} and Theorem \ref{thm: dean et al.}, 
we can get that $\delta(G)=2$. 
Claim \ref{claim: rho} implies that $N_2(G)$ is an independent set, i.e., any two vertices of degree 2 are not adjacent to each other. 

\begin{claim}\label{claim: essentially-3-con}
$G$ is essentially $3$-connected. Moreover, for any vertex cut $\{x,y\}$ of $G$, we have $xy\notin E(G)$
and $G-\{x,y\}$ has exactly two components, one of which is nontrivial and the other one is trivial.
\end{claim}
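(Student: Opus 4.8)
The plan is to fix an arbitrary vertex cut of $G$ -- necessarily of the form $\{x,y\}$, since $2$-connectivity rules out cuts of size $\le 1$ -- and to establish, in this order: (i) $G-\{x,y\}$ has at most one nontrivial component, which, together with $2$-connectivity, shows that $G$ is essentially $3$-connected (it then has no essential vertex cut of size $\le 2$); (ii) $G-\{x,y\}$ has exactly one nontrivial and exactly one trivial component; and (iii) $xy\notin E(G)$. Throughout I would use that $G$ has no $4$-cycle (as $4\equiv 1\bmod 3$), that $N_2(G)$ is independent, and that each of $x,y$ has a neighbour in every component of $G-\{x,y\}$ (otherwise the other one would be a cut-vertex).

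For a component $H$ of $G-\{x,y\}$ write $G_H:=G[V(H)\cup\{x,y\}]$. First I would show that $G_H+xy$ is $2$-connected: it is connected on at least three vertices; $G_H+xy-x=G[V(H)\cup\{y\}]$ and $G_H+xy-y$ are connected because $H$ is connected and meets both $N_G(x)$ and $N_G(y)$; and a cut-vertex $z\in V(H)$ is excluded since, $G-z$ being connected, any walk of $G-z$ between two vertices of $G_H$ can be rerouted inside $G_H+xy-z$ by replacing each maximal excursion out of $G_H$ either trivially (when it leaves and re-enters through the same vertex of $\{x,y\}$) or by the edge $xy$ (when it leaves through one of $x,y$ and re-enters through the other). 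Hence, for a \emph{nontrivial} $H$, the graph $G_H$ has at least four vertices and satisfies hypotheses (1)--(3) of Lemma~\ref{lemma: 2 paths mod 3} -- condition (2) because every vertex of $H$ retains its $G$-degree in $G_H$, and (3) because $G_H\subseteq G$ -- so $G_H$ contains two $(x,y)$-paths, with all interior vertices in $V(H)$, of lengths incongruent modulo $3$.

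Next I would derive (i) and (ii). If two components $H,H'$ of $G-\{x,y\}$ were both nontrivial, pick $(x,y)$-paths $P_1,P_2\subseteq G_H$ and $P_1',P_2'\subseteq G_{H'}$ as above; since $V(H)\cap V(H')=\emptyset$, each $P_a\cup P_b'$ (for $a,b\in\{1,2\}$) is a cycle of length $|P_a|+|P_b'|$. Reducing modulo $3$, $\{|P_1|,|P_2|\}$ and $\{|P_1'|,|P_2'|\}$ are $2$-element subsets of $\mathbb Z_3$, whose sumset is necessarily all of $\mathbb Z_3$; so some $P_a\cup P_b'$ has length $\equiv 1\bmod 3$, a contradiction -- this proves (i). For (ii): a trivial component of $G-\{x,y\}$ is a single vertex of degree $2$ adjacent to both $x$ and $y$, so two of them would form a $4$-cycle through $x$ and $y$; hence there is at most one trivial component, and since $G-\{x,y\}$ has at least two components, there is exactly one nontrivial component $H$ and exactly one trivial component, namely a vertex $v$ with $N_G(v)=\{x,y\}$.

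Finally, for (iii), suppose $xy\in E(G)$. Then $G_H=G_H+xy$ is $2$-connected, so Lemma~\ref{lemma: 2 paths mod 3} gives $(x,y)$-paths $Q_1,Q_2$ in $G_H$, with interiors in $V(H)$, such that $|Q_1|\not\equiv|Q_2|\bmod 3$. If $|Q_i|\equiv 0\bmod 3$ for some $i$ (so $|Q_i|\ge 3$), then $Q_i$ together with the edge $xy$ is a cycle of length $\equiv 1\bmod 3$; otherwise $\{|Q_1|,|Q_2|\}\equiv\{1,2\}\bmod 3$, and the $Q_i$ with $|Q_i|\equiv 2$, together with the path $xvy$ (whose only interior vertex $v$ lies outside $V(H)$), is a cycle of length $|Q_i|+2\equiv 1\bmod 3$. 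Either way $G$ would contain a $(1\bmod 3)$-cycle, a contradiction, so $xy\notin E(G)$. The hard part is the $2$-connectivity of $G_H+xy$, since that is precisely what licenses the repeated invocation of Lemma~\ref{lemma: 2 paths mod 3}; after that, everything is arithmetic in $\mathbb Z_3$, relying on the always-available short $(x,y)$-paths of lengths $1$ (the edge $xy$, when present) and $2$ (through the degree-$2$ vertex $v$).
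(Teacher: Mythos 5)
Your proof is correct and follows essentially the same route as the paper: apply Lemma~\ref{lemma: 2 paths mod 3} to $G[V(H)\cup\{x,y\}]$ for each nontrivial component $H$ to rule out two nontrivial components, rule out two trivial components via a $4$-cycle through $x,y$, and rule out $xy\in E(G)$ by combining the two $(x,y)$-paths with $xy$ and $xvy$. The only real addition is that you spell out the verification that $G_H+xy$ is $2$-connected (via rerouting walks of $G-z$ through the edge $xy$), which the paper simply asserts from $2$-connectivity of $G$; your modular framing (sumset of two $2$-element subsets of $\mathbb{Z}_3$ is all of $\mathbb{Z}_3$) and keeping the edge $xy$ in $G_H$ rather than deleting it first are cosmetic variants of the same argument.
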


\begin{proof}
Let $\{x,y\}$ be an arbitrary vertex cut of $G$. If $G-\{x,y\}$ has two nontrivial components $H_1, H_2$,
then the two subgraphs $G_i=G[V(H_i)\cup\{x,y\}]$, $i=1,2$, satisfy that $|V(G_1)|, |V(G_2)|\geqslant 4$ and $V(G_1)\cap V(G_2)=\{x,y\}$. Since $G$ is 2-connected, we see that $G_i+xy$ is 2-connected. By Claim \ref{claim: rho}, $N_2(G_i)\backslash\{x,y\}$ is an independent set. By assumption, $G_i$ contains no $(1\bmod 3)$-cycles. Then by Lemma \ref{lemma: 2 paths mod 3}, $G_i$ contains two $(x,y)$-paths of different lengths modulo 3, say of lengths $a_i$, $a_i+1\bmod 3$, $i=1,2$. This implies that $G$ contains three cycles of lengths $a_1+a_2,\ a_1+a_2+1,\ a_1+a_2+2\bmod 3$, respectively, one of which is a $(1\bmod 3)$-cycle, a contradiction. So we conclude that $G-\{x,y\}$ has at most one nontrivial component, which implies that $G$ is essentially 3-connected.

If $G-\{x,y\}$ has two or more trivial components, then the unique vertex in any trivial component is adjacent to both $x$ and $y$, and $G$ contains a 4-cycle passing through $x,y$, a contradiction. This implies that $G-\{x,y\}$ has exactly one trivial component and one nontrivial component.

Now we show that $xy\notin E(G)$. Suppose that $xy\in E(G)$. Let $z$ be the unique vertex in the trivial component of $G-\{x,y\}$. Let $G_1$ be a graph obtained from $G$ by removing the vertex $z$ and then removing the edge $xy$. By Lemma \ref{lemma: 2 paths mod 3}, $G_1$ contains two $(x,y)$-paths of different lengths modulo 3. Together with $xy$ or $xzy$, we will get a $(1\bmod 3)$-cycle in $G$, a contradiction.
\end{proof}

\begin{claim}\label{claim: 2 disjoint cycles}
$G$ contains two disjoint cycles.
\end{claim}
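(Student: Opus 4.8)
The plan is to argue by contradiction, so assume that $G$ contains no two disjoint cycles. Applying Claim~\ref{claim: rho} to $U=V(G)$ gives $e(G)>\tfrac32 n$; in particular $G$ is not a single cycle, and since a chord of a shortest cycle splits it into a strictly shorter cycle, not every vertex of $G$ can lie on a shortest cycle. Fix a shortest cycle $C$ and a component $T$ of $G-C$, which exists by the previous sentence. Then $G$ is $2$-connected by Claim~\ref{claim: 2-con}, contains no $(1\bmod 3)$-cycle and no two disjoint cycles, and $\rho(U)>\tfrac32|U|\geqslant\lfloor\tfrac32|U|\rfloor$ for every $U\subseteq V(T)$ by Claim~\ref{claim: rho}; hence Lemma~\ref{lemma: L123} applies and yields that $C$ is a $5$-cycle and $H:=G[V(C)\cup V(T)]$ is isomorphic to $L_1$, $L_2$ or $L_3$.

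I would first dispose of the case that $G-C$ has exactly one component. Then $V(G)=V(C)\cup V(T)$, so $G\cong H\in\{L_1,L_2,L_3\}$; counting shows $(e(H),|V(H)|)=(10,8),(12,9),(13,10)$ in the three cases, so each satisfies $e(H)<\tfrac32|V(H)|$, contradicting $e(G)>\tfrac32 n$. Thus from now on $G-C$ has at least two components; pick two of them, $T$ and $T'$, and set $H=G[V(C)\cup V(T)]$ and $H'=G[V(C)\cup V(T')]$, each isomorphic to one of $L_1,L_2,L_3$. The main observation is that for $S\in\{T,T'\}$ and any two vertices $u,v\in N_C(S)$, the graph $G[V(C)\cup V(S)]$ contains a cycle whose vertex set consists of the shorter $u$--$v$ arc of $C$ together with a $(u,v)$-path inside the tree $S$ (girth $5$ forces $u,v$ to have no common neighbour in $S$, and $S$ is connected); this cycle meets $V(C)$ in exactly that arc, which has $2$ or $3$ vertices. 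Inspecting $L_1,L_2,L_3$, one sees that $N_C(S)$ is always either a $4$-subset of $V(C)$ (complement a single vertex) or a $3$-subset whose complement is a non-adjacent pair.

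Armed with this, a short case analysis over the finitely many configurations of $(N_C(T),N_C(T'))$ on the $5$-cycle $C$ shows that, unless a single exceptional configuration occurs, one can choose $u,v\in N_C(T)$ and $u',v'\in N_C(T')$ whose arcs in $C$ are vertex-disjoint. The two resulting cycles are then vertex-disjoint: a common vertex would lie in $(V(C)\cup V(T))\cap(V(C)\cup V(T'))=V(C)$ since $V(T)\cap V(T')=\emptyset$, and we arranged the $C$-parts to be disjoint. This contradicts the assumption that $G$ has no two disjoint cycles. Concretely, whenever at least one of $N_C(T),N_C(T')$ is a $4$-set, or both are $3$-sets attached to \emph{different} triples, a disjoint pair of arcs can be found; the only obstruction is when $H$ and $H'$ are both copies of $L_1$ attached to the \emph{same} triple $\{a,b,d\}\subseteq V(C)$ with $ab\in E(C)$ and $d$ at distance $2$ from each of $a,b$, since then the only minimal $C$-arcs available are $\{a,b\}$, $\{a,e,d\}$, $\{b,c,d\}$ (where $c,e$ are the remaining vertices of $C$), and these pairwise intersect.

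The step I expect to be the main obstacle is precisely this exceptional configuration, and I would handle it by producing a $(1\bmod 3)$-cycle directly rather than two disjoint cycles. In $L_1$ the two leaves of the tree attach to $a,b$ and its centre attaches to $d$; so if $z_0$ is the centre of $T$ and $z_1$ its leaf at $a$, and $s_0$ is the centre of $T'$ and $s_2$ its leaf at $b$, then $a\,z_1\,z_0\,d\,s_0\,s_2\,b\,a$ is a cycle on $7$ distinct vertices, hence a $(1\bmod 3)$-cycle, contradicting the hypothesis on $G$. Since every configuration thus yields a contradiction, $G$ must contain two disjoint cycles.
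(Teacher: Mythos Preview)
Your argument is correct, but it proceeds quite differently from the paper's. The paper, after invoking Lemma~\ref{lemma: L123} exactly as you do, exploits the observation that each of $L_1,L_2,L_3$ contains a second $5$-cycle $C'$ meeting $C$ in exactly one edge $xy$. Since the girth is $5$, this $C'$ is also a shortest cycle, so Lemma~\ref{lemma: L123} applies to $C'$ as well. If $G-C$ has a second component $H'$, then in $G-C'$ the set $V(H')\cup(V(C)\setminus\{x,y\})$ lies in a single component (because $|N_C(H')|\geqslant 3$ forces $H'$ to touch $V(C)\setminus\{x,y\}$), and this component has at least $3+3=6$ vertices, contradicting the conclusion $|V(T)|\leqslant 5$ of Lemma~\ref{lemma: L123}. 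This sidesteps all case analysis.

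Your route---building two disjoint cycles by routing through $T$ and $T'$ along disjoint $C$-arcs, then handling the unique obstructed configuration (two copies of $L_1$ on the same triple) with an explicit $7$-cycle---is a valid alternative, and the case analysis over the possible shapes of $N_C(T)$ and $N_C(T')$ on a $5$-cycle does go through as you sketch. One minor wording issue: the sentence ``not every vertex of $G$ can lie on a shortest cycle'' is not quite what you need; what you actually use (and correctly argue via the chord observation) is that $V(G)\neq V(C)$ for your fixed shortest cycle $C$, so that $G-C$ has a component. The paper's proof is shorter and reuses the main structural lemma rather than unpacking it, while yours is more self-contained once $L_1,L_2,L_3$ are in hand.
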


\begin{proof}
Assume the opposite that $G$ does not contain two disjoint cycles. Let $C$ be an arbitrary shortest cycle of $G$, and let $H$ be an arbitrary component of $G-C$. By Claim \ref{claim: 2-con}, Claim \ref{claim: rho} and Lemma \ref{lemma: L123}, 
we can get that $|C|=5$ and $G[V(C)\cup V(H)]$ is isomorphic to $L_1, L_2$ or $L_3$. 
In view of the three graphs $L_1,L_2,L_3$, 
we can also get that $3\leqslant |V(H)|\leqslant 5$ and $|N_C(H)|\geqslant 3$. 

Notice that each of $L_1,L_2,L_3$ has a second 5-cycle $C'$ which has exactly one common edge with $C$, say $V(C)\cap V(C')=\{x,y\}$ and $E(C)\cap E(C')=\{xy\}$. 

If $G-C$ has exactly one component $H$, then $G=G[V(C)\cup V(H)]$ is isomorphic to $L_1, L_2$ or $L_3$,
a contradiction to the assumption that $e(G)>15q+\frac{3}{2}r$. So $G-C$ has a second component $H'$. In this case $G-C'$ has a component $H''$ that contains $V(H')\cup(V(C)\backslash\{x,y\})$, which has order at least 6, contradicting Lemma \ref{lemma: L123}.
\end{proof}

\begin{claim}\label{claim: adjacent to two cycles}
$G$ contains two disjoint induced cycles $C_1,C_2$ such that every component of $G-C_1\cup C_2$ has neighbors in both $C_1$ and $C_2$.
\end{claim}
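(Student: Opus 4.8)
The plan is to obtain the required pair as an extremal choice and then to show that extremality forces both properties, via a local re-routing argument. Call a component of $G-(C_1\cup C_2)$ \emph{bad} if it has a neighbour in at most one of $C_1,C_2$, and let $\beta(C_1,C_2)$ denote the number of vertices of $G$ lying in bad components. Starting from the two disjoint cycles supplied by Claim~\ref{claim: 2 disjoint cycles}, choose among all pairs $(C_1,C_2)$ of disjoint cycles of $G$ one that minimizes $\beta(C_1,C_2)$, and, subject to that, minimizes $|V(C_1)|+|V(C_2)|$. I claim $\beta(C_1,C_2)=0$ and both cycles are induced; recall that the standing hypotheses already give that $G$ is $2$-connected, essentially $3$-connected with every $2$-vertex-cut non-adjacent and of the ``one trivial plus one nontrivial'' type (Claim~\ref{claim: essentially-3-con}), and that $G$ has no $4$-cycle.

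Inducedness first. If $C_1$ had a chord $uv$, then $C_1+uv$ splits into two cycles through $uv$, each disjoint from $C_2$ and shorter than $C_1$ (the two arcs between $u$ and $v$ on $C_1$ have length $\geq 2$). Replacing $C_1$ by the one of these whose discarded arc ends up in a component of the new complement that still meets $C_2$ leaves $\beta$ unincreased while strictly lowering the total length, contradicting the choice; and such a sub-cycle exists, since otherwise $\{u,v\}$ would separate $C_2$ from two non-trivial pieces of $G$, which by essential $3$-connectivity would force $|C_1|=4$, impossible. So $C_1$, and likewise $C_2$, is induced.

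Now suppose $\beta>0$, fix a bad component $H$, and say (after relabelling) $N_{C_2}(H)=\emptyset$, so $H$ attaches only to $C_1$; $2$-connectivity gives $k:=|N_{C_1}(H)|\geq 2$. If $k\geq 3$, the $k$ ``gap'' arcs into which $N_{C_1}(H)$ cuts $C_1$ partition the edge set of $C_1$, so the shortest one, say $A$ between consecutive attachment vertices $a,b$, has $|A|\leq|C_1|/3$; pick a path $P$ from $a$ to $b$ with interior in $H$ (so $|P|\geq 2$) and set $C_1'=P\cup(C_1-\mathrm{int}(A))$. The heart of the argument is the bookkeeping: a component of $G-(C_1\cup C_2)$ meeting both cycles stays inside a component of $G-(C_1'\cup C_2)$ meeting both, so the only vertices changing status are $\mathrm{int}(A)$ (freed) and $\mathrm{int}(P)$ (absorbed onto the new cycle), whence $\beta(C_1',C_2)\leq\beta(C_1,C_2)+(|A|-1)-(|P|-1)$, and an analogous estimate holds for the alternative re-route $P\cup A$. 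Feeding these inequalities back into the minimality of $\beta$ pins down $|P|$ and forces $\mathrm{int}(A)$ to lie in a ``$C_1$-only'' bad component, and one iterates (or applies Lemma~\ref{LePQ1Q2Q3} to the resulting system of paths) to produce either a $(1\bmod 3)$-cycle or a forbidden essential cut. When $k=2$, say $N_{C_1}(H)=\{a,b\}$: the $C_2$-side of the cut $\{a,b\}$ is non-trivial, so Claim~\ref{claim: essentially-3-con} forces $H=\{h\}$, a single degree-$2$ vertex with $N_G(h)=\{a,b\}$ and $ab\notin E(G)$; moreover no vertex other than $h$ is adjacent to both $a$ and $b$, for that would give a $4$-cycle through $h$, so the shorter arc of $C_1$ between $a$ and $b$ has length $\geq 3$, and re-routing $C_1$ onto the long arc together with $ahb$ brings us back to the same bookkeeping (now the re-route is strictly shorter, so it cannot leave $\beta$ unincreased, forcing $\mathrm{int}(A)$ to sit in a bad component chained only to further $C_1$-only bad components).

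I expect the main obstacle to be exactly this endgame: making the bookkeeping rigorous --- in particular ruling out that a previously ``good'' component is split off into a ``bad'' piece by the re-routing --- and, in the situation where no single re-route strictly decreases $\beta$, extracting a contradiction from the accumulated structure of the (possibly several) bad components chained together along arcs of $C_1$, by playing the absence of $4$-cycles and essential $3$-connectivity against each other.
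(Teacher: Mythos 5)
The paper obtains the pair $C_1,C_2$ by maximizing the component of $G-C_1$ containing $C_2$ (and then the analogous quantity for $C_2$), and then invokes Lemma~\ref{lemma: two cycles} — a Bondy--Vince-type structural lemma about bridges of a cycle — to conclude that each $C_i$ has a \emph{single} bridge. That single-bridge fact gives everything at once: $C_i$ has no chord (a chord would be an extra bridge) and $G-C_i$ is connected, so every component of $G-C_1\cup C_2$ meets both cycles. You instead minimize a bespoke quantity $\beta$ (vertices in bad components) and then total length, and try to reach the same conclusion by local re-routing. This is a genuinely different route, and in principle a re-routing argument of this kind can substitute for Lemma~\ref{lemma: two cycles}; but as written it is incomplete, and not merely in polish.

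The inducedness step is essentially fine: if one discards each arc of the chord in turn and in both cases the discarded arc is cut off from $C_2$ in the new complement, a short path-chasing argument shows $\{u,v\}$ is a vertex cut, and since $uv\in E(G)$ this already contradicts Claim~\ref{claim: essentially-3-con}. The real problem is the $\beta>0$ case. Your bookkeeping inequality
$\beta(C_1',C_2)\leq\beta(C_1,C_2)+(|A|-1)-(|P|-1)$
is correct, and combined with the extremal choice it forces $|P|\leq|A|$ (and, when $\beta$ cannot drop, forces $\operatorname{int}(A)$ to land in a new $C_1$-only bad component). But the argument stops exactly there: you say ``one iterates (or applies Lemma~\ref{LePQ1Q2Q3} \dots) to produce either a $(1\bmod3)$-cycle or a forbidden essential cut'' and then explicitly flag that ``the main obstacle'' is ``making the bookkeeping rigorous'' and ``extracting a contradiction from the accumulated structure of the (possibly several) bad components chained together.'' That is the content of the claim, not a detail. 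Concretely: after one re-route you have a new bad arc hanging off $C_1'$; nothing you have said prevents the process from producing an endless chain of re-routes (each trading one bad component for another) or, in the $k=2$ case, explains how to turn the strict increase $\beta(C_1',C_2)>\beta(C_1,C_2)$ into a contradiction rather than just structural information. The paper avoids this entirely because Lemma~\ref{lemma: two cycles} already packages the ``no second bridge'' conclusion: your approach would in effect need to re-derive that lemma, and the missing endgame is precisely where that work lives. As it stands, this is a genuine gap.
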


\begin{proof}
The existence of two disjoint induced cycles follows directly from Claim \ref{claim: 2 disjoint cycles}. Let $C_1,C_2$ be two such cycles of $G$ satisfying that the component of $G-C_1$ containing $C_2$ is as large as possible, and then the component of $G-C_2$ containing $C_1$ is as large as possible. By Lemma \ref{lemma: two cycles}, for each $i\in \{1,2\}$, the cycle $C_i$ has only one bridge in $G$; since otherwise $G$ has a vertex cut $\{x,y\}$ such that $G-\{x,y\}$ has at least three components. It follows that $C_i$ has no chord and $G-C_i$ has exactly one component.

Let $H$ be an arbitrary component of $G-C_1\cup C_2$. Since $G$ is 2-connected, we have $E(H,C_1\cup C_2)\neq \emptyset$. If $N_{C_2}(H)=\emptyset$, then $G-C_1$ will be disconnected, a contradiction. So we have that $N_{C_2}(H)\neq\emptyset$, and similarly, $N_{C_1}(H)\neq\emptyset$.
\end{proof}

By Claim \ref{claim: adjacent to two cycles}, let $C_1,C_2$ be two disjoint induced cycles such that every component of $G-C_1\cup C_2$ has neighbors in both $C_1$ and $C_2$. Two disjoint paths from $C_1$ to $C_2$ will be called \textit{clashing} if their origins are mod-non-diagonal in $C_1$ and their termini are mod-non-diagonal in $C_2$. From Lemma \ref{LeC1C2P1P2}, we see that $G$ contains no two clashing paths from $C_1$ to $C_2$.

By Claim \ref{claim: essentially-3-con}, $G$ is essentially 3-connected. So there exist three disjoint paths from $C_1$ to $C_2$, say $P^1,P^2,P^3$, with $V(C_i)\cap V(P^j)=\{x_i^j\}$, $i=1,2$ and $j=1,2,3$. We give orientations for $C_i$, $i=1,2$, such that $x_i^1,x_i^2,x_i^3$ appear in this order along $C_i$, see Figure \ref{fig:3paths}. For convenience, we denote by 
$$\begin{array}{lll}
|\overrightarrow{C_1}[x_1^1,x_1^2]|=\alpha_1, &|\overrightarrow{C_1}[x_1^2,x_1^3]|=\beta_1, 
&|\overrightarrow{C_1}[x_1^3,x_1^1]|=\gamma_1;\\
|\overrightarrow{C_2}[x_2^1,x_2^2]|=\alpha_2, &|\overrightarrow{C_2}[x_2^2,x_2^3]|=\beta_2, 
&|\overrightarrow{C_2}[x_2^3,x_2^1]|=\gamma_2.
\end{array}$$
Note that $|C_1|=\alpha_1+\beta_1+\gamma_1$ and $|C_2|=\alpha_2+\beta_2+\gamma_2$. Since $G$ contains no $(1\bmod 3)$-cycles, we have $|C_i|\not\equiv 1\bmod 3$ for any $i=1,2$. 

\begin{figure}[ht]
\centering
\begin{tikzpicture}[scale=0.6]
\draw(0,0) circle (2); 
\draw(7,0){coordinate (o2)} circle (2);
\coordinate (x11) at (60:2); \node at (60:1.5) {$x_1^1$};
\coordinate (x12) at (0:2); \node at (0:1.5) {$x_1^2$};
\coordinate (x13) at (300:2); \node at (300:1.5) {$x_1^3$};
\foreach \x in {1,2,3} \draw[fill=black] (x1\x) circle (0.1);
\node at (30:2.3) {$\alpha_1$};
\node at (330:2.3) {$\beta_1$};
\node at (180:2.3) {$\gamma_1$}; 
\path(o2)--+(120:2) coordinate (x21); \path(o2)--+(120:1.5){node{$x_2^1$}};
\path(o2)--+(180:2) coordinate (x22); \path(o2)--+(180:1.5){node{$x_2^2$}};
\path(o2)--+(240:2) coordinate (x23); \path(o2)--+(240:1.5){node{$x_2^3$}};
\foreach \x in {1,2,3} \draw[fill=black] (x2\x) circle (0.1);
\path(o2)--+(150:2.3){node{$\alpha_2$}};
\path(o2)--+(210:2.3){node{$\beta_2$}};
\path(o2)--+(0:2.3){node{$\gamma_2$}}; 
\coordinate (p1) at (3.5,2); \coordinate (p2) at (3.5,0); \coordinate (p3) at (3.5,-2);
\draw(x11) to [out=10, in=180] (p1); \draw(p1) to [out=0, in=170] (x21);
\draw(x12)--(x22);
\draw(x13) to [out=-10, in=180] (p3); \draw(p3) to [out=0, in=-170] (x23); 
\node at (135:2.3) {$C_1$}; \path(o2)--+(45:2.3){node{$C_2$}};
\foreach \x in {1,2,3} \node[above] at (p\x) {$P^\x$};
\draw[->] (100:2.3) arc (100:80:2.3);
\path(7,0)--+(80:2.3) coordinate (originalofarc);
\draw[->] (originalofarc) arc (80:100:2.3);
\end{tikzpicture}
\caption{Cycles $C_1,C_2$ and paths $P^1,P^2,P^3$.}\label{fig:3paths}
\end{figure}

In the following of the proof, we use $a\equiv b$ to refer that $a\equiv b\bmod 3$, and $(a_1,a_2,a_3)\equiv(b_1,b_2,b_3)$ means that $a_i\equiv b_i\bmod 3$ for $i=1,2,3$. For convenience, we set $M=C_1\cup C_2\cup P^1\cup P^2\cup P^3$.

\begin{claim}\label{ClEitherC1C2}
Either $|C_1|\equiv 2$ or $|C_2|\equiv 2$.
\end{claim}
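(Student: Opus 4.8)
The plan is to argue by contradiction: suppose $|C_1|\equiv|C_2|\equiv 0\pmod 3$ and produce two clashing paths from $C_1$ to $C_2$, which contradicts the no-clashing property coming from Lemma~\ref{LeC1C2P1P2}. The first thing to record is that when $|C_i|\equiv 0$, Lemma~\ref{LenDiagonal}(1) says that two vertices of $C_i$ are mod-diagonal precisely when the arcs of $C_i$ between them have length $\equiv 0\pmod 3$; this is used throughout. I would then reduce to the case where all three arcs of one cycle are $\equiv 0$: applying the no-clashing property to each of the three pairs among $\{P^1,P^2,P^3\}$ gives, for each of the slots $\alpha,\beta,\gamma$, that the corresponding arc of $C_1$ or of $C_2$ is $\equiv 0\pmod 3$; since the three arcs of a cycle sum to its length $\equiv 0$, each cycle has either none, exactly one, or all three of its arcs $\equiv 0$, and a short case check on these options forces (after possibly swapping $C_1$ and $C_2$) all three arcs $\alpha_1,\beta_1,\gamma_1$ of $C_1$ to be $\equiv 0$. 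In particular each of them is $\geq 3$, so every arc of $C_1$ contains at least two internal vertices.

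Next I would build an auxiliary transversal path and use it to force all arcs of $C_2$ to be $\equiv 0$ as well. Since $N_2(G)$ is independent and $C_1$ is an induced (hence chordless) cycle with a unique bridge (Claim~\ref{claim: adjacent to two cycles}), among the two consecutive vertices of $\overrightarrow{C_1}(x_1^1,x_1^2)$ at distance $1$ and $2$ from $x_1^1$ at least one, call it $w$, has degree $\geq 3$ in $G$, hence a neighbour off $C_1$. As $G-C_1$ is connected and contains $C_2$ (again Claim~\ref{claim: adjacent to two cycles}), a path leaving $w$ reaches $C_2$; truncating it at its first vertex on $C_2\cup P^1\cup P^2\cup P^3$ and, if that vertex is interior to some $P^j$, continuing along $P^j$ to $C_2$, one obtains a path $R$ from $w$ to some $v\in V(C_2)$, internally disjoint from $C_1\cup C_2$ and vertex-disjoint from at least two of $P^1,P^2,P^3$. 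Because $w$ lies at distance $1$ or $2$ (so $\not\equiv 0$) from every $x_1^j$ along $C_1$, $w$ is mod-non-diagonal with each $x_1^j$; so for the (at least two) paths $P^j$ disjoint from $R$, the no-clashing property forces $v$ to be mod-diagonal with $x_2^j$. Hence $v$ is mod-diagonal with at least two of $x_2^1,x_2^2,x_2^3$, and Lemma~\ref{LenDiagonal}(2) then forces all three arcs $\alpha_2,\beta_2,\gamma_2$ of $C_2$ to be $\equiv 0\pmod 3$.

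Finally, with all six arcs $\equiv 0$, I would run the same construction with $C_1$ and $C_2$ exchanged to get a path $R'$ from a vertex $w_2\in V(C_2)$ at distance $1$ or $2$ from every $x_2^j$ to a vertex $v_1\in V(C_1)$ mod-diagonal with every $x_1^j$, again disjoint from at least two of the $P^j$. If $R$ and $R'$ are vertex-disjoint, then they are two disjoint $C_1$–$C_2$ paths whose origins $w,v_1$ on $C_1$ are mod-non-diagonal (the arc between them has length $\not\equiv 0$) and whose termini $v,w_2$ on $C_2$ are mod-non-diagonal — a clashing pair, contradicting Lemma~\ref{LeC1C2P1P2}. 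If instead $R$ and $R'$ meet, splice them at the first vertex of $R$ (starting from $w$) lying on $R'$; the result is one $C_1$–$C_2$ path from $w$ to $w_2$ still disjoint from some $P^j$ avoided by both $R$ and $R'$ (such $j$ exists since each of $R,R'$ avoids at least two of the three paths), and $\{w,x_1^j\}$ and $\{w_2,x_2^j\}$ are each mod-non-diagonal, so this path and $P^j$ are clashing — the same contradiction. Since $|C_i|\not\equiv 1\pmod 3$, this yields $|C_1|\equiv 2$ or $|C_2|\equiv 2$.

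The step I expect to fight with is the construction of the auxiliary paths $R$ and $R'$ and the splicing: one must make sure that after the re-routing $R$ is internally disjoint from $C_1\cup C_2$, that the endpoint $v$ remains usable when the re-routing terminates at some $x_2^j$ (so that ``$v$ mod-diagonal with the other two $x_2^i$'' still makes sense and still forces all arcs of $C_2$ to be $\equiv 0$), and that the spliced object really is a path vertex-disjoint from a common $P^j$. These are routine but error-prone bookkeeping points rather than conceptual hurdles; everything else is just the dictionary between ``mod-diagonal'' and ``arc $\equiv 0$'' together with the no-clashing property.
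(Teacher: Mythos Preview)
Your argument is correct, and the overall toolkit matches the paper's (auxiliary paths from a degree-$\geq 3$ vertex at distance $1$ or $2$ from an $x_i^j$, rerouted through some $P^j$ if necessary, combined with the no-clashing property), but the organization is genuinely different. The paper does not reduce to $(\alpha_1,\beta_1,\gamma_1)\equiv(0,0,0)$; instead it splits into Case~A, where $(\alpha_1,\beta_1,\gamma_1)$ is one of $(1,1,1),(2,2,2),(0,1,2)$ up to symmetry, and Case~B, the $(0,0,0)$ case. In Case~A the non-zero arcs on $C_1$ force $(\alpha_2,\beta_2,\gamma_2)\equiv(0,0,0)$ directly, and a single auxiliary path $Q$ landing at $x_2^{1+}$ or $x_2^{1++}$ already clashes with $P^2$ or $P^3$; Case~B is then reduced to Case~A by one auxiliary path $R$ that, together with two of the $P^j$, produces a new triple whose $C_1$-arcs are a permutation of $(0,1,2)$. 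Your route instead first observes (via the three pairwise no-clashing constraints and the parity of the number of zero arcs) that one cycle must have all arcs $\equiv 0$, uses one auxiliary path to force the other cycle into the same state, and then needs a second auxiliary path plus a splicing argument to finish. The paper's version is a bit more economical---one auxiliary path suffices in each branch---while yours is more symmetric and avoids the explicit $(1,1,1)/(2,2,2)/(0,1,2)$ case list.

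Two small points worth tightening in your write-up. First, ``$v$ is mod-diagonal with at least two of $x_2^1,x_2^2,x_2^3$, and Lemma~\ref{LenDiagonal}(2) then forces all three arcs $\equiv 0$'' is true but deserves a line: if $v\notin\{x_2^1,x_2^2,x_2^3\}$ then $R$ is disjoint from all three $P^j$, so $v$ is mod-diagonal with all three and Lemma~\ref{LenDiagonal}(2) applied to each pair gives all arcs; if $v=x_2^m$ then mod-diagonality with the other two gives two of the three arcs, and the third follows from $\alpha_2+\beta_2+\gamma_2\equiv 0$. Second, in the splicing step you should note that the spliced path goes from $w$ on $C_1$ to $w_2$ on $C_2$ (not to $v_1$), and that a common $P^j$ avoided by both $R$ and $R'$ exists by pigeonhole; these are exactly the ``routine but error-prone bookkeeping points'' you flagged, and they do go through.
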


\begin{proof}
Recall that $|C_i|\not\equiv 1$, $i=1,2$. 
Assume the opposite that $|C_1|\equiv|C_2|\equiv 0$. 
Then $\alpha_1+\beta_1+\gamma_1\equiv 0$, 
implying that 
$(\alpha_1,\beta_1,\gamma_1)\equiv(0,0,0),\ (1,1,1),\ (2,2,2)$ or $(0,1,2)$, up to symmetric.

\medskip\noindent\textbf{Case A.} $(\alpha_1,\beta_1,\gamma_1)\equiv(1,1,1),\ (2,2,2)$ or $(0,1,2)$.

In this case both $\{x_1^1,x_1^3\}$ and $\{x_1^2,x_1^3\}$ are mod-non-diagonal pairs in $C_1$. If $\{x_2^1,x_2^3\}$ or $\{x_2^2,x_2^3\}$ is a mod-non-diagonal pair in $C_2$, then either $P^1,P^3$ or $P^2,P^3$ are two clashing paths, a contradiction. 
So both $\{x_2^1,x_2^3\}$ and $\{x_2^2,x_2^3\}$ are mod-diagonal pairs in $C_2$. That is, $\beta_2\equiv\gamma_2\equiv 0$, and then $\alpha_2\equiv 0$ since $|C_2|\equiv 0$.
 
By $\alpha_2\equiv 0$, there exist at least two vertices in $\overrightarrow{C_2}(x_2^1,x_2^2)$.
Let $y_2=x_2^{1+}$ and $z_2=x_2^{1++}$ be the next two vertices after $x_2^1$ along $C_2$. By Claim \ref{claim: rho}, we have $\max\{d_G(y_2),d_G(z_2)\}\geqslant 3$. Since $C_2$ is an induced cycle, we have $N_G(\{y_2,z_2\})\backslash V(C_2)\neq\emptyset$. By the choice of $C_1,C_2$, there exists a path $Q_1$ from $V(M)\backslash V(C_2)$ to $\{y_2,z_2\}$ with all internal vertices in $V(G)\backslash V(M)$. Let $u_1\in V(M)\backslash V(C_2)$, $v_Q\in\{y_2,z_2\}$ be the origin and the terminus of $Q_1$, respectively. We set
\[
  Q=\left\{\begin{array}{l}
  Q_1,\\
  P^1[x_1^1,u_1]u_1Q_1,\\
  P^2[x_1^2,u_1]u_1Q_1,\\
  P^3[x_1^3,u_1]u_1Q_1,
\end{array}\right.\mbox{ and }u_Q=\left\{\begin{array}{ll}
  u_1,   & \mbox{if }u_1\in V(C_1),\\ 
  x_1^1, & \mbox{if }u_1\in V(P^1)\backslash\{x_1^1,x_2^1\},\\
  x_1^2, & \mbox{if }u_1\in V(P^2)\backslash\{x_1^2,x_2^2\},\\
  x_1^3, & \mbox{if }u_1\in V(P^3)\backslash\{x_1^3,x_2^3\}.
\end{array}\right.
\]
We remark that $Q$ and $P^i$ ($i=1,2,3$) are disjoint unless $u_Q=x_1^i$, see Figure \ref{FiQ4type}.

\begin{figure}[ht]
\centering
\begin{tikzpicture}[scale=0.6]

\begin{scope}
\draw(0,0) circle (2); 
\draw(7,0){coordinate (o2)} circle (2); 
\coordinate (x11) at (60:2); \node at (60:1.5) {$x_1^1$}; 
\coordinate (x12) at (0:2); \node at (0:1.5) {$x_1^2$}; 
\coordinate (x13) at (300:2); \node at (300:1.5) {$x_1^3$}; 
\foreach \x in {1,2,3} \draw[fill=black] (x1\x) circle (0.1);
\path(o2)--+(120:2) coordinate (x21); \path(o2)--+(120:1.5){node{$x_2^1$}};
\path(o2)--+(180:2) coordinate (x22); \path(o2)--+(180:1.5){node{$x_2^2$}};
\path(o2)--+(240:2) coordinate (x23); \path(o2)--+(240:1.5){node{$x_2^3$}};
\foreach \x in {1,2,3} \draw[fill=black] (x2\x) circle (0.1);
\coordinate (p1) at (3.5,2); \coordinate (p2) at (3.5,0); \coordinate (p3) at (3.5,-2);
\draw(x11) to [out=10, in=180] (p1); \draw(p1) to [out=0, in=170] (x21);
\draw(x12)--(x22);
\draw(x13) to [out=-10, in=180] (p3); \draw(p3) to [out=0, in=-170] (x23); 
\path(o2)--+(140:2) {coordinate (y2)}; \path(o2)--+(140:1.5){node{$y_2$}};
\draw[fill=black] (y2) {node[left] {$v_Q$}} circle (0.1);
\path(o2)--+(160:2) coordinate (z2); \path(o2)--+(160:1.5){node{$z_2$}};
\draw[fill=black] (z2) circle (0.1);
\draw[thick] (x21) arc (120:160:2);
\draw[fill=black] (100:2) {coordinate (u1)} circle (0.1);
\node[above left] at (95:1.8) {$u_Q=u_1$};
\draw[blue] (y2) to [bend right=50] (u1);
\node[above] at (4,2.5) {$Q_1$};
\node[below] at (3.5,-2.2) {(i) $u_1\in V(C_1)$};
\end{scope}

\begin{scope}[xshift=13cm]
\draw(0,0) circle (2); 
\draw(7,0){coordinate (o2)} circle (2); 
\coordinate (x11) at (60:2); \node at (60:1.5) {$x_1^1$}; 
\coordinate (x12) at (0:2); \node at (0:1.5) {$x_1^2$}; 
\coordinate (x13) at (300:2); \node at (300:1.5) {$x_1^3$}; 
\foreach \x in {1,2,3} \draw[fill=black] (x1\x) circle (0.1);
\path(o2)--+(120:2) coordinate (x21); \path(o2)--+(120:1.5){node{$x_2^1$}};
\path(o2)--+(180:2) coordinate (x22); \path(o2)--+(180:1.5){node{$x_2^2$}};
\path(o2)--+(240:2) coordinate (x23); \path(o2)--+(240:1.5){node{$x_2^3$}};
\foreach \x in {1,2,3} \draw[fill=black] (x2\x) circle (0.1);
\coordinate (p1) at (3.5,2); \coordinate (p2) at (3.5,0); \coordinate (p3) at (3.5,-2);
\draw[blue] (x11) to [out=10, in=180] (p1); \draw(p1) to [out=0, in=170] (x21);
\draw(x12)--(x22);
\draw(x13) to [out=-10, in=180] (p3); \draw(p3) to [out=0, in=-170] (x23); 
\path(o2)--+(140:2) {coordinate (y2)}; \path(o2)--+(140:1.5){node{$y_2$}};
\draw[fill=black] (y2) circle (0.1);
\path(o2)--+(160:2) coordinate (z2); \path(o2)--+(160:1.5){node{$z_2$}};
\draw[fill=black] (z2) circle (0.1);  \draw[thick] (x21) arc (120:160:2);
\node[left] at (5.4,1) {$v_Q$};
\draw[fill=black] (p1) {coordinate (u1)} {node[above] {$u_1$}} circle (0.1);
\node[above] at (x11) {$u_Q$};
\draw[blue] (z2) to [bend left=30] (u1);
\node at (3.7,1) {$Q_1$};
\node[below] at (3.5,-2.2) {(ii) $u_1\in V(P^1)\backslash\{x_1^1,x_2^1\}$};
\end{scope}

\begin{scope}[yshift=-6cm]
\draw(0,0) circle (2); 
\draw(7,0){coordinate (o2)} circle (2); 
\coordinate (x11) at (60:2); \node at (60:1.5) {$x_1^1$}; 
\coordinate (x12) at (0:2); \node at (0:1.5) {$x_1^2$}; 
\coordinate (x13) at (300:2); \node at (300:1.5) {$x_1^3$}; 
\foreach \x in {1,2,3} \draw[fill=black] (x1\x) circle (0.1);
\path(o2)--+(120:2) coordinate (x21); \path(o2)--+(120:1.5){node{$x_2^1$}};
\path(o2)--+(180:2) coordinate (x22); \path(o2)--+(180:1.5){node{$x_2^2$}};
\path(o2)--+(240:2) coordinate (x23); \path(o2)--+(240:1.5){node{$x_2^3$}};
\foreach \x in {1,2,3} \draw[fill=black] (x2\x) circle (0.1);
\coordinate (p1) at (3.5,2); \coordinate (p2) at (3.5,0); \coordinate (p3) at (3.5,-2);
\draw(x11) to [out=10, in=180] (p1); \draw(p1) to [out=0, in=170] (x21);
\draw[blue] (x12)--(p2); \draw(p2)--(x22);
\draw(x13) to [out=-10, in=180] (p3); \draw(p3) to [out=0, in=-170] (x23); 
\path(o2)--+(140:2) {coordinate (y2)}; \path(o2)--+(140:1.5){node{$y_2$}};
\draw[fill=black] (y2) circle (0.1);
\path(o2)--+(160:2) coordinate (z2); \path(o2)--+(160:1.5){node{$z_2$}};
\draw[fill=black] (z2) circle (0.1);  \draw[thick] (x21) arc (120:160:2);
\node[left] at (5.7,1.6) {$v_Q$};
\draw[fill=black] (p2) {coordinate (u1)} {node[below] {$u_1$}} circle (0.1);
\node[above right] at (1.8,-0.2) {$u_Q$};
\draw[blue] (y2) to [bend right=40] (u1);
\node at (3.5,1) {$Q_1$};
\node[below] at (3.5,-2.2) {(iii) $u_1\in V(P^2)\backslash\{x_1^2,x_2^2\}$};
\end{scope}

\begin{scope}[xshift=13cm, yshift=-6cm]
\draw(0,0) circle (2); 
\draw(7,0){coordinate (o2)} circle (2); 
\coordinate (x11) at (60:2); \node at (60:1.5) {$x_1^1$}; 
\coordinate (x12) at (0:2); \node at (0:1.5) {$x_1^2$}; 
\coordinate (x13) at (300:2); \node at (300:1.5) {$x_1^3$}; 
\foreach \x in {1,2,3} \draw[fill=black] (x1\x) circle (0.1);
\path(o2)--+(120:2) coordinate (x21); \path(o2)--+(120:1.5){node{$x_2^1$}};
\path(o2)--+(180:2) coordinate (x22); \path(o2)--+(180:1.5){node{$x_2^2$}};
\path(o2)--+(240:2) coordinate (x23); \path(o2)--+(240:1.5){node{$x_2^3$}};
\foreach \x in {1,2,3} \draw[fill=black] (x2\x) circle (0.1);
\coordinate (p1) at (3.5,2); \coordinate (p2) at (3.5,0); \coordinate (p3) at (3.5,-2);
\draw(x11) to [out=10, in=180] (p1); \draw(p1) to [out=0, in=170] (x21);
\draw(x12)--(x22);
\draw[blue] (x13) to [out=-10, in=180] (p3); \draw(p3) to [out=0, in=-170] (x23); 
\path(o2)--+(140:2) {coordinate (y2)}; \path(o2)--+(140:1.5){node{$y_2$}};
\draw[fill=black] (y2) circle (0.1);
\path(o2)--+(160:2) coordinate (z2); \path(o2)--+(160:1.5){node{$z_2$}};
\draw[fill=black] (z2) circle (0.1);  \draw[thick] (x21) arc (120:160:2);
\node[left] at (5.4,1) {$v_Q$};
\draw[fill=black] (p3) {coordinate (u1)} circle (0.1);  \node[above left] at (3.7,-2.1) {$u_1$};
\node[below] at (303:2) {$u_Q$};
\draw[blue] (z2) to [bend right=30] (u1);
\node at (4.2,-0.8) {$Q_1$};
\node[below] at (3.5,-2.2) {(iv) $u_1\in V(P^3)\backslash\{x_1^3,x_2^3\}$};
\end{scope}

\end{tikzpicture}
\caption{The path $Q$ in Claim \ref{ClEitherC1C2} (the blue path).}\label{FiQ4type}
\end{figure}
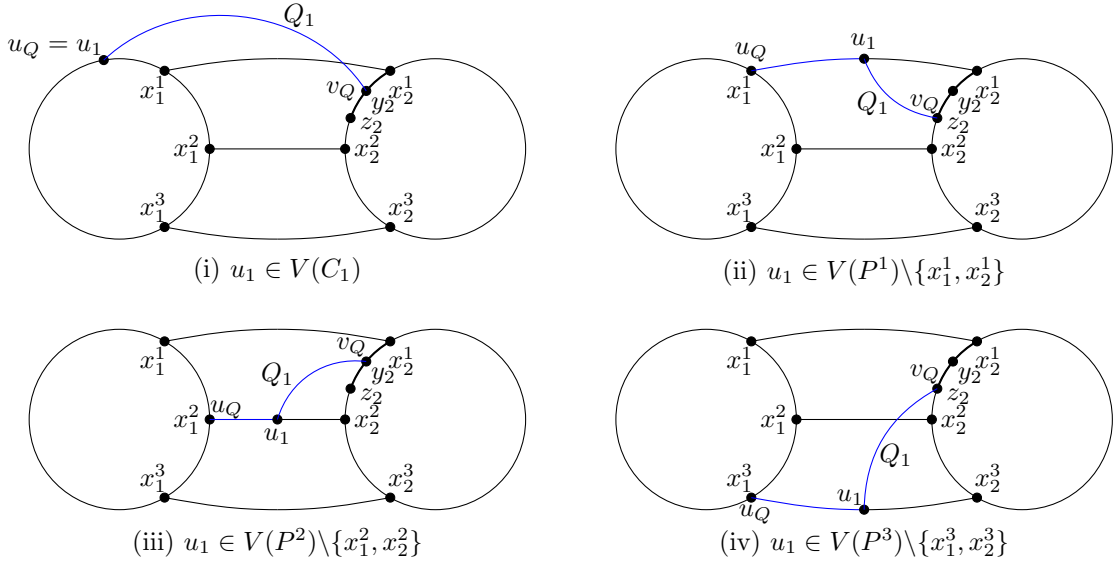

We claim that either $u_Q,x_1^2$ or $u_Q,x_1^3$ are mod-non-diagonal in $C_1$. This is true for $u_Q\in\{x_1^2,x_1^3\}$ since $x_1^2,x_1^3$ are mod-non-diagonal, and if $u_Q\in V(C_1)\backslash\{x_1^2,x_1^3\}$, then it can be deduced from Lemma \ref{LenDiagonal}. In any case we have that either $u_Q,x_1^2$ or $u_Q,x_1^3$ are mod-non-diagonal in $C_1$. Also note that both $v_Q,x_2^2$ and $v_Q,x_2^3$ are mod-non-diagonal in $C_2$. It follows that either $Q,P^2$ or $Q,P^3$ are two clashing paths, a contradiction.

\medskip\noindent\textbf{Case B.} $(\alpha_1,\beta_1,\gamma_1)\equiv (0,0,0)$.

Since $\alpha_1\equiv 0$, there exist at least two vertices in $\overrightarrow{C_1}(x_1^1,x_1^2)$. Let $y_1=x_1^{1+}$ and $z_1=x_1^{1++}$. By Claim \ref{claim: rho}, there exists a path $R_1$ from $\{y_1,z_1\}$ to $V(M)\backslash V(C_1)$ with all internal vertices in $V(G)\backslash V(M)$. Let $u_R\in\{y_1,z_1\}$ be the origin of $R_1$ and $v_1\in V(M)\backslash V(C_1)$ be the terminus of $R_1$. If $v_1\in V(C_2)$, then let $R=R_1$ and $v_R=v_1$; if $v_1\in V(P^i)\backslash\{x_1^i,x_2^i\}$ for some $i=1,2,3$, then let $R=R_1v_1P^i[v_1,x_2^i]$ and $v_R=x_2^i$.

Now $R$ is disjoint with at least two paths of $P^1,P^2,P^3$. Notice that $|\overrightarrow{C_1}[u_R,x_1^i]|\not\equiv 0$, for all $i=1,2,3$. It follows that there are three disjoint paths from $C_1$ to $C_2$ such that their origins separate $C_1$ into three segments of lengths $0,1,2\bmod 3$. Thus we can prove the assertion as in Case A.
\end{proof}

\begin{claim}\label{ClBothC1C2}
    Both $|C_1|\equiv 2$ and $|C_2|\equiv 2$.
\end{claim}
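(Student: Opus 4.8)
The plan is to argue by contradiction, running the proof almost in parallel with that of Claim~\ref{ClEitherC1C2}. By Claim~\ref{ClEitherC1C2} and the symmetry between $C_1$ and $C_2$ we may assume $|C_2|\equiv 2$; since also $|C_1|\not\equiv 1$, it suffices to derive a contradiction from $|C_1|\equiv 0$. Under this assumption $(\alpha_1,\beta_1,\gamma_1)$ is, up to relabeling, one of $(0,0,0),(1,1,1),(2,2,2),(0,1,2)$, and I would treat these in turn. Two facts are used throughout: by Lemma~\ref{LenDiagonal}(1), a pair $\{x_i^a,x_i^b\}$ is mod-diagonal in $C_i$ iff the $C_i$-arc joining them has length $\equiv-|C_i|\bmod 3$, i.e.\ $\equiv 0$ when $|C_i|\equiv 0$ and $\equiv 1$ when $|C_i|\equiv 2$; and, by Claim~\ref{claim: rho}, $N_2(G)$ is independent, so every arc of $C_1$ or $C_2$ whose length is $\equiv 0\bmod 3$ (hence $\geqslant 3$) has two consecutive interior vertices, at least one of degree $\geqslant 3$ in $G$, which — since $C_1,C_2$ are induced by Claim~\ref{claim: adjacent to two cycles} — spawns an auxiliary path to $V(M)$ exactly as constructed in Claim~\ref{ClEitherC1C2}.

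If $(\alpha_1,\beta_1,\gamma_1)\equiv(1,1,1)$ or $(2,2,2)$, all three pairs of origins are mod-non-diagonal in $C_1$, so, since $G$ has no clashing paths (Lemma~\ref{LeC1C2P1P2}), all three pairs of termini are mod-diagonal in $C_2$, forcing $\alpha_2\equiv\beta_2\equiv\gamma_2\equiv 1$ and $|C_2|\equiv 0$, a contradiction. If $(\alpha_1,\beta_1,\gamma_1)\equiv(0,1,2)$ — relabel so that $\alpha_1\equiv 0$ — then $\{x_1^2,x_1^3\}$ and $\{x_1^1,x_1^3\}$ are mod-non-diagonal in $C_1$, so to avoid clashing paths both $\{x_2^2,x_2^3\}$ and $\{x_2^1,x_2^3\}$ are mod-diagonal in $C_2$; with $|C_2|\equiv 2$ this gives $\beta_2\equiv\gamma_2\equiv 1$, hence $\alpha_2\equiv 0$ — precisely the feature exploited in Case~A of Claim~\ref{ClEitherC1C2}. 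I would finish it the same way: using $\alpha_2\equiv 0$, pick $y_2=x_2^{1+}$, $z_2=x_2^{1++}$ interior to $\overrightarrow{C_2}(x_2^1,x_2^2)$, obtain from Claim~\ref{claim: rho} an auxiliary path $Q$ from $C_1$ to $\{y_2,z_2\}$ disjoint from at least two of $P^1,P^2,P^3$; one checks $|\overrightarrow{C_2}[v_Q,x_2^3]|\not\equiv 1$ whether $v_Q=y_2$ or $z_2$, so $v_Q$ is mod-non-diagonal with $x_2^3$, while by Lemma~\ref{LenDiagonal} the $C_1$-endpoint $u_Q$ of $Q$ is mod-non-diagonal with $x_1^2$ or with $x_1^3$ (the two $C_1$-arcs between $x_1^2$ and $x_1^3$ have lengths $\equiv 1$ and $\equiv 2$, both nonzero). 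A short check of which of $P^1,P^2,P^3$ meets $Q$ then exhibits a clashing pair with $Q$, the contradiction.

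For $(\alpha_1,\beta_1,\gamma_1)\equiv(0,0,0)$ I would copy Case~B of Claim~\ref{ClEitherC1C2}: take $y_1=x_1^{1+}$, $z_1=x_1^{1++}$ interior to $\overrightarrow{C_1}(x_1^1,x_1^2)$ and let $R$ be the resulting auxiliary path; its $C_1$-origin $u_R\in\{y_1,z_1\}$ satisfies $|\overrightarrow{C_1}[u_R,x_1^j]|\equiv 1$ or $2$ for every $j$, hence is mod-non-diagonal with all of $x_1^1,x_1^2,x_1^3$, and $R$ is disjoint from at least two of $P^1,P^2,P^3$. Replacing the remaining one of those paths by $R$ produces three pairwise disjoint paths from $C_1$ to $C_2$ whose origins cut $C_1$ into arcs of lengths $\equiv 0,1,2\bmod 3$; i.e.\ we are back in the $(0,1,2)$ configuration and finish as above.

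The step I expect to be the real work is the $(0,1,2)$ case (and its copy inside the $(0,0,0)$ reduction): once $\alpha_2\equiv 0$ has been forced, one still has to run the auxiliary-path argument to the end, and because in $C_2$ ``mod-diagonal'' now means arc-length $\equiv 1$ rather than $\equiv 0$, the bookkeeping — which endpoint of $Q$ is mod-diagonal with which $x_i^j$, and which of $P^1,P^2,P^3$ remains disjoint from $Q$ — is a little more delicate than in Claim~\ref{ClEitherC1C2}. Everything else is either an immediate residue count or a verbatim repetition of the earlier argument.
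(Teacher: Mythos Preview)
Your plan is correct and essentially identical to the paper's: reduce (via Claim~\ref{ClEitherC1C2}) to $|C_1|\equiv 0$, $|C_2|\equiv 2$, dispose of $(1,1,1)$ and $(2,2,2)$ by the $|C_2|\equiv 0$ contradiction, reduce $(0,0,0)$ to $(0,1,2)$ by the auxiliary path $R$, and in the $(0,1,2)$ case force $\beta_2\equiv\gamma_2\equiv 1$, $\alpha_2\equiv 0$ and run the auxiliary-path argument on $\overrightarrow{C_2}(x_2^1,x_2^2)$.

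One point in your ``short check'' does need more than you have written. Your dichotomy ``$u_Q$ is mod-non-diagonal with $x_1^2$ or with $x_1^3$'' together with ``$v_Q$ mod-non-diagonal with $x_2^3$'' only guarantees a clashing pair with $P^3$ in the first branch; in the second branch you would need $v_Q$ mod-non-diagonal with $x_2^2$, and that fails when $v_Q=z_2=x_2^{1++}$ (then $|\overrightarrow{C_2}[v_Q,x_2^2]|\equiv 1$). The paper closes this gap by observing that once $u_Q$ is mod-diagonal with $x_1^3$ (or $u_Q=x_1^3$) one has $|\overrightarrow{C_1}[u_Q,x_1^3]|\equiv 0$, hence $u_Q$ is mod-non-diagonal with \emph{both} $x_1^1$ and $x_1^2$ and in particular $u_Q\notin\{x_1^1,x_1^2\}$; since $|\overrightarrow{C_2}[x_2^1,v_Q]|+|\overrightarrow{C_2}[v_Q,x_2^2]|=\alpha_2\equiv 0$, at least one of $\{v_Q,x_2^1\}$, $\{v_Q,x_2^2\}$ is mod-non-diagonal in $C_2$, and the corresponding $P^1$ or $P^2$ clashes with $Q$. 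So $P^1$ is genuinely needed in one subcase --- your framing around $x_1^2,x_1^3$ alone does not reach it.
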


\begin{proof}
Assume w.l.o.g. that $|C_1|\equiv 0$ and $|C_2|\equiv 2$. 
If every two vertices in $\{x_2^1,x_2^2,x_2^3\}$ are mod-diagonal in $C_2$, 
then $\alpha_2\equiv\beta_2\equiv\gamma_2\equiv 1$ 
by Lemma \ref{LenDiagonal},
contradicting that $|C_2|\equiv 2$. So we conclude that there are two vertices in $\{x_2^1,x_2^2,x_2^3\}$ that are mod-non-diagonal in $C_2$. If $(\alpha_1,\beta_1,\gamma_1)\equiv(1,1,1)$ or $(2,2,2)$, then every two vertices in $\{x_1^1,x_1^2,x_1^3\}$ are mod-non-diagonal in $C_1$, and two paths of $P^1,P^2,P^3$ are clashing, a contradiction. So we conclude that $(\alpha_1,\beta_1,\gamma_1)\equiv(0,0,0)$ or $(0,1,2)$, up to symmetric.

\medskip\noindent\textbf{Case A.} $(\alpha_1,\beta_1,\gamma_1)\equiv(0,1,2)$.

In this case both $x_1^1,x_1^3$ and $x_1^2,x_1^3$ are mod-non-diagonal in $C_1$. 
This implies that both $x_2^1,x_2^3$ and $x_2^2,x_2^3$ are mod-diagonal in $C_2$. That is, $\beta_2\equiv\gamma_2\equiv 1$, which follows that $\alpha_2\equiv 0$.

Let $y_2=x_2^{1+}$ and $z_2=x_2^{1++}$. There exists a path $Q_1$ from $V(M)\backslash V(C_2)$ to $\{y_2,z_2\}$ with all internal vertices in $V(G)\backslash V(M)$. Let $v_Q\in\{y_2,z_2\}$ be the terminus of $Q_1$ and $u_1\in V(M)\backslash V(C_2)$ be the origin of $Q_1$. We define the path $Q$ as in Claim \ref{ClEitherC1C2}, and let $u_Q$ be the origin of $Q$. So $Q$ is a path from $C_1$ to $C_2$ and $Q$ is disjoint with $P^i$ unless $u_Q=x_1^i$, $i=1,2,3$.

Notice that $v_Q,x_2^3$ are mod-non-diagonal in $C_2$. If $u_Q,x_1^3$ are mod-non-diagonal in $C_1$, then $Q,P^3$ are two clashing paths, a contradiction. So we conclude that $u_Q,x_1^3$ are mod-diagonal in $C_1$ or $u_Q=x_1^3$. Specially, $u_Q\notin\{x_1^1,x_1^2\}$. Since $|\overrightarrow{C_1}[u_Q,x_1^3]|\equiv 0$, we see that $|\overrightarrow{C_1}[u_Q,x_1^1]|\not\equiv 0$ and $|\overrightarrow{C_1}[u_Q,x_1^2]|\not\equiv 0$.
Now both $u_Q,x_1^1$ and $u_Q,x_1^2$ are mod-non-diagonal in $C_1$. 

Notice that $|\overrightarrow{C_2}[x_2^1,v_Q]|+|\overrightarrow{C_2}[v_Q,x_2^2]|\equiv 0$. This implies that either $\{v_Q,x_2^1\}$ or $\{v_Q,x_2^2\}$ is a mod-non-diagonal pair in $C_2$. Thus either $Q,P^1$ or $Q,P^2$ are two clashing paths, a contradiction.

\medskip\noindent\textbf{Case B.} $(\alpha_1,\beta_1,\gamma_1)\equiv(0,0,0)$.

This case can be deduced through similar analysis to that in Claim \ref{ClEitherC1C2}. 
\end{proof}

By Claim \ref{ClBothC1C2}, $|C_1|\equiv|C_2|\equiv 2$. It follows that $(\alpha_1,\beta_1,\gamma_1)\equiv(0,1,1),(0,0,2)$ or $(1,2,2)$, up to symmetric.

\begin{claim}\label{ClQRequiv2}
If $Q,R$ are two disjoint paths from $C_1$ to $C_2$, such that either their origins are mod-non-diagonal in $C_1$, or their termini are mod-non-diagonal in $C_2$, then $|Q|+|R|\equiv 2$. 
\end{claim}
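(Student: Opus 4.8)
The plan is to run the same ``four cycles'' argument used in the proof of Lemma~\ref{LeC1C2P1P2} on the pair $Q,R$, but this time extracting a congruence for $|Q|+|R|$ rather than an outright contradiction. Write $q_1,r_1$ for the origins of $Q,R$ on $C_1$ and $q_2,r_2$ for their termini on $C_2$. Since $Q$ and $R$ are vertex-disjoint we have $q_1\neq r_1$ and $q_2\neq r_2$, so $C_1$ splits into two arcs joining $q_1$ to $r_1$, of lengths $a_1,a_1'$ with $a_1+a_1'=|C_1|\equiv 2$, and $C_2$ splits into two arcs joining $q_2$ to $r_2$, of lengths $a_2,a_2'$ with $a_2+a_2'=|C_2|\equiv 2$ (here I use Claim~\ref{ClBothC1C2}). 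For each of the four ways of choosing one arc of $C_1$ and one arc of $C_2$, the union of $Q$, that $C_1$-arc, $R$, and that $C_2$-arc is a cycle (the interiors of $Q$ and $R$ avoid $C_1\cup C_2$ and one another, and the two chosen arcs lie on disjoint cycles), of length $|Q|+|R|$ plus the two arc lengths; since $G$ has no $(1\bmod 3)$-cycle, none of the four is $\equiv 1\bmod 3$.

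Next I would reduce to a single case. If both $\{q_1,r_1\}$ were mod-non-diagonal in $C_1$ and $\{q_2,r_2\}$ were mod-non-diagonal in $C_2$, then $Q$ and $R$ would be clashing paths, contradicting that $G$ contains no clashing paths (Lemma~\ref{LeC1C2P1P2}). So at most one of the two hypotheses holds, and by the symmetry between $C_1$ and $C_2$ I may assume the origins $q_1,r_1$ are mod-non-diagonal in $C_1$ while the termini $q_2,r_2$ are mod-diagonal in $C_2$.

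Then the arithmetic is forced. From $a_2\equiv a_2'$ together with $a_2+a_2'\equiv 2$ we get $a_2\equiv a_2'\equiv 1$; from $a_1\not\equiv a_1'$ together with $a_1+a_1'\equiv 2$ we get $\{a_1,a_1'\}\equiv\{0,2\}\bmod 3$. Putting $s=|Q|+|R|$, among the four cycles constructed above two have lengths congruent to $s+0+1\equiv s+1$ and to $s+2+1\equiv s$. Neither may be $\equiv 1$, so $s\not\equiv 0$ and $s\not\equiv 1$, whence $s\equiv 2\bmod 3$, which is the claim.

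I do not anticipate a serious obstacle: the argument is short and parallels earlier lemmas. The two places requiring a little care are verifying that all four unions are genuinely cycles, and applying the no-clashing-paths property correctly so that the ``termini mod-non-diagonal'' half of the hypothesis is genuinely covered by symmetry rather than silently dropped.
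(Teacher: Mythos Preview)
Your proposal is correct and follows essentially the same route as the paper: use the no-clashing-paths property (Lemma~\ref{LeC1C2P1P2}) to force the mod-diagonal side to have both arcs $\equiv 1$, the mod-non-diagonal side to have arcs $\equiv\{0,2\}$, and then read off $|Q|+|R|\not\equiv 0,1$ from two of the cycles. The paper writes down only the two relevant cycles directly rather than all four, but since both $C_2$-arcs are $\equiv 1$ your four collapse to the same two constraints anyway.
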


\begin{proof}
Let $u_Q,u_R$ be the origins of $Q,R$ and $v_Q,v_R$ their termini. 
If $u_Q,u_R$ are mod-non-diagonal in $C_1$, then $v_Q,v_R$ are mod-diagonal in $C_2$; otherwise $Q,R$ are two clashing paths. That is, $|\overrightarrow{C_2}[v_Q,v_R]|\equiv|\overleftarrow{C_2}[v_Q,v_R]|\equiv 1$. We can assume w.l.o.g. that $|\overrightarrow{C_1}[u_Q,u_R]|\equiv 0$ and $|\overleftarrow{C_1}[u_Q,u_R]|\equiv 2$, see Figure \ref{FiTwoPathsQR}. Consider the two cycles 
\[
\overrightarrow{C_1}[u_Q,u_R]u_RRv_R\overrightarrow{C_2}[v_R,v_Q]v_QQu_Q,~
\overleftarrow{C_1}[u_Q,u_R]u_RRv_R\overrightarrow{C_2}[v_R,v_Q]v_QQu_Q. 
\]
They are not $(1\bmod 3)$-cycles, implying that $0+|Q|+1+|R|\not\equiv 1$ and $2+|Q|+1+|R|\not\equiv 1$. So $|Q|+|R|\equiv 2$, as desired. The second case can be proved similarly.
\end{proof}

\begin{figure}[ht]
\centering
\begin{tikzpicture}[scale=0.6]
\draw(0,0) circle (2); 
\draw(7,0){coordinate (o2)} circle (2);
\coordinate (x1) at (45:2); \node at (45:1.5) {$u_Q$};
\coordinate (x2) at (315:2); \node at (315:1.5) {$u_R$};
\foreach \x in {1,2} \draw[fill=black] (x\x) circle (0.1);
\node[left] at (180:2) {2}; \node[right] at (0:2) {0}; 
\path(o2)--+(135:2) coordinate (y1); \path(o2)--+(135:1.5){node{$v_Q$}};
\path(o2)--+(225:2) coordinate (y2); \path(o2)--+(225:1.5){node{$v_R$}};
\foreach \x in {1,2} \draw[fill=black] (y\x) circle (0.1);
\path(o2)--+(180:2){node[left]{1}}; \path(o2)--+(0:2){node[right]{1}};
\path(x1) edge [bend left=20] (y1); \path(x2) edge [bend right=20] (y2);
\node[above] at (3.5,1.8) {$Q$}; \node[above] at (3.5,-1.8) {$R$};
\end{tikzpicture}
\caption{Two paths $Q,R$ from $C_1$ to $C_2$.}
\label{FiTwoPathsQR}
\end{figure}
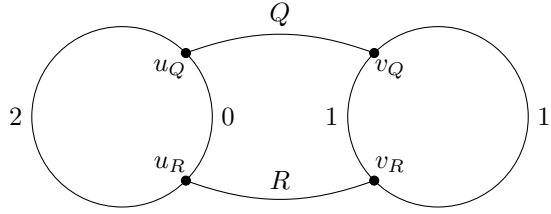

\begin{claim}\label{Cl011vs122}
Either $(\alpha_1,\beta_1,\gamma_1)\equiv(0,1,1)$, $(\alpha_2,\beta_2,\gamma_2)\equiv(1,2,2)$, or $(\alpha_1,\beta_1,\gamma_1)\equiv(1,2,2)$, $(\alpha_2,\beta_2,\gamma_2)\equiv(0,1,1)$, up to symmetric.
\end{claim}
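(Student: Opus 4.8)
The plan is to combine the no-clashing restriction (Lemma \ref{LeC1C2P1P2}, as already recorded before the claim) with Claim \ref{ClQRequiv2} to eliminate, one by one, all the symmetric possibilities for the pair of residue patterns of $(\alpha_1,\beta_1,\gamma_1)$ and $(\alpha_2,\beta_2,\gamma_2)$, leaving only the asserted alternative. First I would record the combinatorics: since $|C_i|\equiv 2$, each triple is, up to permutation, one of $(0,1,1),(0,0,2),(1,2,2)$, and by Lemma \ref{LenDiagonal} a pair among $\{x_i^1,x_i^2,x_i^3\}$ is mod-diagonal in $C_i$ precisely when the arc of $C_i$ between them (among the three arcs) has length $\equiv 1$; hence these three patterns have respectively $2$, $0$, $1$ mod-diagonal pairs, and in the $(0,1,1)$ case the two mod-diagonal pairs share a common vertex, which I will call the \emph{apex}. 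The no-clashing condition states that for every $j\ne k$ the pair $\{x_1^j,x_1^k\}$ is mod-diagonal in $C_1$ or $\{x_2^j,x_2^k\}$ is mod-diagonal in $C_2$; i.e. the mod-diagonal pair-sets of $C_1$ and $C_2$ together cover all three pairs. This kills at once the cases where some $C_i$ has pattern $(0,0,2)$ (then the other would need all three pairs mod-diagonal, forcing $|C_{3-i}|\equiv 0$) and the case where both have pattern $(1,2,2)$ (two $1$-element sets cannot cover three pairs). So either both cycles have pattern $(0,1,1)$ or exactly one has pattern $(0,1,1)$ and the other $(1,2,2)$, and it remains to rule out ``both $(0,1,1)$''.

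Suppose both are $(0,1,1)$, with apexes $x_1^a$ and $x_2^b$. The non-apex pair of $C_1$ is not mod-diagonal in $C_1$, hence is mod-diagonal in $C_2$, which forces $b\ne a$; symmetrically $a\ne b$. After a cyclic relabeling of the three paths applied simultaneously on both cycles, and possibly reversing both orientations, we may assume $a=1$, $b=2$, so $(\alpha_1,\beta_1,\gamma_1)\equiv(1,0,1)$ and $(\alpha_2,\beta_2,\gamma_2)\equiv(1,1,0)$. Applying Claim \ref{ClQRequiv2} to the path-pairs $(P^2,P^3)$ (mod-non-diagonal origins in $C_1$) and $(P^1,P^3)$ (mod-non-diagonal termini in $C_2$) gives $|P^2|+|P^3|\equiv|P^1|+|P^3|\equiv 2$, hence $|P^1|\equiv|P^2|$; and a cycle consisting of $P^1$, an arc of $C_1$ from $x_1^1$ to $x_1^2$, $P^2$, and an arc of $C_2$ from $x_2^2$ to $x_2^1$ (all such arcs have length $\equiv 1$) is not a $(1\bmod 3)$-cycle, so $|P^1|\equiv|P^2|\not\equiv 1$.

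Next I would build an auxiliary path. The $0$-arc of $C_1$ joining the non-apex vertices $x_1^2,x_1^3$ has length $\beta_1\equiv 0$, hence $\ge 3$, so it contains the two internal vertices $y_1=(x_1^2)^+$ and $z_1=(x_1^2)^{++}$; since $y_1z_1\in E(C_1)$, Claim \ref{claim: rho} gives $d_G(y_1)+d_G(z_1)\ge 5$, so one of them, say $u$, has a neighbour off $C_1$ (recall $C_1$ is induced by Claim \ref{claim: adjacent to two cycles}). Since $C_1$ has a unique bridge, containing all of $V(M)\setminus V(C_1)$, there is a path from $u$ to $V(M)\setminus V(C_1)$ internally disjoint from $M$; extending it along at most one $P^i$ yields a path $Q$ from $C_1$ to $C_2$ with origin $u$, terminus $v_Q\in V(C_2)$, meeting at most one of $P^1,P^2,P^3$, and if it meets $P^i$ then $v_Q=x_2^i$. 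The crucial observation, proved by forming the four cycles on $Q$, an arc of $C_1$, a $P^i$ disjoint from $Q$, and an arc of $C_2$ exactly in the style of the proof of Claim \ref{ClQRequiv2}, is: whenever $Q$ is disjoint from $P^i$ and $u$ is mod-non-diagonal with $x_1^i$ in $C_1$, one necessarily has both $|Q|+|P^i|\equiv 2$ and $v_Q$ mod-diagonal with $x_2^i$ in $C_2$. Since $u\in\{y_1,z_1\}$ is mod-non-diagonal with two of $x_1^1,x_1^2,x_1^3$, and since $\alpha_2\equiv\beta_2\equiv 1$ makes every vertex of $C_2$ mod-diagonal with exactly one of $x_2^1,x_2^2,x_2^3$, a short check over the (at most four) cases for which $P^i$ the path $Q$ meets always contradicts ``$|P^1|\equiv|P^2|\not\equiv 1$ and $|P^1|+|P^3|\equiv 2$'': in the cases where $v_Q$ is the apex's mod-diagonal neighbour one additionally uses the cycle through $Q$, that $P^i$, and an arc of $C_1$ between $x_1^i$ and $u$ to pin down $|Q|+|P^i|\bmod 3$. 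This eliminates ``both $(0,1,1)$'' and leaves exactly the alternative in the statement.

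The main obstacle is the final step: the auxiliary path $Q$ is produced cheaply but its terminus $v_Q$ is not under our control, so the argument must survive every landing position of $Q$ on $C_2$; for the two positions that are mod-diagonal neighbours of the ``good'' $x_2^i$ the clashing/Claim \ref{ClQRequiv2} inequality alone does not close, and one must add an extra cycle through $Q$, $P^i$ and a $C_1$-arc. Everything preceding this is routine bookkeeping with the three residue patterns $(0,1,1),(0,0,2),(1,2,2)$.
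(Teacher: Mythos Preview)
Your reduction to the case ``both patterns $(0,1,1)$'' is clean and correct, and the derivation $|P^1|\equiv|P^2|\not\equiv 1$, $|P^1|+|P^3|\equiv 2$ is fine. The gap is in the final step with a single auxiliary path $Q$. Take your labeling $(\alpha_1,\beta_1,\gamma_1)\equiv(1,0,1)$, $(\alpha_2,\beta_2,\gamma_2)\equiv(1,1,0)$ and suppose the vertex you extract from the $0$-arc is $u=z_1=(x_1^2)^{++}$, so $u$ is mod-diagonal with $x_1^3$ and mod-non-diagonal with $x_1^1,x_1^2$. If the path from $u$ first hits $P^2$ and is extended to $v_Q=x_2^2$ (the apex of $C_2$), then $Q$ is disjoint only from $P^1$ and $P^3$. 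From $P^1$ you get $|Q|+|P^1|\equiv 2$; from $P^3$ the origins and termini are \emph{both} mod-diagonal, so neither the clashing restriction nor Claim~\ref{ClQRequiv2} applies, and the only constraint is the ``extra cycle'' one, which gives $|Q|+|P^3|\not\equiv 2$. These are perfectly consistent with $\{|P^1|,|P^3|\}\equiv\{0,2\}$ and $|Q|\equiv|P^3|$, and I do not see any cycle in $C_1\cup C_2\cup P^1\cup P^2\cup P^3\cup Q$ of length $\equiv 1$ in this residue pattern. Your ``at most four cases'' also silently conflates $u=y_1$ with $u=z_1$; only for $u=y_1$ does $v_Q=x_2^2$ force $|P^1|\equiv|P^3|$ and close immediately.

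The paper avoids this by building \emph{two} auxiliary paths, one from the $0$-arc of $C_1$ and one from the $0$-arc of $C_2$, and first pinning down their endpoints (its Subclaims force the second vertex of each pair, eliminating the analogue of your bad case). It then argues that the two auxiliary paths either intersect, producing a single path from $\overrightarrow{C_1}(x_1^1,x_1^2)$ to $\overrightarrow{C_2}(x_2^2,x_2^3)$ that clashes with $P^2$, or are disjoint and clash with each other. Your single-path scheme cannot reproduce this: when $Q$ lands on the apex of $C_2$ via $P^2$, you lose access to $P^2$ and the remaining pair $P^1,P^3$ only yields constraints already known. To repair your argument you need a second auxiliary path from the $0$-arc of $C_2$ (or a genuinely new idea for the $u=z_1$, $v_Q=x_2^2$ subcase).
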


\begin{proof}
First suppose that $(\alpha_1,\beta_1,\gamma_1)\equiv(0,0,2)$. 
Then every two vertices in $\{x_1^1,x_1^2,x_1^3\}$ are mod-non-diagonal in $C_1$. Since $|C_2|\equiv 2$, 
there exist two vertices in $\{x_2^1,x_2^2,x_2^3\}$ which are mod-non-diagonal in $C_2$. It follows that two paths of $P^1,P^2,P^3$ are clashing, a contradiction. 

Second suppose that $(\alpha_1,\beta_1,\gamma_1)\equiv(1,2,2)$. 
In this case both $\{x_1^1,x_1^3\}$ and $\{x_1^2,x_1^3\}$ are mod-non-diagonal pairs in $C_1$. This implies that both $\{x_2^1,x_2^3\}$ and $\{x_2^2,x_2^3\}$ are mod-diagonal pairs in $C_2$. That is, $\beta_2\equiv\gamma_2\equiv 1$, which follows that $\alpha_2\equiv 0$, as desired.

Finally we suppose that $(\alpha_1,\beta_1,\gamma_1)\equiv(0,1,1)$. 
Since $x_1^1,x_1^2$ are mod-non-diagonal in $C_1$, 
we have that $x_2^1,x_2^2$ are mod-diagonal in $C_2$, i.e., $\alpha_2\equiv 1$. 
If $(\alpha_2,\beta_2,\gamma_2)\not\equiv(1,2,2)$, 
then we can assume w.l.o.g. that $\beta_2\equiv 0$ and $\gamma_2\equiv 1$. 

By $\beta_2\equiv 0$, 
there exist at least two vertices in $\overrightarrow{C_2}(x_2^2,x_2^3)$. 
We define a path $Q$ from $C_1$ to $C_2$ as in Claim \ref{ClEitherC1C2}, and let $u_Q\in V(C_1)$, $v_Q\in\{x_2^{2+},x_2^{2++}\}$ be the origin and terminus of $Q$. We notice that $Q$ and $P^i$ are disjoint unless $u_Q=x_1^i$ for $i=1,2,3$. By symmetric, we see that $\overleftarrow{C_1}(x_1^2,x_1^1)$ contains at least two vertices, and there exists a path $R$ from $C_1$ to $C_2$ with origin $u_R\in\{x_1^{2-},x_1^{2--}\}$ and terminus $v_R\in V(C_2)$ such that $R$ and $P^i$ are disjoint unless $v_R=x_2^i$ for $i=1,2,3$. 

\begin{subclaim}\label{ClvQv2uRu2}
    $v_Q=x_2^{2++}$, and $u_R=x_1^{2--}$.
\end{subclaim}

\begin{proof}
Suppose that $v_Q=x_2^{2+}$. 
Then both $\{v_Q,x_2^1\}$ and $\{v_Q,x_2^3\}$ are mod-non-diagonal pairs in $C_2$. If $u_Q\notin\{x_1^1,x_1^3\}$, 
then $Q$ is disjoint with $P^1$ and $P^3$. 
Since $|\overrightarrow{C_1}[x_1^1,x_1^3]|\equiv|\overrightarrow{C_1}[x_1^3,x_1^1]|\equiv 1$, 
by Lemma \ref{LenDiagonal}, 
every vertex in $V(C_1)\backslash\{x_1^1,x_1^3\}$ is mod-non-diagonal with either $x_1^1$ or $x_1^3$. 
It follows that either $Q,P^1$ or $Q,P^3$ are two clashing paths, a contradiction. 
Now we conclude that $u_Q=x_1^1$ or $x_1^3$, see Figure \ref{FivQx22+}. 
By Claim \ref{ClQRequiv2}, $|P^1|+|P^2|\equiv 2$ and $|P^2|+|P^3|\equiv 2$. 

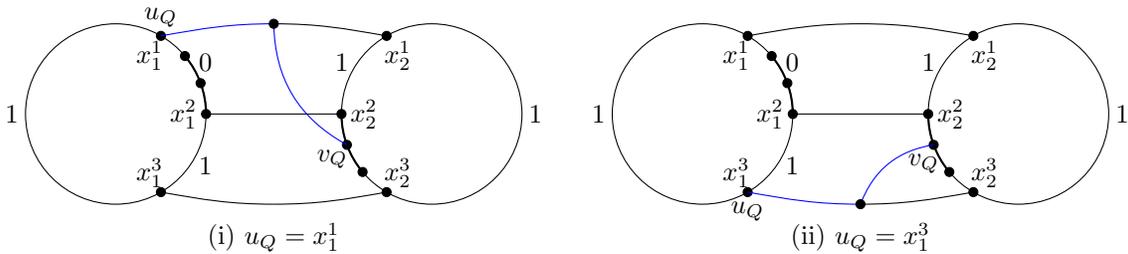
\begin{figure}[ht]
\centering
\begin{tikzpicture}[scale=0.6]

\begin{scope}
\draw(0,0) circle (2); 
\draw(7,0){coordinate (o2)} circle (2); 
\coordinate (x11) at (60:2); \node at (60:1.5) {$x_1^1$}; 
\coordinate (x12) at (0:2); \node at (0:1.5) {$x_1^2$}; 
\coordinate (x13) at (300:2); \node at (300:1.5) {$x_1^3$}; 
\foreach \x in {1,2,3} \draw[fill=black] (x1\x) circle (0.1);
\path(o2)--+(120:2) coordinate (x21); \path(o2)--+(120:1.5){node{$x_2^1$}};
\path(o2)--+(180:2) coordinate (x22); \path(o2)--+(180:1.5){node{$x_2^2$}};
\path(o2)--+(240:2) coordinate (x23); \path(o2)--+(240:1.5){node{$x_2^3$}};
\foreach \x in {1,2,3} \draw[fill=black] (x2\x) circle (0.1);
\coordinate (p1) at (3.5,2); \coordinate (p2) at (3.5,0); \coordinate (p3) at (3.5,-2);
\draw[blue] (x11) to [out=10, in=180] (p1); \draw(p1) to [out=0, in=170] (x21);
\draw(x12)--(x22);
\draw(x13) to [out=-10, in=180] (p3); \draw(p3) to [out=0, in=-170] (x23); 
\node at (30:2.3) {0}; \node at (-30:2.3) {1}; \node at (180:2.3) {1};
\path(o2)--+(150:2.3){node{1}}; \path(o2)--+(0:2.3){node{1}}; 
\path(o2)--+(200:2) {coordinate (y2)}; 
\draw[fill=black] (y2) circle (0.1);
\path(o2)--+(220:2) coordinate (z2); 
\draw[fill=black] (z2) circle (0.1);
\draw[thick] (x22) arc (180:220:2); 
\draw[fill=black] (20:2) {coordinate (y1)} circle (0.1); 
\draw[fill=black] (40:2) {coordinate (z1)} circle (0.1); 
\draw[thick] (x12) arc (0:40:2); 
\draw[blue] (y2) to [bend left=30] (p1);
\draw[fill=black] (p1) circle (0.1);
\node[above] at (x11) {$u_Q$}; \node at (4.8,-1) {$v_Q$};
\node[below] at (3.5,-2.2) {(i) $u_Q=x_1^1$};
\end{scope}

\begin{scope}[xshift=13cm]
\draw(0,0) circle (2); 
\draw(7,0){coordinate (o2)} circle (2); 
\coordinate (x11) at (60:2); \node at (60:1.5) {$x_1^1$}; 
\coordinate (x12) at (0:2); \node at (0:1.5) {$x_1^2$}; 
\coordinate (x13) at (300:2); \node at (300:1.5) {$x_1^3$}; 
\foreach \x in {1,2,3} \draw[fill=black] (x1\x) circle (0.1);
\path(o2)--+(120:2) coordinate (x21); \path(o2)--+(120:1.5){node{$x_2^1$}};
\path(o2)--+(180:2) coordinate (x22); \path(o2)--+(180:1.5){node{$x_2^2$}};
\path(o2)--+(240:2) coordinate (x23); \path(o2)--+(240:1.5){node{$x_2^3$}};
\foreach \x in {1,2,3} \draw[fill=black] (x2\x) circle (0.1);
\coordinate (p1) at (3.5,2); \coordinate (p2) at (3.5,0); \coordinate (p3) at (3.5,-2);
\draw(x11) to [out=10, in=180] (p1); \draw(p1) to [out=0, in=170] (x21);
\draw(x12)--(x22);
\draw[blue] (x13) to [out=-10, in=180] (p3); \draw(p3) to [out=0, in=-170] (x23); 
\node at (30:2.3) {0}; \node at (-30:2.3) {1}; \node at (180:2.3) {1};
\path(o2)--+(150:2.3){node{1}}; \path(o2)--+(0:2.3){node{1}}; 
\path(o2)--+(200:2) {coordinate (y2)}; 
\draw[fill=black] (y2) circle (0.1);
\path(o2)--+(220:2) coordinate (z2); 
\draw[fill=black] (z2) circle (0.1);
\draw[thick] (x22) arc (180:220:2); 
\draw[fill=black] (20:2) {coordinate (y1)} circle (0.1); 
\draw[fill=black] (40:2) {coordinate (z1)} circle (0.1); 
\draw[thick] (x12) arc (0:40:2); 
\draw[blue] (y2) to [bend right=30] (p3);
\draw[fill=black] (p3) circle (0.1);
\node[below] at (x13) {$u_Q$}; \node at (4.9,-1.1) {$v_Q$};
\node[below] at (3.5,-2.2) {(ii) $u_Q=x_1^3$};
\end{scope}

\end{tikzpicture}
\caption{The path $Q$ in Claim \ref{ClvQv2uRu2}.}\label{FivQx22+}
\end{figure}

If $u_Q=x_1^1$, then $Q$ is disjoint with $P^2,P^3$, see Figure \ref{FivQx22+} (i). By Claim \ref{ClQRequiv2}, $|Q|+|P^2|\equiv|Q|+|P^3|\equiv 2$. It follows that $|P^2|\equiv|P^3|$ and $|P^1|+|P^3|\equiv 2$. Now $\overrightarrow{C_1}[x_1^1,x_1^3]x_1^3P^3x_2^3\overrightarrow{C_2}[x_2^3,x_2^1]x_2^1P^1x_1^1$ is a $(1\bmod 3)$-cycle, a contradiction.

Now assume that $u_Q=x_1^3$, see Figure \ref{FivQx22+} (ii). Then $Q\cup P^3$ contains a path $S$ from $v_Q$ to $x_2^3$ with all internal vertices in $(V(G)\backslash V(M))\cup V(P^3)$. 
Now there exist three $(v_Q,x_2^3)$-paths 
\[
\overrightarrow{C_2}[v_Q,x_2^3],~
\overleftarrow{C_2}[v_Q,x_2^3],~
v_Qx_2^2P^2x_1^2\overleftarrow{C_1}[x_1^2,x_1^1]x_1^1P^1x_2^1\overleftarrow{C_1}[x_2^1,x_2^3],
\]
which are internally-disjoint with $S$, and of lengths $2,\ 0,\ 1\bmod 3$, respectively, a contradiction. 

The second assertion can be proved similarly. 
\end{proof}

\begin{subclaim}\label{CluQx1diagonal}
$u_Q\in V(\overrightarrow{C_1}(x_1^2,x_1^1))$ 
and $\{u_Q, x_1^1\}$ is a mod-diagonal pair in $C_1$; 
$v_R\in V(\overrightarrow{C_2}(x_2^3,x_2^2))$ 
and $\{v_R, x_2^3\}$ is a mod-diagonal pair in $C_2$.
\end{subclaim}

\begin{proof}
By Claim \ref{ClvQv2uRu2}, $v_Q=x_2^{2++}$, 
implying that both $\{v_Q,x_2^1\}$ and $\{v_Q,x_2^2\}$ are mod-non-diagonal pairs in $C_2$. It follows that neither $\{u_Q,x_1^1\}$ nor $\{u_Q,x_1^2\}$ is a mod-non-diagonal pair in $C_1$. Notice that $x_1^1,x_1^2$ are mod-non-diagonal, and every vertex in $\overrightarrow{C_1}(x_1^1,x_1^2)$ is mod-non-diagonal with either $x_1^1$ or $x_1^2$ in $C_1$. We conclude that $u_Q\in V(\overrightarrow{C_1}(x_1^2,x_1^1))$ and $\{u_Q, x_1^1\}$ is a mod-diagonal pair in $C_1$. 
The second assertion can be proved similarly.
\end{proof}

We remark that possibly $V(Q)\cap V(R)\neq\emptyset$. If this happens, then $Q\cup R$ contains a path $S$ from $u_R$ to $v_Q$, see Figure \ref{FiQRS} (i). Moreover, $S$ is disjoint with $P^2$ since both $Q$ and $R$ are disjoint with $P^2$. Notice that $u_R,x_1^2$ are mod-non-diagonal in $C_1$ and $v_Q,x_2^2$ are mod-non-diagonal in $C_2$. It follows that $S,P^2$ are two clashing paths, a contradiction.

\begin{figure}[ht]
\centering
\begin{tikzpicture}[scale=0.6]

\begin{scope}
\draw(0,0) circle (2); 
\draw(7,0){coordinate (o2)} circle (2); 
\coordinate (x11) at (60:2); \node at (60:1.5) {$x_1^1$}; 
\coordinate (x12) at (0:2); \node at (0:1.5) {$x_1^2$}; 
\coordinate (x13) at (300:2); \node at (300:1.5) {$x_1^3$}; 
\foreach \x in {1,2,3} \draw[fill=black] (x1\x) circle (0.1);
\path(o2)--+(120:2) coordinate (x21); \path(o2)--+(120:1.5){node{$x_2^1$}};
\path(o2)--+(180:2) coordinate (x22); \path(o2)--+(180:1.5){node{$x_2^2$}};
\path(o2)--+(240:2) coordinate (x23); \path(o2)--+(240:1.5){node{$x_2^3$}};
\foreach \x in {1,2,3} \draw[fill=black] (x2\x) circle (0.1);
\coordinate (p1) at (3.5,2); \coordinate (p2) at (3.5,0); \coordinate (p3) at (3.5,-2);
\draw(x11) to [out=10, in=180] (p1); \draw(p1) to [out=0, in=170] (x21);
\draw(x12)--(x22);
\draw(x13) to [out=-10, in=180] (p3); \draw(p3) to [out=0, in=-170] (x23); 
\node at (-30:2.3) {1}; \node at (180:2.3) {1};
\path(o2)--+(150:2.3){node{1}}; \path(o2)--+(0:2.3){node{1}}; 
\path(o2)--+(200:2) {coordinate (y2)}; 
\draw[fill=black] (y2) circle (0.1);
\path(o2)--+(220:2) coordinate (z2); 
\draw[fill=black] (z2) circle (0.1);
\draw[thick] (x22) arc (180:220:2); 
\draw[fill=black] (20:2) {coordinate (y1)} circle (0.1); 
\draw[fill=black] (40:2) {coordinate (z1)} circle (0.1); 
\draw[thick] (x12) arc (0:40:2); 
\draw[blue] (z1)..controls (3.5,1.5) and (3.5,-1.5)..(z2);
\node at (1.8,1.6) {$u_R$}; \node at (5.2,-1.6) {$v_Q$};
\node[below] at (3.5,-2.7) {(i) The path $S$ ($V(Q)\cap V(R)\neq\emptyset$)};
\end{scope}

\begin{scope}[xshift=13cm]
\draw(0,0) circle (2); 
\draw(7,0){coordinate (o2)} circle (2); 
\coordinate (x11) at (60:2); \node at (60:1.5) {$x_1^1$}; 
\coordinate (x12) at (0:2); \node at (0:1.5) {$x_1^2$}; 
\coordinate (x13) at (300:2); \node at (300:1.5) {$x_1^3$}; 
\foreach \x in {1,2,3} \draw[fill=black] (x1\x) circle (0.1);
\path(o2)--+(120:2) coordinate (x21); \path(o2)--+(120:1.5){node{$x_2^1$}};
\path(o2)--+(180:2) coordinate (x22); \path(o2)--+(180:1.5){node{$x_2^2$}};
\path(o2)--+(240:2) coordinate (x23); \path(o2)--+(240:1.5){node{$x_2^3$}};
\foreach \x in {1,2,3} \draw[fill=black] (x2\x) circle (0.1);
\coordinate (p1) at (3.5,2); \coordinate (p2) at (3.5,0); \coordinate (p3) at (3.5,-2);
\draw(x11) to [out=10, in=180] (p1); \draw(p1) to [out=0, in=170] (x21);
\draw(x12)--(x22);
\draw(x13) to [out=-10, in=180] (p3); \draw(p3) to [out=0, in=-170] (x23); 
\node at (-30:2.3) {1}; \node at (180:2.3) {1};
\path(o2)--+(150:2.3){node{1}}; \path(o2)--+(0:2.3){node{1}}; 
\path(o2)--+(200:2) {coordinate (y2)}; 
\draw[fill=black] (y2) circle (0.1);
\path(o2)--+(220:2) coordinate (z2); 
\draw[fill=black] (z2) circle (0.1);
\draw[thick] (x22) arc (180:220:2); 
\draw[fill=black] (20:2) {coordinate (y1)} circle (0.1); 
\draw[fill=black] (40:2) {coordinate (z1)} circle (0.1); 
\draw[thick] (x12) arc (0:40:2); 
\path(o2)--+(80:2) {coordinate (vR)}; \draw[fill=black] (vR) circle (0.1);
\draw[fill=black] (260:2) {coordinate (uQ)} circle (0.1);
\draw[blue] (z1) to [bend left=30] (vR);
\draw[blue] (z2) to [bend left=30] (uQ);
\node at (30:2.4) {$u_R$}; \node at (260:2.4) {$u_Q$}; 
\path(o2)--+(210:2.4) {node {$v_Q$}}; \path(o2)--+(80:2.4) {node {$v_R$}};
\node[below] at (3.5,-2.7) {(ii) The paths $Q$ and $R$ ($V(Q)\cap V(R)=\emptyset$)};
\end{scope}

\end{tikzpicture}
\caption{The paths $Q$, $R$ and $S$ in Claim \ref{Cl011vs122}.}\label{FiQRS}
\end{figure}
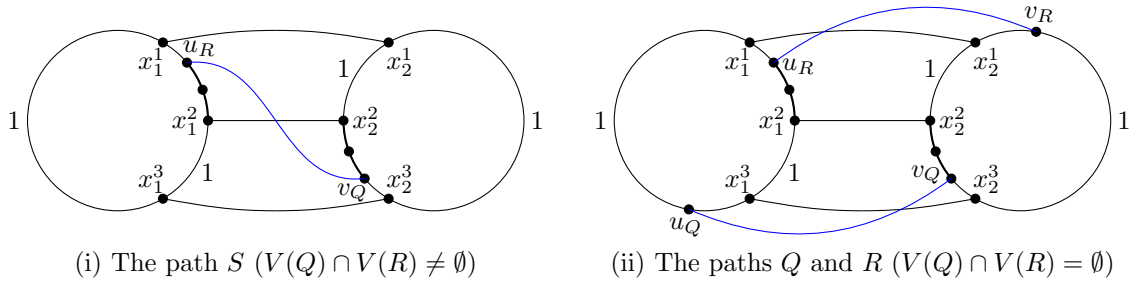

Now suppose that $Q,R$ are disjoint, see Figure \ref{FiQRS} (ii). By Claim \ref{CluQx1diagonal}, $u_Q\in V(\overrightarrow{C_1}(x_1^2,x_1^1))$ and $u_Q, x_1^1$ are mod-diagonal in $C_1$. Since $|\overrightarrow{C_1}[x_1^1,u_R]|\equiv 1$, we see that $u_Q,u_R$ are mod-non-diagonal in $C_1$. Similarly, $v_Q,v_R$ are mod-non-diagonal in $C_2$, implies that $Q,R$ are two clashing paths, a contradiction.
\end{proof}

\begin{claim}\label{ClQRExactlyOne}
Suppose that $Q,R$ are two disjoint paths from $C_1$ to $C_2$. 
If there exists a third path from $C_1$ to $C_2$ that is disjoint with $Q,R$, 
then $|Q|\equiv|R|\equiv 1$ and exactly one of the following holds: \\ 
\indent $(1)$ the origins of $Q,R$ are mod-diagonal in $C_1$, or \\
\indent $(2)$ the termini of $Q,R$ are mod-diagonal in $C_2$. 
\end{claim}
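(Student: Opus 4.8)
The plan is to handle the three pairwise disjoint paths $Q,R,S$ from $C_1$ to $C_2$ together, combining the absence of clashing paths (Lemma~\ref{LeC1C2P1P2}) with the length congruences of Claim~\ref{ClQRequiv2}. Recall $|C_1|\equiv|C_2|\equiv2\pmod3$ (Claim~\ref{ClBothC1C2}). Fix orientations of $C_1,C_2$; the origins of $Q,R,S$ cut $C_1$ into three arcs, the termini cut $C_2$ into three arcs, and each triple of arc-lengths sums to $\equiv2\pmod3$. By Lemma~\ref{LenDiagonal}, two of the paths have mod-diagonal origins in $C_1$ precisely when the arc of $C_1$ lying directly between those two origins has length $\equiv1\pmod3$; write $d_1$ (resp.\ $d_2$) for the number of the three pairs with mod-diagonal origins (resp.\ termini). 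Since a triple of arc-lengths summing to $\equiv2$ cannot be $\equiv(1,1,1)$, we get $d_i\le2$; since a mod-non-diagonal pair on one cycle must be mod-diagonal on the other (else the two paths clash), we get $3-d_1\le d_2$ and $3-d_2\le d_1$, and in particular $d_i\ge1$ (if $d_i=0$ then all three pairs are mod-non-diagonal on $C_i$ and hence all mod-diagonal on $C_{3-i}$, impossible as $d_{3-i}\le2$). Thus $d_1,d_2\in\{1,2\}$ and $d_1+d_2\in\{3,4\}$.

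The crucial step is to exclude $d_1+d_2=4$, i.e.\ $d_1=d_2=2$. In this case each of $C_1,C_2$ has exactly one mod-non-diagonal pair; those two pairs are distinct (a pair mod-non-diagonal on both cycles would clash), so the remaining pair is mod-diagonal on \emph{both}, and the three arc-lengths on $C_1$, as well as those on $C_2$, are $\equiv(0,1,1)\pmod3$ up to cyclic rotation -- exactly the configuration ruled out for $P^1,P^2,P^3$ in Claim~\ref{Cl011vs122}. The plan is to reproduce that argument with $Q,R,S$ playing the role of $P^1,P^2,P^3$: the arc of $C_2$ of length $\equiv0$ between the two relevant termini has length $\ge1$, hence $\ge3$, and so contains two consecutive internal vertices $w_1,w_2$; Claim~\ref{claim: rho} gives $\max\{d_G(w_1),d_G(w_2)\}\ge3$; and since $C_2$ is induced with exactly one bridge (Claim~\ref{claim: adjacent to two cycles}), there is a path from $\{w_1,w_2\}$ to $V(C_1\cup C_2\cup Q\cup R\cup S)\setminus V(C_2)$ with all internal vertices outside $C_1\cup C_2\cup Q\cup R\cup S$, which -- concatenated with a suitable sub-arc of $C_1$ and with two of $Q,R,S$, exactly as in Claims~\ref{ClEitherC1C2}--\ref{Cl011vs122} -- produces two clashing paths from $C_1$ to $C_2$, a contradiction. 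I expect this to be the main obstacle, since it repeats the paper's longest and most delicate case analysis; the one new point is to check that the structural facts about the fixed pair $C_1,C_2$ supplied by Claim~\ref{claim: adjacent to two cycles} (induced, single bridge, every component of $G-C_1\cup C_2$ adjacent to both cycles) remain available when the scaffolding is $Q\cup R\cup S$ instead of $P^1\cup P^2\cup P^3$.

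Once $d_1+d_2=4$ is excluded we have $d_1+d_2=3$; assume $(d_1,d_2)=(2,1)$, the case $(1,2)$ being symmetric. Then $C_1$ has a unique mod-non-diagonal pair and $C_2$ a unique mod-diagonal pair; by the no-clashing property these two pairs coincide, and a direct check of the three pairs shows that none of them is mod-diagonal on both $C_1$ and $C_2$ -- equivalently, every pair has a mod-non-diagonal coordinate, so Claim~\ref{ClQRequiv2} applies to each of the three pairs and gives $|Q|+|R|\equiv|Q|+|S|\equiv|R|+|S|\equiv2\pmod3$. Adding these three congruences yields $2(|Q|+|R|+|S|)\equiv0$, hence $|Q|+|R|+|S|\equiv0$, and therefore $|Q|\equiv|R|\equiv|S|\equiv1\pmod3$; in particular $|Q|\equiv|R|\equiv1$. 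Finally, for the designated pair $\{Q,R\}$: it is not clashing (Lemma~\ref{LeC1C2P1P2}) and, as noted, not mod-diagonal on both $C_1$ and $C_2$, so exactly one of (1) and (2) holds. This completes the proof.
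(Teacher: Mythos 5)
Your proof is correct and takes essentially the same approach as the paper: the $d_1,d_2$ bookkeeping is a restatement of the $(0,1,1)$/$(1,2,2)$ arc classification from Claim~\ref{Cl011vs122}, the exclusion of $d_1=d_2=2$ is precisely the case the paper defers to ``a similar progress as in Claim~\ref{Cl011vs122}'', and the derivation of $|Q|\equiv|R|\equiv1$ from Claim~\ref{ClQRequiv2} is the same. You also correctly flag the one subtle point, namely that the structural facts used in the path-building step (from Claims~\ref{claim: rho} and~\ref{claim: adjacent to two cycles}) concern the fixed pair $C_1,C_2$ and therefore carry over when $P^1,P^2,P^3$ are replaced by the arbitrary disjoint paths $Q,R,S$.
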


\begin{proof}
Let $S$ be a third path from $C_1$ to $C_2$. Let $S^1,S^2,S^3$ be a rearrangement of $Q,R,S$, with $V(C_i)\cap V(S^j)=\{y_i^j\}$, such that $y_1^1,y_1^2,y_1^3$ appear in this order along $C_1$ and $y_2^1,y_2^2,y_2^3$ appear in this order along $C_2$ (with proper orientations of $C_1,C_2$). Set $\alpha'_i=\overrightarrow{C_i}[y_i^1,y_i^2]$, $\beta'_i=\overrightarrow{C_i}[y_i^2,y_i^3]$, $\gamma'_i=\overrightarrow{C_i}[y_i^3,y_i^1]$, $i=1,2$. By a similar progress as in Claim \ref{Cl011vs122}, we conclude that, up to symmetric, either $(\alpha'_1,\beta'_1,\gamma'_1)\equiv(0,1,1)$, $(\alpha'_2,\beta'_2,\gamma'_2)\equiv(1,2,2)$, or $(\alpha'_1,\beta'_1,\gamma'_1)\equiv(1,2,2)$, $(\alpha'_2,\beta'_2,\gamma'_2)\equiv(0,1,1)$. It follows that for each two paths $S^i,S^j$, either $y_1^i,y_1^j$ are mod-diagonal in $C_1$ or $y_2^i,y_2^j$ are mod-diagonal in $C_2$, but not both. We conclude that exactly one of the statements (1) and (2) holds. 

By Claim \ref{ClQRequiv2}, we have that $|S^1|+|S^2|\equiv|S^1|+|S^3|\equiv|S^2|+|S^3|\equiv 2$. It follows that $|S^1|\equiv|S^2|\equiv|S^3|\equiv 1$. That is, $|Q|\equiv|R|\equiv 1$.
\end{proof}

If $Q,R$ are two disjoint paths from $C_1$ to $C_2$, both disjoint with a third path from $C_1$ to $C_2$, such that their origins are mod-diagonal in $C_1$ and their termini are mod-diagonal in $C_2$, then we call $Q,R$ two \textit{parallel} paths. So Claim \ref{ClQRExactlyOne} says that there are no two parallel paths from $C_1$ to $C_2$.

By Claim \ref{Cl011vs122} and by symmetric, we can assume that $(\alpha_1,\beta_1,\gamma_1)\equiv(0,1,1)$ and $(\alpha_2,\beta_2,\gamma_2)\equiv(1,2,2)$. By Claim \ref{ClQRExactlyOne}, we see that $|P^1|\equiv|P^2|\equiv|P^3|\equiv 1$.

\begin{claim}\label{ClP4}
There exists a path $R$ from $\overrightarrow{C_1}(x_1^1,x_1^2)$ to $\overrightarrow{C_2}(x_2^2,x_2^1)-x_2^3$ with all internal vertices in $V(G)\backslash V(M)$. 
Moreover, assuming that $u_R,v_R$ are the origin and terminus of $R$, 
if $v_R\in V(\overrightarrow{C_2}(x_2^2,x_2^3))$, 
then $|\overrightarrow{C_1}[x_1^1,u_R]|\equiv|\overrightarrow{C_2}[x_2^2,v_R]|\equiv 1$; 
and if $v_R\in V(\overrightarrow{C_2}(x_2^3,x_2^1))$, 
then $|\overrightarrow{C_1}[x_1^1,u_R]|\equiv 2$ and $|\overrightarrow{C_2}[x_2^3,v_R]|\equiv 1$.
\end{claim}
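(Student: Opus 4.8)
The plan is to produce a single auxiliary path $R$ from $C_1$ to $C_2$, disjoint from $P^1,P^2,P^3$, whose origin lies in the arc $\overrightarrow{C_1}(x_1^1,x_1^2)$, and then to locate its terminus and read off the claimed congruences by pairing $R$ with each $P^i$ and invoking that $G$ has no two clashing paths from $C_1$ to $C_2$.

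\emph{Producing $R$.} Since $\alpha_1\equiv 0$ and $\alpha_1\geqslant 1$, in fact $\alpha_1\geqslant 3$, so $\overrightarrow{C_1}(x_1^1,x_1^2)$ contains the two vertices $y_1=x_1^{1+}$ and $z_1=x_1^{1++}$, with $|\overrightarrow{C_1}[x_1^1,y_1]|=1$ and $|\overrightarrow{C_1}[x_1^1,z_1]|=2$. By Claim \ref{claim: rho}, $\rho(\{y_1,z_1\})>3$; since $C_1$ is induced, the only edges of $C_1$ incident with $\{y_1,z_1\}$ are $x_1^1y_1$, $y_1z_1$ and $z_1z_1^{+}$, whence $N_G(\{y_1,z_1\})\setminus V(C_1)\neq\emptyset$. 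By the choice of $C_1,C_2$ in Claim \ref{claim: adjacent to two cycles}, the cycle $C_1$ has a unique bridge, so $G-C_1$ is connected and there is a path from $\{y_1,z_1\}$ to $V(M)\setminus V(C_1)$ all of whose internal vertices lie off $M$. If its terminus fell on some $P^j\setminus\{x_1^j,x_2^j\}$, I would extend it along $P^j$ to $x_2^j$; so it remains to rule out a terminus at $x_2^1$, $x_2^2$ or $x_2^3$. In each such case one forms two disjoint auxiliary cycles, one absorbing the new path together with a short $C_1$-arc and the other built from the remaining two of $P^1,P^2,P^3$ and arcs of $C_1$ and $C_2$, and exhibits two disjoint paths between these cycles that are clashing, so that Lemma \ref{LeC1C2P1P2} produces a $(1\bmod 3)$-cycle, a contradiction; this step uses $(\alpha_2,\beta_2,\gamma_2)\equiv(1,2,2)$, $|P^1|\equiv|P^2|\equiv|P^3|\equiv 1$, and that the new path's origin (which is $y_1$ or $z_1$) is at distance $1$ or $2$ from $x_1^1$ along $C_1$. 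Hence the terminus $v_R$ of the resulting path $R$ lies on $V(C_2)\setminus\{x_2^1,x_2^2,x_2^3\}$; with origin $u_R\in\{y_1,z_1\}$ and internal vertices off $M$, $R$ is a path from $C_1$ to $C_2$ disjoint from $P^1,P^2,P^3$.

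\emph{Locating $v_R$ and the congruences.} Recall $(\alpha_1,\beta_1,\gamma_1)\equiv(0,1,1)$, $(\alpha_2,\beta_2,\gamma_2)\equiv(1,2,2)$ and $|P^1|\equiv|P^2|\equiv|P^3|\equiv 1$, and set $a=|\overrightarrow{C_1}[x_1^1,u_R]|$. By Lemma \ref{LenDiagonal}, $u_R$ is mod-diagonal with $x_1^1$, $x_1^2$, $x_1^3$ in $C_1$ exactly when $a\equiv 1$, $a\equiv 2$, $a\equiv 0$, respectively. Since $R$ is disjoint from each $P^i$, the pair $R,P^i$ is not clashing, for $i=1,2,3$. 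If $v_R\in V(\overrightarrow{C_2}(x_2^1,x_2^2))$, put $c=|\overrightarrow{C_2}[x_2^1,v_R]|$; then $v_R$ is mod-diagonal with $x_2^1$, $x_2^2$, $x_2^3$ exactly when $c\equiv 1$, $c\equiv 0$, $c\equiv 2$, so the three non-clashing conditions read: $a\equiv 1$ or $c\equiv 1$; $a\equiv 2$ or $c\equiv 0$; and $a\equiv 0$ or $c\equiv 2$. These cannot all hold (inspect $c\equiv 0,1,2$), a contradiction; hence $v_R\in\overrightarrow{C_2}(x_2^2,x_2^3)\cup\overrightarrow{C_2}(x_2^3,x_2^1)=\overrightarrow{C_2}(x_2^2,x_2^1)-x_2^3$, which is the first assertion. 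For the ``moreover'' part, if $v_R\in V(\overrightarrow{C_2}(x_2^2,x_2^3))$ with $b=|\overrightarrow{C_2}[x_2^2,v_R]|$, then $v_R$ is mod-diagonal with $x_2^1$, $x_2^2$, $x_2^3$ exactly when $b\equiv 0$, $b\equiv 1$, $b\equiv 1$, and the non-clashing conditions $a\equiv 1$ or $b\equiv 0$; $a\equiv 2$ or $b\equiv 1$; $a\equiv 0$ or $b\equiv 1$ force $a\equiv b\equiv 1$. If instead $v_R\in V(\overrightarrow{C_2}(x_2^3,x_2^1))$ with $b'=|\overrightarrow{C_2}[x_2^3,v_R]|$, then $v_R$ is mod-diagonal with $x_2^1$, $x_2^2$, $x_2^3$ exactly when $b'\equiv 1$, $b'\equiv 2$, $b'\equiv 1$, and the conditions $a\equiv 1$ or $b'\equiv 1$; $a\equiv 2$ or $b'\equiv 2$; $a\equiv 0$ or $b'\equiv 1$ force $a\equiv 2$ and $b'\equiv 1$.

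The hard part will be the existence step: showing that the path built from $\{y_1,z_1\}$ can be taken to reach $V(C_2)\setminus\{x_2^1,x_2^2,x_2^3\}$ rather than being absorbed into some $P^j$ or ending at an $x_2^j$. This is the only place where genuinely new cycles must be introduced, and the delicate point is to choose the arc of $C_1$ used to close the auxiliary cycle so that two of the three natural connecting paths become mod-non-diagonal on both sides; once $R$ is available, the remainder is the modular bookkeeping above, needing only Lemma \ref{LenDiagonal} and Lemma \ref{LeC1C2P1P2}.
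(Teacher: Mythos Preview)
Your second part (``Locating $v_R$ and the congruences'') is correct and in fact more systematic than the paper's version: you parametrise by $a$ and $b,b',c$ and read off the constraints from the three non-clashing conditions. This is exactly the paper's strategy.

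The gap is in your first part, and it is self-inflicted. To rule out $v_R\in\{x_2^1,x_2^2,x_2^3\}$ you propose to ``form two disjoint auxiliary cycles, one absorbing the new path together with a short $C_1$-arc and the other built from the remaining two of $P^1,P^2,P^3$'', and then exhibit clashing paths between those. As written this does not close up: $R$ runs from $C_1$ to $C_2$, so $R$ together with a $C_1$-arc alone is not a cycle, and any natural way to complete it (via $P^j$ or a $C_2$-arc) forces shared vertices with the other proposed cycle. You then flag this as ``the hard part'' and ``the only place where genuinely new cycles must be introduced''.

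In fact no new cycles are needed; the same clashing argument you already use in your second part disposes of these three cases directly. By construction $R$ is disjoint from $P^i$ unless $v_R=x_2^i$. If $v_R=x_2^3$, then $R$ is disjoint from both $P^1$ and $P^2$; since $x_2^3$ is mod-non-diagonal to both $x_2^1$ and $x_2^2$ (because $\beta_2\equiv\gamma_2\equiv 2$) and $u_R\in\{x_1^{1+},x_1^{1++}\}$ is mod-non-diagonal to one of $x_1^1,x_1^2$, one of the pairs $R,P^1$ or $R,P^2$ is clashing. Once $v_R\neq x_2^3$ is known, $R$ is disjoint from $P^3$; since $u_R$ is always mod-non-diagonal to $x_1^3$ (your own computation gives mod-diagonal iff $a\equiv 0$, impossible for $a\in\{1,2\}$) and each of $x_2^1,x_2^2$ is mod-non-diagonal to $x_2^3$, the cases $v_R=x_2^1$ and $v_R=x_2^2$ make $R,P^3$ clashing. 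This is precisely what the paper does, in two lines.
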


\begin{proof}
By $\alpha_1\equiv 0$, 
there exist at least two vertices in $\overrightarrow{C_1}(x_1^1,x_1^2)$. 
Similarly to Claim \ref{ClEitherC1C2}, 
there exists a path $R$ from $C_1$ to $C_2$ 
with origin $u_R\in\{x_1^{1+},x_1^{1++}\}$ and terminus $v_R\in V(C_2)$ 
such that $R$ is disjoint with $P^i$ unless $v_R=x_2^i$, $i=1,2,3$. 

If $v_R=x_2^3$, 
then both $\{v_R,x_2^1\}$ and $\{v_R,x_2^2\}$ are mod-non-diagonal pairs in $C_2$. Notice that either $\{u_R,x_1^1\}$ or $\{u_R,x_1^2\}$ is a mod-non-diagonal pair in $C_1$, implying that either $R,P^1$ or $R,P^2$ are two clashing paths, a contradiction. Thus we conclude that $v_R\neq x_2^3$, and $R,P^3$ are disjoint. Note that $u_R,x_1^3$ are mod-non-diagonal in $C_1$. If $v_R\in\{x_2^1,x_2^2\}$, then $v_R,x_2^3$ are mod-non-diagonal in $C_2$, and $R,P^3$ are two clashing paths, a contradiction. Now we conclude that $v_R\neq x_2^i$, and $R,P^i$ are disjoint for all $i=1,2,3$. 

Since $u_R,x_1^3$ are mod-non-diagonal in $C_1$, 
we see that $v_R,x_2^3$ are mod-diagonal in $C_1$, i.e., $|\overrightarrow{C_2}[v_R,x_2^3]|\equiv|\overleftarrow{C_2}[v_R,x_2^3]|\equiv 1$. 
If $v_R\in V(\overrightarrow{C_2}(x_2^1,x_2^2))$, 
then both $\{v_R,x_2^1\}$ and $\{v_R,x_2^2\}$ are mod-non-diagonal pairs in $C_2$, and either $R,P^1$ or $R,P^2$ are two clashing paths, a contradiction. So we conclude that $v_R\in V(\overrightarrow{C_2}(x_2^2,x_2^1))\backslash\{x_2^3\}$.

Recall that $|\overrightarrow{C_2}[v_R,x_2^3]|\equiv|\overleftarrow{C_2}[v_R,x_2^3]|\equiv 1$. If $v_R\in V(\overrightarrow{C_2}(x_2^2,x_2^3))$, then $v_R,x_2^1$ are mod-non-diagonal in $C_2$, implying that $u_R,x_1^1$ are mod-diagonal in $C_1$. That is, $|\overrightarrow{C_1}[x_1^1,u_R]|\equiv 1$. If $v_R\in V(\overrightarrow{C_2}(x_2^3,x_2^1))$, then $v_R,x_2^2$ are mod-non-diagonal in $C_2$, implying that $u_R,x_1^2$ are mod-diagonal in $C_1$. That is, $|\overrightarrow{C_1}[u_R,x_1^2]|\equiv 1$ and $|\overrightarrow{C_1}[x_1^1,u_R]|\equiv 2$.
\end{proof}

Let $P^4$ be a path from $\overrightarrow{C_1}(x_1^1,x_1^2)$ to $\overrightarrow{C_2}(x_2^2,x_2^1)-x_2^3$ with all internal vertices in $V(G)\backslash V(M)$, as in Claim \ref{ClP4}, and let $x_1^4,x_2^4$ be the origin and terminus of $P^4$. By symmetric, we can assume that $x_2^4\in V(\overrightarrow{C_2}(x_2^2,x_2^3))$, and $|\overrightarrow{C_1}[x_1^1,x_1^4]|\equiv|\overrightarrow{C_2}[x_2^4,x_2^3]|\equiv 1$. Now we have that $|\overrightarrow{C_1}[x_1^2,x_1^3]|\equiv|\overrightarrow{C_1}[x_1^3,x_1^1]|\equiv|\overrightarrow{C_1}[x_1^1,x_1^4]|\equiv 1$ and $|\overrightarrow{C_1}[x_1^4,x_1^2]|\equiv 2$,
see Figure \ref{fig:P4}. 
We show that in fact the equations hold without the necessity of taking modulo 3. 

\begin{figure}[ht]
\centering
\begin{tikzpicture}[scale=0.6]
\draw(0,0) circle (2); 
\draw(7,0){coordinate (o2)} circle (2);
\coordinate (x11) at (60:2); \node at (60:1.5) {$x_1^1$};
\coordinate (x14) at (25:2); \node at (20:1.5) {$x_1^4$};
\coordinate (x12) at (-25:2); \node at (-20:1.5) {$x_1^2$};
\coordinate (x13) at (300:2); \node at (300:1.5) {$x_1^3$};
\foreach \x in {1,2,3,4} \draw[fill=black] (x1\x) circle (0.1);
\path(o2)--+(120:2) coordinate (x21); \path(o2)--+(120:1.5){node{$x_2^1$}};
\path(o2)--+(155:2) coordinate (x22); \path(o2)--+(160:1.5){node{$x_2^2$}};
\path(o2)--+(205:2) coordinate (x24); \path(o2)--+(200:1.5){node{$x_2^4$}};
\path(o2)--+(240:2) coordinate (x23); \path(o2)--+(240:1.5){node{$x_2^3$}};
\foreach \x in {1,2,3,4} \draw[fill=black] (x2\x) circle (0.1);
\node at (0:2.3) {2}; \node at (40:2.3) {1}; \node at (180:2.3) {1}; \node at (-40:2.3) {1};
\path(o2)--+(180:2.3){node{1}}; \path(o2)--+(220:2.3){node{1}};
\path(o2)--+(0:2.3){node{2}}; \path(o2)--+(140:2.3){node{1}};
\coordinate (p1) at (3.5,2); \coordinate (p2) at (3.5,0); \coordinate (p3) at (3.5,-2);
\draw(x11) to [out=10, in=180] (p1); \draw(p1) to [out=0, in=170] (x21);
\draw(x12)..controls (3.5,-1) and (3.5,1)..(x22);
\draw(x14)..controls (3.5,1) and (3.5,-1)..(x24);
\draw(x13) to [out=-10, in=180] (p3); \draw(p3) to [out=0, in=-170] (x23); 
\foreach \x in {1,3} \node[above] at (p\x) {$P^\x$};
\node[above] at (2.7,0.6) {$P^4$}; \node[above] at (4.3,0.6) {$P^2$};
\node at (135:2.3) {$C_1$}; \path(o2)--+(45:2.3){node{$C_2$}};
\end{tikzpicture}
\caption{The cycles $C_1,C_2$ and paths $P^1,P^2,P^3,P^4$.}\label{fig:P4}
\end{figure}

\begin{claim}\label{ClLengthC1C2}
$|\overrightarrow{C_1}[x_1^2,x_1^3]|=|\overrightarrow{C_1}[x_1^3,x_1^1]|=|\overrightarrow{C_1}[x_1^1,x_1^4]|=1$ and $|\overrightarrow{C_1}[x_1^4,x_1^2]|=2$; $|\overrightarrow{C_2}[x_2^1,x_2^2]|=|\overrightarrow{C_2}[x_2^2,x_2^4]|=|\overrightarrow{C_2}[x_2^4,x_2^3]|=1$ and $|\overrightarrow{C_2}[x_2^3,x_2^1]|=2$.
\end{claim}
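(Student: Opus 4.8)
The plan is to upgrade the three congruences to exact equalities by showing $|C_1|=|C_2|=5$. Indeed, $x_1^1,x_1^2,x_1^3,x_1^4$ are four distinct vertices of $C_1$ (since $x_1^4\in V(\overrightarrow{C_1}(x_1^1,x_1^2))$), so $|C_1|\geqslant 4$; as $|C_1|\equiv 2\bmod 3$ this gives $|C_1|\geqslant 5$, and the four arcs of $C_1$ cut out by these vertices have lengths $\equiv 1,2,1,1\bmod 3$ and each at least $1$, so they sum to at least $5$, with equality precisely when they are $1,2,1,1$. Hence it suffices to prove $|C_1|=5$, and then $|C_2|=5$ by the same argument applied to $C_2$. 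Moreover one checks that the labelled configuration of Figure \ref{fig:P4} is invariant under interchanging the roles of $C_1$ and $C_2$ (after reversing an orientation and relabelling the $x_i^j$ and the paths $P^j$ accordingly), and under a reflection of $C_1$ exchanging $x_1^1\leftrightarrow x_1^3$ and $x_1^2\leftrightarrow x_1^4$ (together with $x_2^1\leftrightarrow x_2^3$, $x_2^2\leftrightarrow x_2^4$ and $P^1\leftrightarrow P^3$, $P^2\leftrightarrow P^4$); using these symmetries, only a few representative arcs need to be treated.

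So suppose for contradiction that $|C_1|\geqslant 8$. Since each arc of $C_1$ is $\equiv 1$ or $2\bmod 3$ and they do not all attain their minimum, some arc $\sigma$ has length at least $4$, hence at least three internal vertices. Let $s_1,s_2$ be the first two internal vertices of $\sigma$. By Claim \ref{claim: rho} we have $\rho(\{s_1,s_2\})>3$, and since $s_1s_2\in E(G)$ this forces $d_G(s)\geqslant 3$ for some $s\in\{s_1,s_2\}$; note $s\notin\{x_1^1,x_1^2,x_1^3,x_1^4\}$ and $|\overrightarrow{C_1}[w,s]|\in\{1,2\}$, where $w$ is the appropriate endpoint of $\sigma$. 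As $C_1$ is induced, $s$ has a neighbour off $C_1$, and by Claim \ref{claim: adjacent to two cycles} the graph $G-C_1$ is connected and contains $C_2$, so there is a path from $s$ to $V(C_2)$ with all internal vertices outside $C_1$. Truncating and rerouting this path along $P^1,\dots,P^4$ exactly as the path $Q$ was built in Claim \ref{ClEitherC1C2}, we obtain a path $R$ from $C_1$ to $C_2$ with origin $u_R=s$ and terminus $v_R\in V(C_2)$ that is disjoint from at least three of $P^1,P^2,P^3,P^4$.

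Picking two of those three, say $P^a,P^b$, the paths $R,P^a,P^b$ are pairwise disjoint, so Claim \ref{ClQRExactlyOne} applies: $|R|\equiv 1\bmod 3$, and for each of the three pairs the two endpoints are mod-diagonal in exactly one of $C_1,C_2$. We then combine this with the mod-diagonal pattern of $x_1^1,\dots,x_1^4$ in $C_1$ and of $x_2^1,\dots,x_2^4$ in $C_2$ (read off from $(\alpha_1,\beta_1,\gamma_1)\equiv(0,1,1)$, $(\alpha_2,\beta_2,\gamma_2)\equiv(1,2,2)$ and the placement of $x_1^4,x_2^4$) and with the fact that $u_R$ lies strictly inside $\sigma$ at distance $1$ or $2$ from $w$. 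Running through the (few, after applying the symmetries above) possibilities for which arc $\sigma$ is, for which of $P^1,\dots,P^4$ the path $R$ merges into — equivalently, where $v_R$ lies on $C_2$ — and for the exact value of $|\overrightarrow{C_1}[w,u_R]|$, in every case one can exhibit either two clashing paths (contradicting Lemma \ref{LeC1C2P1P2}), two parallel paths (contradicting Claim \ref{ClQRExactlyOne}), or a $(1\bmod 3)$-cycle assembled from $R$, arcs of $C_1$ and $C_2$, and sub-paths of $P^a,P^b$. This contradiction gives $|C_1|=5$, hence $|C_2|=5$, and the stated equalities follow.

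The hard part is this closing case analysis. Each single case is just a short length computation modulo $3$ of the same flavour as in Claims \ref{ClEitherC1C2}–\ref{ClP4}, but there are several of them, and the delicate points are (i) checking that the rerouted path $R$ really is disjoint from the two paths $P^a,P^b$ invoked against it, and (ii) keeping precise track of where $v_R$ sits on $C_2$ relative to $x_2^1,\dots,x_2^4$, since this is what decides whether "clashing", "parallel", or "$(1\bmod 3)$-cycle" is the tool that applies. The symmetries of Figure \ref{fig:P4} are what keep the bookkeeping finite and manageable.
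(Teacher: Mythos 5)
Your opening is identical to the paper's: reduce to $|C_1|=|C_2|=5$, assume $|C_1|\geqslant 8$, pick an arc $\sigma$ of length $\geqslant 4$, use Claim~\ref{claim: rho} on its first two internal vertices to find $s$ of degree $\geqslant 3$, and reroute a path $R$ from $s$ to $C_2$ as in Claim~\ref{ClEitherC1C2}. The gap is the closing case analysis, which you defer and which does not close with the toolkit you describe.

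Take $\sigma=\overrightarrow{C_1}[x_1^3,x_1^1]$. The clashing and parallel exclusions pin $R$ down to $u_R=x_1^{3+}$ and $v_R=x_2^1$. Now suppose the rerouted path merges into $P^1$, i.e.\ it meets $P^1$ at an internal vertex and follows it to $x_2^1$; then $R$ is disjoint only from $P^2,P^3,P^4$. One checks that no pair $(R,P^i)$ with $i\in\{2,3,4\}$ is clashing or parallel, and that every cycle built from $R$, arcs of $C_1,C_2$, and sub-paths of $P^2,P^3,P^4$ has length $\equiv 0$ or $2\bmod 3$: the graph $C_1\cup C_2\cup P^2\cup P^3\cup P^4\cup R$ is, modulo $3$, just $M\cup P^4$ again with $x_1^1,P^1$ relabelled as $x_1^{3+},R$, and that configuration carries no $(1\bmod 3)$-cycle. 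So ``clashing, parallel, or a $(1\bmod 3)$-cycle from $R$, arcs, and sub-paths of $P^a,P^b$'' produces nothing here. The paper escapes by constructing a \emph{second} rerouted path from the opposite end of the same arc, pinned to $u_{R'}=x_1^{1-}$, $v_{R'}=x_2^3$, and then branches: if $R,R'$ are disjoint they form a clashing pair; if not, $R\cup R'$ contains a $(u_R,u_{R'})$-path $S$ internally avoiding $C_1\cup C_2\cup P^2\cup P^4$, to which Lemma~\ref{LePQ1Q2Q3} applies with the two arcs of $C_1$ and the detour through $P^2$--$C_2$--$P^4$. Your sketch never introduces a second extra path nor contemplates two rerouted paths intersecting, so it cannot reach this contradiction. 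A secondary point: the paper does not handle the four arcs uniformly --- the arcs $\overrightarrow{C_1}[x_1^1,x_1^4]$ and $\overrightarrow{C_1}[x_1^4,x_1^2]$ are dispatched by substituting the new path for $P^1$ or $P^4$ and re-running the $\overrightarrow{C_1}[x_1^3,x_1^1]$ argument, and the last arc requires a carefully placed internal pair rather than the first two, so a one-size-fits-all ``first two internal vertices, one path'' template is too weak.
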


\begin{proof}
Suppose first that $|\overrightarrow{C_1}[x_1^3,x_1^1]|\neq 1$. 
Then $|\overrightarrow{C_1}[x_1^3,x_1^1]|\geqslant 4$ by $|\overrightarrow{C_1}[x_1^3,x_1^1]|\equiv 1$. 
Now there exists a path $Q$ from $C_1$ to $C_2$ with origin $u_Q\in\{x_1^{3+},x_1^{3++}\}$ and termini $v_Q\in V(C_2)$ 
such that $Q$ is disjoint with $P^i$ unless $v_Q=x_2^i$ for $i=1,\ldots,4$. 
If $u_Q=x_1^{3++}$, 
then $u_Q$ is mod-non-diagonal with all of $x_1^i$ in $C_1$. Notice that every vertex in $C_2$ is mod-non-diagonal with at least one of $x_2^i$ in $C_2$. 
It follows that $Q,P^i$ are two clashing paths for some $i=1,\ldots,4$, a contradiction. 
Thus we conclude that $u_Q=x_1^{3+}$. 
It follows that both $u_Q,x_1^1$ and $u_Q,x_1^2$ are mod-non-diagonal in $C_1$. 
If $v_Q\notin\{x_2^1,x_2^2\}$, 
then either $v_Q,x_2^1$ or $v_Q,x_2^2$ are mod-non-diagonal in $C_2$, and either $Q,P^1$ or $Q,P^2$ are two clashing paths, a contradiction. If $v_Q=x_2^2$, then $u_Q,x_1^4$ are mod-diagonal in $C_1$ and $v_Q,x_2^4$ are mod-diagonal in $C_2$, implying that $Q,P^4$ are two parallel paths, contradicting Claim \ref{ClQRExactlyOne}. Thus we conclude that $v_Q=x_2^1$.

By a similar analysis to that presented above, there exists a path $R$ from $C_1$ to $C_2$ with origin $u_R=x_1^{1-}$ and terminus $v_R=x_2^3$, such that $R$ is disjoint with $P^1,P^2,P^4$. Note that $u_Q,u_R$ are mod-non-diagonal in $C_1$ and $v_Q,v_R$ are mod-non-diagonal in $C_2$. If $Q,R$ are disjoint, see Figure \ref{FiQRSx13x11} (i), then $Q,R$ are two clashing paths, a contradiction. So assume that $V(Q)\cap V(R)\neq\emptyset$. It follows that $Q\cup R$ contains a $(u_Q,u_R)$-path $S$ with all internal vertices in $G-(C_1\cup C_2\cup P^2\cup P^4)$, see Figure \ref{FiQRSx13x11} (ii). Now the three paths $\overrightarrow{C_1}[u_Q,u_R]$, $\overleftarrow{C_1}[u_Q,u_R]$, $\overleftarrow{C_1}[u_Q,x_1^2]x_1^2P^2x_2^2\overrightarrow{C_2}[x_2^2,x_2^4]x_2^4P^4x_1^4\overleftarrow{C_1}[x_1^4,u_R]$ are internally-disjoint with $S$, and of lengths $2,\ 0,\ 1\bmod 3$, respectively, a contradiction.

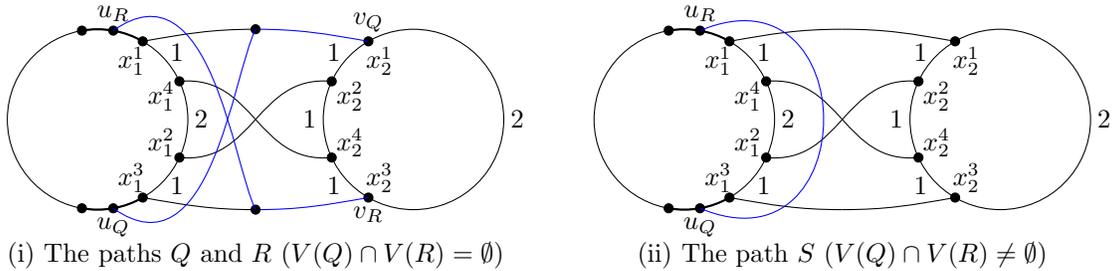
\begin{figure}[ht]
\centering
\begin{tikzpicture}[scale=0.6]

\begin{scope}
\draw(0,0) circle (2); 
\draw(7,0){coordinate (o2)} circle (2);
\coordinate (x11) at (60:2); \node at (60:1.5) {$x_1^1$};
\coordinate (x14) at (25:2); \node at (20:1.5) {$x_1^4$};
\coordinate (x12) at (-25:2); \node at (-20:1.5) {$x_1^2$};
\coordinate (x13) at (300:2); \node at (300:1.5) {$x_1^3$};
\foreach \x in {1,2,3,4} \draw[fill=black] (x1\x) circle (0.1);
\path(o2)--+(120:2) coordinate (x21); \path(o2)--+(120:1.5){node{$x_2^1$}};
\path(o2)--+(155:2) coordinate (x22); \path(o2)--+(160:1.5){node{$x_2^2$}};
\path(o2)--+(205:2) coordinate (x24); \path(o2)--+(200:1.5){node{$x_2^4$}};
\path(o2)--+(240:2) coordinate (x23); \path(o2)--+(240:1.5){node{$x_2^3$}};
\foreach \x in {1,2,3,4} \draw[fill=black] (x2\x) circle (0.1);
\node at (0:2.3) {2}; \node at (40:2.3) {1}; \node at (-40:2.3) {1};
\path(o2)--+(180:2.3){node{1}}; \path(o2)--+(220:2.3){node{1}};
\path(o2)--+(0:2.3){node{2}}; \path(o2)--+(140:2.3){node{1}};
\coordinate (p1) at (3.5,2); \coordinate (p3) at (3.5,-2);
\draw(x11) to [out=10, in=180] (p1); \draw[blue] (p1) to [out=0, in=170] (x21);
\draw(x12)..controls (3.5,-1) and (3.5,1)..(x22);
\draw(x14)..controls (3.5,1) and (3.5,-1)..(x24);
\draw(x13) to [out=-10, in=180] (p3); \draw[blue] (p3) to [out=0, in=-170] (x23); 
\draw[fill=black] (80:2) {coordinate (uR)} {node[above] {$u_R$}} circle (0.1); 
\draw[fill=black] (280:2) {coordinate (uQ)} {node[below] {$u_Q$}} circle (0.1);
\node[above] at (x21) {$v_Q$}; \node[below] at (x23) {$v_R$};
\draw[fill=black] (100:2) circle (0.1); \draw[fill=black] (260:2) circle (0.1);
\draw[thick] (60:2) arc (60:100:2); \draw[thick] (260:2) arc (260:300:2);
\draw[fill=black] (p1) circle (0.1); \draw[fill=black] (p3) circle (0.1);
\draw[blue] (uR)..controls (2.5,3.5) and (3,-1)..(p3);
\draw[blue] (uQ)..controls (2.5,-3.5) and (3,1)..(p1);
\node[below] at (3.5,-2.5) {(i) The paths $Q$ and $R$ ($V(Q)\cap V(R)=\emptyset$)};
\end{scope}

\begin{scope}[xshift=13cm]
\draw(0,0) circle (2); 
\draw(7,0){coordinate (o2)} circle (2);
\coordinate (x11) at (60:2); \node at (60:1.5) {$x_1^1$};
\coordinate (x14) at (25:2); \node at (20:1.5) {$x_1^4$};
\coordinate (x12) at (-25:2); \node at (-20:1.5) {$x_1^2$};
\coordinate (x13) at (300:2); \node at (300:1.5) {$x_1^3$};
\foreach \x in {1,2,3,4} \draw[fill=black] (x1\x) circle (0.1);
\path(o2)--+(120:2) coordinate (x21); \path(o2)--+(120:1.5){node{$x_2^1$}};
\path(o2)--+(155:2) coordinate (x22); \path(o2)--+(160:1.5){node{$x_2^2$}};
\path(o2)--+(205:2) coordinate (x24); \path(o2)--+(200:1.5){node{$x_2^4$}};
\path(o2)--+(240:2) coordinate (x23); \path(o2)--+(240:1.5){node{$x_2^3$}};
\foreach \x in {1,2,3,4} \draw[fill=black] (x2\x) circle (0.1);
\node at (0:2.3) {2}; \node at (40:2.3) {1}; \node at (-40:2.3) {1};
\path(o2)--+(180:2.3){node{1}}; \path(o2)--+(220:2.3){node{1}};
\path(o2)--+(0:2.3){node{2}}; \path(o2)--+(140:2.3){node{1}};
\coordinate (p1) at (3.5,2); \coordinate (p3) at (3.5,-2);
\draw(x11) to [out=10, in=180] (p1); \draw(p1) to [out=0, in=170] (x21);
\draw(x12)..controls (3.5,-1) and (3.5,1)..(x22);
\draw(x14)..controls (3.5,1) and (3.5,-1)..(x24);
\draw(x13) to [out=-10, in=180] (p3); \draw(p3) to [out=0, in=-170] (x23); 
\draw[fill=black] (80:2) {coordinate (uR)} {node[above] {$u_R$}} circle (0.1); 
\draw[fill=black] (280:2) {coordinate (uQ)} {node[below] {$u_Q$}} circle (0.1);
\draw[fill=black] (100:2) circle (0.1); \draw[fill=black] (260:2) circle (0.1);
\draw[thick] (60:2) arc (60:100:2); \draw[thick] (260:2) arc (260:300:2);
\draw[blue] (uR)..controls (4,3.5) and (4,-3.5)..(uQ);
\node[below] at (3.5,-2.5) {(ii) The path $S$ ($V(Q)\cap V(R)\neq\emptyset$)};
\end{scope}

\end{tikzpicture}
\caption{The paths $Q,R$ and $S$ in Claim \ref{ClLengthC1C2} with $|\overrightarrow{C_1}[x_1^3,x_1^1]|\neq 1$.}\label{FiQRSx13x11}
\end{figure}

Suppose second that $|\overrightarrow{C_1}[x_1^1,x_1^4]|\neq 1$. Then $|\overrightarrow{C_1}[x_1^1,x_1^4]|\geqslant 4$. there exists a path $Q$ from $C_1$ to $C_2$ with origin $u_Q\in\{x_1^{4-},x_1^{4--}\}$ and terminus $v_Q\in V(C_2)$ such that $Q$ is disjoint with $P^i$ unless $v_Q=x_2^i$ for $i=1,\ldots,4$. If $u_Q=x_1^{4--}$, then $u_Q$ is mod-non-diagonal with all vertices in $\{x_1^1,x_1^3,x_1^4\}$ in $C_1$. Notice that every vertex in $C_2$ is mod-non-diagonal with at least one vertex in $\{x_2^1,x_2^3,x_2^4\}$ in $C_2$. It follows that $Q,P^i$ are two clashing paths for some $i=1,3,4$, a contradiction. Now we conclude that $u_Q=x_1^{4-}$, and thus both $u_Q,x_1^1$ and $u_Q,x_1^2$ are mod-non-diagonal in $C_1$. If $v_Q\notin\{x_2^1,x_2^2\}$, then either $v_Q,x_2^1$ or $v_Q,x_2^2$ are mod-non-diagonal in $C_2$, and either $Q,P^1$ or $Q,P^2$ are two clashing paths, a contradiction. If $v_Q=x_2^2$, then $Q,P^4$ are two disjoint paths from $C_1$ to $C_2$ such that their origins are mod-diagonal in $C_1$ and their termini are mod-diagonal in $C_2$, implying that $Q,P^4$ are two parallel paths, a contradiction. So we conclude that $v_Q=x_2^1$, see Figure \ref{FiQx11x14x12} (i). Now we have that $|\overrightarrow{C_1}[x_1^3,u_Q]|\equiv|\overrightarrow{C_1}[u_Q,x_1^4]|\equiv 1$. By the same analysis as above (taking $Q$ instead of $P^1$), we can conclude that $|\overrightarrow{C_1}[x_1^3,u_Q]|=1$, a contradiction.

\begin{figure}[ht]
\centering
\begin{tikzpicture}[scale=0.6]

\begin{scope}
\draw(0,0) circle (2); 
\draw(7,0){coordinate (o2)} circle (2);
\coordinate (x11) at (60:2); \node at (60:1.5) {$x_1^1$};
\coordinate (x14) at (25:2); \node at (20:1.5) {$x_1^4$};
\coordinate (x12) at (-25:2); \node at (-20:1.5) {$x_1^2$};
\coordinate (x13) at (300:2); \node at (300:1.5) {$x_1^3$};
\foreach \x in {1,2,3,4} \draw[fill=black] (x1\x) circle (0.1);
\path(o2)--+(120:2) coordinate (x21); \path(o2)--+(120:1.5){node{$x_2^1$}};
\path(o2)--+(155:2) coordinate (x22); \path(o2)--+(160:1.5){node{$x_2^2$}};
\path(o2)--+(205:2) coordinate (x24); \path(o2)--+(200:1.5){node{$x_2^4$}};
\path(o2)--+(240:2) coordinate (x23); \path(o2)--+(240:1.5){node{$x_2^3$}};
\foreach \x in {1,2,3,4} \draw[fill=black] (x2\x) circle (0.1);
\node at (0:2.3) {2}; \node at (180:2.3) {1}; \node at (-40:2.3) {1};
\path(o2)--+(180:2.3){node{1}}; \path(o2)--+(220:2.3){node{1}};
\path(o2)--+(0:2.3){node{2}}; \path(o2)--+(140:2.3){node{1}};
\coordinate (p1) at (3.5,2); \coordinate (p3) at (3.5,-2);
\draw(x11) to [out=10, in=180] (p1); \draw(p1) to [out=0, in=170] (x21);
\draw(x12)..controls (3.5,-1) and (3.5,1)..(x22);
\draw(x14)..controls (3.5,1) and (3.5,-1)..(x24);
\draw(x13) to [out=-10, in=180] (p3); \draw(p3) to [out=0, in=-170] (x23); 
\draw[fill=black] (40:2) {coordinate (uQ)} circle (0.1);
\draw[thick] (x14) arc (25:40:2);
\draw[blue] (uQ) to [bend right=10] (x21);
\node at (2,1.5) {$u_Q$};
\node[above] at (x21) {$v_Q$}; 
\node[below] at (3.5,-2.5) {(i) $|\overrightarrow{C_1}[x_1^1,x_1^4]|\neq 1$};
\end{scope}

\begin{scope}[xshift=13cm]
\draw(0,0) circle (2); 
\draw(7,0){coordinate (o2)} circle (2);
\coordinate (x11) at (60:2); \node at (60:1.5) {$x_1^1$};
\coordinate (x14) at (25:2); \node at (36:1.5) {$x_1^4$};
\coordinate (x12) at (-25:2); \node at (-36:1.5) {$x_1^2$};
\coordinate (x13) at (300:2); \node at (300:1.5) {$x_1^3$};
\foreach \x in {1,2,3,4} \draw[fill=black] (x1\x) circle (0.1);
\path(o2)--+(120:2) coordinate (x21); \path(o2)--+(120:1.5){node{$x_2^1$}};
\path(o2)--+(155:2) coordinate (x22); \path(o2)--+(160:1.5){node{$x_2^2$}};
\path(o2)--+(205:2) coordinate (x24); \path(o2)--+(200:1.5){node{$x_2^4$}};
\path(o2)--+(240:2) coordinate (x23); \path(o2)--+(240:1.5){node{$x_2^3$}};
\foreach \x in {1,2,3,4} \draw[fill=black] (x2\x) circle (0.1);
\node at (40:2.3) {1}; \node at (180:2.3) {1}; \node at (-40:2.3) {1};
\path(o2)--+(180:2.3){node{1}}; \path(o2)--+(220:2.3){node{1}};
\path(o2)--+(0:2.3){node{2}}; \path(o2)--+(140:2.3){node{1}};
\coordinate (p1) at (3.5,2); \coordinate (p3) at (3.5,-2);
\draw(x11) to [out=10, in=180] (p1); \draw(p1) to [out=0, in=170] (x21);
\draw(x12)..controls (3.5,-1) and (3.5,1)..(x22);
\draw(x14)..controls (3.5,1) and (3.5,-1)..(x24);
\draw(x13) to [out=-10, in=180] (p3); \draw(p3) to [out=0, in=-170] (x23); 
\draw[fill=black] (8:2) {coordinate (y1)} circle (0.1); 
\draw[fill=black] (-8:2) {coordinate (z1)} circle (0.1);
\node at (2.5,-0.3) {$u_Q$}; \node at (4.7,-0.5) {$v_Q$};
\node at (15:2.3) {2}; \node at (-15:2.3) {2}; 
\draw[thick] (y1) arc (8:-8:2);
\node at (12:1.5) {$y_1$}; \node at (-12:1.5) {$z_1$}; 
\draw[blue] (z1) to [bend right=20] (x24);
\node[below] at (3.5,-2.5) {(ii) $|\overrightarrow{C_1}[x_1^4,x_1^2]|\neq 2$};
\end{scope}

\end{tikzpicture}
\caption{The path $Q$ in Claim \ref{ClLengthC1C2} with $|\overrightarrow{C_1}[x_1^3,x_1^1]|=1$.}\label{FiQx11x14x12}
\end{figure}

By symmetric we have that $|\overrightarrow{C_1}[x_1^2,x_1^3]|=1$.

Suppose third that $|\overrightarrow{C_1}[x_1^4,x_1^2]|\neq 2$. Then $|\overrightarrow{C_1}[x_1^4,x_1^2]|\geqslant 5$, implying that there are at least 4 vertices in $\overrightarrow{C_1}(x_1^4,x_1^2)$. Let $y_1,z_1$ be two vertices in $V(\overrightarrow{C_1}(x_1^4,x_1^2))$ such that $|\overrightarrow{C_1}[x_1^4,y_1]|\equiv 2$ and $z_1=y_1^+$ (so $|\overrightarrow{C_1}[z_1,x_1^2]|\equiv 2$). Then there exists a path $Q$ from $C_1$ to $C_2$ with origin $u_Q\in\{y_1,z_1\}$ and terminus $v_Q\in V(C_2)$ such that $Q,P^i$ are disjoint unless $v_Q=x_2^i$, $i=1,\ldots,4$. By symmetric we can assume that $u_Q=z_1$. Thus $u_Q$ is mod-non-diagonal with all vertices in $\{x_1^2,x_1^3,x_1^4\}$. If $v_Q\notin\{x_2^3,x_2^4\}$, then $v_Q$ is mod-non-diagonal with either $x_2^3$ or $x_2^4$ in $C_2$; if $v_Q=x_2^3$, then $v_Q$ is mod-non-diagonal with $x_2^2$. For each of the above case $Q,P^i$ are two clashing paths for some $i=2,3,4$. So we assume that $v_Q=x_2^4$, see Figure \ref{FiQx11x14x12} (ii). By the same analysis as above (taking $Q$ instead of $P^4$), we can conclude that $|\overrightarrow{C_1}[x_1^1,u_Q]|=1$, a contradiction.

The second assertion can be proved similarly.
\end{proof}

By Cliam \ref{ClLengthC1C2} we see that $|C_1|=|C_2|=5$. Set $x_1^5=x_1^{4+}$ and $x_2^5=x_2^{3+}$. Then $C_1=x_1^1x_1^4x_1^5x_1^2x_1^3x_1^1$, $C_2=x_2^1x_2^2x_2^4x_2^3x_2^5x_2^1$. By Claim \ref{ClQRExactlyOne}, it holds that $|P^1|\equiv|P^2|\equiv|P^3|\equiv|P^4|\equiv 1$. We now show that all the four paths in fact have lengths exactly 1.

\begin{claim}\label{ClLengthP1P2P3P4}
$|P^1|=|P^2|=|P^3|=|P^4|=1$.
\end{claim}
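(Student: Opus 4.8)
The plan is to argue by contradiction: suppose $|P^j|\geq 4$ for some $j\in\{1,2,3,4\}$. First I would observe that the combinatorial pattern fixed just before this claim---the two $5$-cycles $C_1=x_1^1x_1^4x_1^5x_1^2x_1^3x_1^1$, $C_2=x_2^1x_2^2x_2^4x_2^3x_2^5x_2^1$, together with the paths $P^i\colon x_1^i\leftrightarrow x_2^i$---is invariant, after relabelling the paths by the $4$-cycle $1\mapsto 4\mapsto 3\mapsto 2\mapsto 1$ (fixing the index $5$) and simultaneously exchanging $C_1$ with $C_2$. Since everything used below (Lemma~\ref{LeC1C2P1P2}, Claim~\ref{ClQRExactlyOne}, Claim~\ref{claim: essentially-3-con}, and the mod-diagonality pattern on $C_1,C_2$) is preserved by this symmetry and the permutation of $\{1,2,3,4\}$ is a single $4$-cycle, it suffices to treat $j=1$. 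Because $|P^1|\equiv 1\pmod 3$, the hypothesis forces $|P^1|\geq 4$, so the interior of $P^1$ is a nontrivial connected subgraph.

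Write $M=C_1\cup C_2\cup P^1\cup P^2\cup P^3\cup P^4$. The key step is to produce a \emph{detour}: a shortest path $R_0$ in $G-\{x_1^1,x_2^1\}$ from the set of interior vertices of $P^1$ to $V(M)\setminus V(P^1)$. If no such path exists, then $\{x_1^1,x_2^1\}$ separates the interior of $P^1$ (together with the components of $G-V(M)$ reachable from it) from $(C_1-x_1^1)\cup(C_2-x_2^1)\cup P^2\cup P^3\cup P^4$, and both sides are nontrivial, contradicting the essential $3$-connectivity of Claim~\ref{claim: essentially-3-con}. So $R_0$ exists; by shortness its internal vertices avoid $V(M)$, it meets $P^1$ only in its origin $\omega$, and---crucially---its terminus $v_0$ lies in $V(M)\setminus V(P^1)$, hence $v_0\notin\{x_1^1,x_2^1\}$. (Without this, a detour could close up with a subpath of $P^1$ into a cycle of uncontrolled length modulo $3$, and nothing is gained.)

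Next I would splice $R_0$ with a suitable subpath of $P^1$ and, when $v_0$ lies in the interior of some $P^i$ ($i\in\{2,3,4\}$), with the subpath of that $P^i$ from $v_0$ to one of its endpoints, to obtain a genuine path $R$ from $C_1$ to $C_2$ that is disjoint from at least two of $P^2,P^3,P^4$, has $x_2^1$ or $x_1^1$ as one endpoint, and whose other endpoint is $v_0$ (or the $C_1$-endpoint of the $P^i$ containing $v_0$). A short case analysis over the finitely many locations of $v_0$ then finishes the argument: using Claim~\ref{ClLengthC1C2}---equivalently, on each of $C_1,C_2$ two attachment vertices are mod-non-diagonal exactly when they are at cyclic distance $2$, and mod-diagonal when adjacent---one checks in every case that $R$ forms with one of the paths disjoint from it either a clashing pair, contradicting Lemma~\ref{LeC1C2P1P2}, or a parallel pair with a third of the $P^i$'s as witness, contradicting Claim~\ref{ClQRExactlyOne}.

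The main obstacle is the second step: setting up the detour so that $v_0\notin V(P^1)$ (this is precisely where essential $3$-connectivity, rather than mere $2$-connectivity, is needed) and organising the splicing so that $R$ is a legitimate $C_1$--$C_2$ path internally disjoint from the intended collection among $P^2,P^3,P^4$. Once that is in place, verifying that a clashing or parallel configuration always arises is a mechanical check against the fixed mod-diagonality table on the two $5$-cycles, and the $4$-fold symmetry keeps the number of cases small.
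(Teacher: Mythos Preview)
Your proposal is correct and matches the paper's argument essentially step for step: the paper also assumes $|P^1|\ge 4$, invokes essential $3$-connectivity (Claim~\ref{claim: essentially-3-con}) to find a path $Q_1$ from the interior of $P^1$ to $(M\cup P^4)-P^1$, splices it with the appropriate subpath of $P^1$ (and of $P^i$ when the terminus lands in some $P^i$), and then runs the same short case check---clashing via Lemma~\ref{LeC1C2P1P2} when the terminus lies in $\{x_2^3,x_2^5\}$ or in $C_1\setminus\{x_1^1\}$, parallel via Claim~\ref{ClQRExactlyOne} when it is $x_2^2$ or $x_2^4$. The only cosmetic difference is that you front-load the four-fold symmetry $(1\,4\,3\,2)$ together with $C_1\leftrightarrow C_2$ (which is indeed an automorphism of the configuration fixed by Claim~\ref{ClLengthC1C2}), whereas the paper simply writes ``by symmetry'' at the end; your parenthetical ``the $C_1$-endpoint of the $P^i$'' should of course read ``the endpoint of $P^i$ on the cycle opposite to the chosen end of $P^1$'', but the intent is clear.
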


\begin{proof}
Suppose that $|P^1|\neq 1$. Then $|P^1|\geqslant 4$ since $|P^1|\equiv 1$. Let $H$ be the component of $G-((M\cup P^4)-P^1(x_1^1,x_2^1))$ containing $P^1(x_1^1,x_2^1)$. Notice that $H$ is nontrivial. Since $G$ is essentially 3-connected, $N(H)\backslash\{x_1^1,x_2^1\}\neq\emptyset$. It follows that there exists a path $Q_1$ from $P^1(x_1^1,x_2^1)$ to $(M\cup P^4)-P^1$. Let $u_1$ and $v_1$ be the origin and terminus of $Q_1$. 

Suppose first that $v_1\in V(C_1)\backslash\{x_1^1\}$. Set $Q=P^1[x_2^1,v_1]v_1Q_1$. Now $Q$ is a path from $C_2$ to $C_1$ with origin $x_2^1$ and terminus $v_1$, such that $Q,P^i$ are disjoint unless $v_1=x_1^i$ for $i=2,3,4$. Recall that $|C_i|=5$, $i=1,2$, implying that each two vertices are mod-diagonal in $C_i$ if and only if they are adjacent in $C_i$. It follows that $x_2^1$ is mod-non-diagonal to both $x_2^3$ and $x_2^4$, and every vertex in $V(C_1)\backslash\{x_1^1\}$ is mod-non-diagonal to either $x_1^3$ or $x_1^4$. So we have that either $Q,P^3$ or $Q,P^4$ are two clashing paths, a contradiction. 

Suppose now that $v_1\in(V(C_2)\backslash\{x_1^1\})\cup\bigcup_{i=2}^4(V(P^i)\backslash\{x_1^i\})$. We set
\[
  Q=\left\{\begin{array}{l}
  P^1[x_1^1,u_1]u_1Q_1,\\
  P^1[x_1^1,u_1]u_1Q_1v_1P^2[v_1,x_2^2],\\
  P^1[x_1^1,u_1]u_1Q_1v_1P^3[v_1,x_2^3],\\
  P^1[x_1^1,u_1]u_1Q_1v_1P^4[v_1,x_2^4],
\end{array}\right. u_Q=x_1^1,\mbox{ and }v_Q=\left\{\begin{array}{ll}
  v_1,   & \mbox{if }v_1\in V(C_2)\backslash\{x_2^1\},\\ 
  x_2^2, & \mbox{if }v_1\in V(P^2)\backslash\{x_1^2,x_2^2\},\\
  x_2^3, & \mbox{if }v_1\in V(P^3)\backslash\{x_1^3,x_2^3\},\\
  x_2^4, & \mbox{if }v_1\in V(P^4)\backslash\{x_1^4,x_2^4\}.
\end{array}\right.
\]
Now $Q$ is a path from $C_1$ to $C_2$, such that $Q,P^i$ are disjoint unless $v_Q=x_2^i$ for $i=2,3,4$.

Note that $u_Q,x_1^2$ are mod-non-diagonal in $C_2$. If $v_Q\in\{x_2^3,x_2^5\}$, then $v_Q,x_2^2$ are mod-non-diagonal in $C_1$, and $Q,P^2$ are two clashing paths, a contradiction. If $v_Q=x_2^2$, then $u_Q,x_1^4$ are mod-diagonal in $C_1$ and $v_Q,x_2^4$ are mod-diagonal in $C_2$, implying that $Q,P^4$ are two parallel paths, a contradiction. If $v_Q=x_2^4$, then $u_Q,x_1^3$ are mod-diagonal in $C_1$ and $v_Q,x_2^3$ are mod-diagonal in $C_2$, implying that $Q,P^3$ are two parallel paths, also a contradiction. 

By symmetry, we can show that $|P^2|=|P^3|=|P^4|=1$.
\end{proof}

From Claims \ref{ClLengthC1C2} and \ref{ClLengthP1P2P3P4}, $M\cup P^4$ is the graph obtained from a Petersen graph by removing an edge, see Figure \ref{fig:Cilength5Pilength1}.

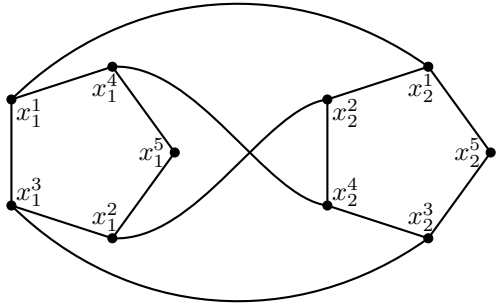
\begin{figure}[ht]
\centering
\begin{tikzpicture}[scale=0.6]
\foreach \x in {1,2,...,5} 
{\draw[fill=black] (\x*72:2) {coordinate (a\x)} circle (0.1);
\path(7,0)--+(\x*72:2) {coordinate (b\x)};
\draw[fill=black] (b\x) circle (0.1);}
\node at (72:1.5) {$x_1^4$}; \node at (144:1.5) {$x_1^1$};
\node at (-72:1.5) {$x_1^2$}; \node at (-144:1.5) {$x_1^3$};
\node at (0:1.5) {$x_1^5$};
\path(7,0)--+(72:1.5) {node {$x_2^1$}}; \path(7,0)--+(144:1.5) {node {$x_2^2$}};
\path(7,0)--+(-72:1.5) {node {$x_2^3$}}; \path(7,0)--+(-144:1.5) {node {$x_2^4$}};
\path(7,0)--+(0:1.5) {node {$x_2^5$}};
\draw[thick] (a1)--(a2)--(a3)--(a4)--(a5)--(a1);
\draw[thick] (b1)--(b2)--(b3)--(b4)--(b5)--(b1);
\draw[thick] (a2) to [bend left=40] (b1); \draw[thick] (a3) to [bend right=40] (b4);
\draw[thick] (a1)..controls (2.5,2) and (4,-1)..(b3);
\draw[thick] (a4)..controls (2.5,-2) and (4,1)..(b2);
\end{tikzpicture}
\caption{$|C_1|=|C_2|=5$ and $|P^1|=|P^2|=|P^3|=|P^4|=1$.}\label{fig:Cilength5Pilength1}
\end{figure}

\begin{claim}\label{ClPathFromC1toC2}
If $Q$ is a path from $C_1$ to $C_2$, then $Q$ is either one of $P^i$, $i=1,\ldots,4$, or $Q=x_1^5x_2^5$.
\end{claim}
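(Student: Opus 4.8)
The plan is to split the proof into two stages. \emph{Stage 1:} show that every path $Q$ from $C_1$ to $C_2$ has \emph{aligned} endpoints, i.e.\ writing $u_Q$ for its origin and $v_Q$ for its terminus, one has $u_Q=x_1^j$ and $v_Q=x_2^j$ for some $j\in\{1,\dots,5\}$. \emph{Stage 2:} show that a path with aligned endpoints has length exactly $1$, hence equals $P^j$ when $j\le 4$ and equals the edge $x_1^5x_2^5$ when $j=5$.

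For Stage~1 the key inputs are Claim~\ref{ClLengthC1C2} (so $|C_1|=|C_2|=5$, and in a $5$-cycle two distinct vertices are mod-diagonal iff they are adjacent), Lemma~\ref{LeC1C2P1P2} (no clashing paths), and the remark after Claim~\ref{ClQRExactlyOne} (no parallel paths). Fix $j\in\{1,2,3,4\}$ and an arbitrary path $Q$ from $C_1$ to $C_2$. If $Q$ is disjoint from $P^j$, then since $G$ has no $(1\bmod 3)$-cycle, Lemma~\ref{LeC1C2P1P2} forbids $u_Q,x_1^j$ from being mod-non-diagonal in $C_1$ while $v_Q,x_2^j$ are mod-non-diagonal in $C_2$; in a $5$-cycle this says $u_Q\in N_{C_1}[x_1^j]$ or $v_Q\in N_{C_2}[x_2^j]$ (closed neighbourhoods). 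If $Q$ is not disjoint from $P^j$, then $u_Q=x_1^j$ or $v_Q=x_2^j$, so the same disjunction holds. Now intersect these four conditions using the explicit orders $C_1=x_1^1x_1^4x_1^5x_1^2x_1^3x_1^1$ and $C_2=x_2^1x_2^2x_2^4x_2^3x_2^5x_2^1$: for $u_Q\in\{x_1^2,x_1^4,x_1^5\}$ the intersection pins $v_Q$ to the single vertex $x_2^2,x_2^4,x_2^5$ respectively; for $u_Q\in\{x_1^1,x_1^3\}$ it leaves three candidates for $v_Q$, of which the two non-aligned ones are excluded because in each case $Q$ together with one of $P^2,P^3,P^4$ would be two parallel paths (origins adjacent in $C_1$, termini adjacent in $C_2$, with a third spoke disjoint from both), contradicting Claim~\ref{ClQRExactlyOne}. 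The reflection of $M\cup P^4$ fixing $x_1^5,x_2^5$ and swapping $P^1\leftrightarrow P^3$, $P^2\leftrightarrow P^4$ halves this work, and running the same argument with $C_1,C_2$ interchanged shows that $v_Q=x_2^j$ likewise forces $u_Q=x_1^j$.

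For Stage~2, let $Q$ be a path from $C_1$ to $C_2$ with aligned endpoints $x_1^j,x_2^j$, and suppose $|Q|\ne 1$. Since $P^1,\dots,P^4$ are pairwise disjoint edges, $Q$ is disjoint from $P^\ell$ for every $\ell\ne j$, and some further spoke $P^{\ell'}$ is disjoint from both $Q$ and $P^\ell$; hence Claim~\ref{ClQRExactlyOne} gives $|Q|\equiv 1\pmod 3$, so $|Q|\ge 4$ and the interior of $Q$ has at least three vertices. Let $H_Q$ be the component of $G-(C_1\cup C_2)$ containing this interior; it is nontrivial. By Claim~\ref{claim: adjacent to two cycles}, $H_Q$ has a neighbour in $V(C_1)$; but if it had a neighbour $x_1^b\ne x_1^j$ in $V(C_1)$, then following an edge from $x_1^b$ into $H_Q$, through $H_Q$ to the interior of $Q$, and along $Q$ to $x_2^j$ would produce a path from $C_1$ to $C_2$ with non-aligned endpoints, contradicting Stage~1. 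So $N_{C_1}(H_Q)=\{x_1^j\}$ and, by the same reasoning, $N_{C_2}(H_Q)=\{x_2^j\}$; thus every neighbour of $H_Q$ lies in $\{x_1^j,x_2^j\}$. Consequently $G-\{x_1^j,x_2^j\}$ has the nontrivial component $H_Q$ and, disjoint from it, a nontrivial component meeting $V(C_1)\cup V(C_2)$ (for instance the one containing the path $C_1-x_1^j$, since $(M\cup P^4)-\{x_1^j,x_2^j\}$ has $8$ vertices and contains edges); so $\{x_1^j,x_2^j\}$ is an essential cut of size $2$, contradicting Claim~\ref{claim: essentially-3-con}. Therefore $|Q|=1$, which finishes the proof.

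I expect the main obstacle to be the case analysis in Stage~1: there are $25$ endpoint pairs to consider, the symmetry group of $M\cup P^4$ (the Petersen graph with one edge removed) has order only $8$, and the two degree-$2$ vertices $x_1^5,x_2^5$ sit asymmetrically relative to the spokes, so the symmetry reductions are limited and a handful of cases genuinely require the parallel-paths dichotomy of Claim~\ref{ClQRExactlyOne} rather than only the clashing-paths obstruction of Lemma~\ref{LeC1C2P1P2}. Stage~2 is comparatively routine, essentially re-running the essential-$3$-connectivity argument already used in the proof of Claim~\ref{ClLengthP1P2P3P4}.
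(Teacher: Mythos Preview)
Your proposal is correct and Stage~1 is essentially the paper's argument, repackaged: the paper also fixes $u_Q$, rules out the far endpoints via clashing with some $P^j$, and then eliminates the near non-aligned endpoints via the parallel-paths dichotomy of Claim~\ref{ClQRExactlyOne}. One small slip: your list ``one of $P^2,P^3,P^4$'' is not quite right---for $u_Q=x_1^3$, $v_Q=x_2^2$ the parallel partner is $P^1$ (the reflection you invoke indeed sends the pair $(P^3,P^4)$ used at $u_Q=x_1^1$ to $(P^1,P^2)$ at $u_Q=x_1^3$), so the correct statement is ``one of $P^1,\dots,P^4$''.

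For Stage~2 the paper simply writes ``through similar analysis to that in Claims~\ref{Cl011vs122}--\ref{ClLengthP1P2P3P4}, we have $|Q|=1$'', i.e.\ it asks the reader to re-run the earlier case analysis with $Q$ in place of a spoke. Your argument is different and cleaner: once Stage~1 forces alignment, any long $Q$ would produce a nontrivial component $H_Q$ of $G-(C_1\cup C_2)$ whose entire neighbourhood is $\{x_1^j,x_2^j\}$ (because any other neighbour would yield a non-aligned path from $C_1$ to $C_2$), giving an essential $2$-cut and contradicting Claim~\ref{claim: essentially-3-con}. This avoids re-running the case analysis and makes transparent why $|Q|=1$; it is a genuine simplification over the paper's hand-wave.
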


\begin{proof}
Let $u_Q,v_Q$ be the origin and terminus of $Q$. By Claim \ref{ClLengthP1P2P3P4}, $Q$ is disjoint with $P^i$ unless $u_Q=x_1^i$ or $v_Q=x_2^i$. 

We first show that $u_Q=x_1^i$ if and only if $v_Q=x_2^i$, $i=1,\ldots,5$. Assume $u_Q=x_1^1$. If $v_Q\in\{x_2^3,x_2^5\}$, then $Q,P^2$ are two clashing paths, a contradiction. If $v_Q=x_2^2$, then $Q,P^4$ are two parallel paths; if $v_Q=x_2^4$, then $Q,P^3$ are two parallel paths; both yield to a contradiction. This shows that if $u_Q=x_1^1$, then $v_Q=x_2^1$. On the other hand, we assume $v_Q=x_2^1$. If $u_Q\neq x_1^1$, then $u_Q$ is mod-non-diagonal with either $x_1^3$ or $x_1^4$, implying that either $Q,P^3$ or $Q,P^4$ are two clashing paths, a contradiction. Thus we conclude that $u_Q=x_1^1$ if and only if $v_Q=x_2^1$. By a similar analysis we can show that $u_Q=x_1^i$ if and only if $v_Q=x_2^i$ for $i=2,3,4$. Furthermore, this implies that $u_Q=x_1^5$ if and only if $v_Q=x_2^5$.

Now let $Q$ be a path from $u_Q=x_1^i$ to $v_Q=x_2^i$, $i=1,\ldots,5$. Through similar analysis to that in Claims \ref{Cl011vs122}-\ref{ClLengthP1P2P3P4}, we have $|Q|=1$, i.e., $Q=x_1^ix_2^i$. It follows that $Q=P^i$ for $i=1,\ldots,4$ and $Q=x_1^5x_2^5$ for $i=5$.
\end{proof}

\begin{claim}\label{ClVG}
$V(G)=V(C_1)\cup V(C_2)$.
\end{claim}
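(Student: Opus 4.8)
The plan is to derive a contradiction from the assumption that some vertex lies outside $V(C_1)\cup V(C_2)$, using only the structural facts already proved. First I would suppose that $V(G)\supsetneq V(C_1)\cup V(C_2)$, so that $G-(C_1\cup C_2)$ has at least one (nonempty) component $H$. Recall that the cycles $C_1,C_2$ were fixed in Claim \ref{claim: adjacent to two cycles} precisely so that every component of $G-(C_1\cup C_2)$ has neighbors in both $C_1$ and $C_2$; hence $H$ has a neighbor $a\in V(C_1)$ and a neighbor $b\in V(C_2)$.

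Next I would exhibit an explicit path from $C_1$ to $C_2$ running through $H$. Choose $a'\in V(H)$ adjacent to $a$ and $b'\in V(H)$ adjacent to $b$ (possibly $a'=b'$ if $H$ is a single vertex), take any $a'$--$b'$ path inside the connected graph $H$, and prepend the edge $aa'$ and append the edge $b'b$. The resulting walk $Q$ is a path whose origin is in $C_1$, whose terminus is in $C_2$, and whose internal vertices all lie in $V(H)\subseteq V(G)\setminus(V(C_1)\cup V(C_2))$; thus $Q$ qualifies as a path from $C_1$ to $C_2$ in the sense used in the paper. Since $a'$ is an internal vertex of $Q$, we have $|Q|\geqslant 2$.

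Then I would invoke Claim \ref{ClPathFromC1toC2}, which asserts that every path from $C_1$ to $C_2$ is one of $P^1,\dots,P^4$ or the edge $x_1^5x_2^5$. By Claim \ref{ClLengthP1P2P3P4} each of $P^1,\dots,P^4$ has length $1$, and $x_1^5x_2^5$ is an edge, so in all cases such a path has length exactly $1$. This contradicts $|Q|\geqslant 2$. Therefore $G-(C_1\cup C_2)$ has no component at all, i.e.\ $V(G)=V(C_1)\cup V(C_2)$.

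I do not anticipate any real obstacle: the substance has been absorbed by the earlier claims, and the only mild point requiring care is verifying that $Q$ is genuinely a path ``from $C_1$ to $C_2$'' (all internal vertices avoiding $V(C_1)\cup V(C_2)$), which is automatic once $Q$ is routed entirely through the single component $H$. One should merely note that the argument also covers the degenerate case where $H$ is a trivial component, since then $Q$ still has length $2$.
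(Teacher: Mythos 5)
Your proof is correct and follows essentially the same route as the paper: take a component $H$ of $G-(C_1\cup C_2)$, use Claim~\ref{claim: adjacent to two cycles} to route a path $Q$ from $C_1$ to $C_2$ through $H$ (so $|Q|\geqslant 2$), and contradict Claim~\ref{ClPathFromC1toC2}, which forces every such path to have length $1$. The paper just states the existence of $Q$ directly rather than spelling out the edge-by-edge construction.
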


\begin{proof}
Suppose that $H$ is a component of $G-C_1\cup C_2$. By the choice of $C_1,C_2$, $H$ has neighbors in both $C_1$ and $C_2$. It follows that there exists a path $Q$ from $C_1$ to $C_2$ with all internal vertices in $H$ and $|Q|\geqslant 2$, contradicting Claim \ref{ClPathFromC1toC2}.
\end{proof}

By Claims \ref{ClLengthC1C2}-\ref{ClVG}, we get that $G$ is either a Petersen graph or a graph obtained from a Petersen graph by removing an edge. This implies that $e(G)\leqslant 15q+\frac{3}{2}r$, as desired.

\section{Extremal graphs and concluding remarks}\label{section: extremal graphs}

Let $G_1=K_1$. For $n\geqslant 2$, we deﬁne the graph $G_n$ as follows: set
\begin{align*}
  n-1   & =9q+r,\ 0\leqslant r\leqslant 8, \mbox{ and}\\
  r     & =2q'+r',\ 0\leqslant r'\leqslant 1.
\end{align*}
Let $G_n$ be a connected graph consisting of $q$ blocks isomorphic to Petersen graphs, $q'$ blocks isomorphic to triangles and $r'$ blocks isomorphic to $K_2$'s. One can see that $G_n$ contains no $(1\bmod 3)$-cycles and $e(G_n)=15q+\lfloor\frac{3r}{2}\rfloor$. We have
$$ex(n,\mathcal{C}_{1\bmod 3})=15q+\left\lfloor\frac{3r}{2}\right\rfloor, \mbox{ where }n-1=9q+r,\ 0\leqslant r\leqslant 8.$$

Notice that the graphs $G_n$ constructed above are extremal graphs for $ex(n,\mathcal{C}_{1\bmod 3})$. Here we give the proof of Theorem \ref{thm: petersen-blocks}, which states that if $9|(n-1)$, then these are the only extremal graphs for $ex(n,\mathcal{C}_{1\bmod 3})$. We remark that in general, there exist extremal graphs other than $G_n$.

\begin{proof}[\bf Proof of Theorem \ref{thm: petersen-blocks}]
The assertion $e(G)\leqslant \frac{5}{3}(n-1)$ can be deduced immediately by Theorem \ref{thm: main}. Suppose now that $e(G)=\frac{5}{3}(n-1)$. By Theorem \ref{thm: main}, $n-1=9q$ and $e(G)=15q$. We shall show that each block of $G$ is a Petersen graph. Let $G$ be a smallest counterexample.

We claim that $G$ is 2-connected. Suppose otherwise that $G$ is the union of two nontrivial graphs $G_1$, $G_2$ with a common vertex $x$. Set $n_i=|V(G_i)|$, and $n_i-1=9q_i+r_i$, $i=1,2$. Thus $n+1=n_1+n_2$ and either $q=q_1+q_2$, $r_1=r_2=0$ or $q=q_1+q_2+1$, $r_1+r_2=9$. By Theorem \ref{thm: main}, $e(G_i)\leqslant 15q_i+\lfloor\frac{3r_i}{2}\rfloor$. Thus
$$e(G)=15q=e(G_1)+e(G_2)\leqslant 15q_1+\left\lfloor\frac{3r_1}{2}\right\rfloor+15q_2+\left\lfloor\frac{3r_2}{2}\right\rfloor\leqslant 15(q_1+q_2)+\left\lfloor\frac{3}{2}(r_1+r_2)\right\rfloor.$$
This only holds when $r_1=r_2=0$ and $q=q_1+q_2$. It follows that each block of $G_i$, $i=1,2$, is a Petersen graph, and thus each block of $G$ is a Petersen graph, a contradiction. Thus as we claimed, $G$ is 2-connected.

Now we claim that $\delta(G)\geqslant 3$. Suppose there is a vertex $x$ with $d_G(x)\leqslant 2$. Let $G'=G-x$. Then $e(G')=e(G)-d_G(x)\geqslant 15q-2=15(q-1)+13$. By Theorem \ref{thm: main}, $G'$ contains a $(1\bmod 3)$-cycle, a contradiction. Thus as we claimed, $\delta(G)\geqslant 3$. Now by Theorem \ref{thm: dean et al.}, $G$ is a Petersen graph, a contradiction.
\end{proof}

\end{spacing}
\end{document}